\numberwithin{equation}{section}
\theoremstyle{definition}
\newtheorem{Thm}[equation]{Theorem}
\newtheorem{Prop}[equation]{Proposition}
\newtheorem{Cor}[equation]{Corollary}
\newtheorem{Lem}[equation]{Lemma}
\newtheorem{Rmk}[equation]{Remark}
\newtheorem*{Que*}{Question}
\def\imod#1{\allowbreak\mkern5mu{\operator@font mod}\,\,#1}
\begin{document}

\title[Half-Integral Weight Modular Forms]{Half-Integral Weight Modular Forms and Modular forms for Weil representations}
\author[Yichao Zhang]{Yichao Zhang}
\address{Department of Mathematics, Harbin Institute of Technology, Harbin, China 150001}
\email{yichao.zhang@hit.edu.cn}
\date{}
\subjclass[2010]{Primary: 11F37, 11F30, 11F27}
\keywords{modular form, half-integral weight, weakly holomorphic, Borcherds lift, Zagier duality.}

\begin{abstract}
We establish an isomorphism between certain complex-valued and vector-valued modular form spaces of half-integral weight, generalizing the well-known isomorphism between modular forms for $\Gamma_0(4)$ with Kohnen's plus condition and modular forms for the Weil representation associated to the discriminant form for the lattice with Gram matrix $(2)$. With such an isomorphism, we prove the Zagier duality and write down the Borcherds lifts explicitly.
\end{abstract}

\maketitle

\section*{Introduction}
\noindent

The theory of modular forms is of fundamental importance in many parts of modern number theory and many other related fields. The weakly holomorphic modular forms, namely those with possible poles at cusps, received less attention than the holomorphic ones. One of the few exceptions is the modular $j$-function, which processes representation-theoretic and arithmetic properties. Things changed when Borcherds, in his seminal papers \cite{borcherds1995automorphic} and \cite{borcherds1998automorphic}, constructed a multiplicative theta lifting, also known as Borcherds automorphic product, which sends weakly holomorphic vector-valued modular forms of full level to modular forms in the form of infinite products for orthogonal groups. The Borcherds' lift is in general a meromorphic modular form in the form of an infinite product, and Borcherds' theory shows precisely the location of its divisors.

Remarkably, in his work on traces of singular moduli, Zagier \cite{zagier1998traces} proved a duality for Fourier coefficients of modular forms weights $k$ and $2-k$ for six small half-integral $k$, with which he gave a different proof of Borcherds's theorem for $\Gamma_0(4)$. Such duality is now known as Zagier duality and Zagier dualities for various types of modular forms, of integral or half-integral weight, have been proved since then (see \cite{zhang2015zagier} for a list of reference on this research).

Many important works have been built on Borcherds lifts by directly employing vector-valuded modular forms ever since. However, the full-level and vector-valued condition is not convenient to work with. To overcome such difficulty, in the case of integral weights, Bruinier and Bundschuh \cite{bruinier2003borcherds} constructed an isomorphism between prime-level complex-valued modular forms and full-level vector-valued modular forms. Such an isomorphism proves to be useful. With such an isomorphism, Bruinier \cite{bruinier2014converse} proved a converse theorem for Borcherds' lift, Bruinier and Yang \cite{bruinier2007twisted} constructed twisted Borcherds products, Rouse \cite{rouse2006zagier} and Choi \cite{choi2006simple} proved the Zagier duality for level $5,13,17$, and Kim and Lee \cite{kim2012rank} provided automorphic corrections to some rank two hyperbolic Kac-Moody algebras. Later, following the work of Scheithauer (\cite{scheithauer2009weil} and \cite{scheithauer2011some}), the author generalized Bruinier and Bundschuh's isomorphism, proved the Zagier duality for general level, and with Kim and Lee, provided more automorphic corrections(see \cite{zhang2015isomorphism}, \cite{zhang2015zagier}, \cite{kim2015weakly}).

On the other hand, it has been known for a long time that the half-integral weight modular forms for $\Gamma_0(4)$ can be sent to full-level vector-valued modular forms with two components, which can be made into an isomorphism if we choose the Kohnen's plus space (see \cite{eichler1985theory} for example). With such an isomorphism, Borcherds' theorem in \cite{borcherds1995automorphic} is merely a special case of his automorphic product theorem in \cite{borcherds1998automorphic}. One expects that such an isomorphism can be constructed as in the case of half-integral weight, which is the main purpose of this paper. 

The construction and the proof are similar to the integral-weight case and we need the concrete formula for the matrix coefficients of the Weil representations obtained by Str\"omberg \cite{stromberg2013weil}. We actually work a little more generally and treat \emph{transitive} discriminant forms. For example, the $p$-component $D_p$ of the discriminant form $D$ can be $(\mathbb Z/p\mathbb Z)^2$, or more precisely $D_p=p^{\pm 2}$ with $\pm =-\left(\frac{-1}{p}\right)$. We shall mainly consider the case when the level of $D$ is $N=4M$ with $M$ odd and square-free. The Atkin-Lehner operators $Y(p)$ and $Y(4)$ decompose the space of modular forms into common eigenspaces: the Kohnen's plus condition corresponds to an eigenvalue of $Y(4)$ as expected; for a component $p^{\pm 2}$, the eigenvalue $-1$ is chosen; for a component $p^{\pm 1}$, the eigenvalues give the sign vector condition. Such treatment shows some similarity between $Y(4)$ and $Y(p)$ with $\chi_p\neq 1$. With the isomorphism established, we see that for components $p^{\pm 2}$, the eigenvalue $-1$ gives the \emph{new} space in the sense that it is mapped to full-level (vector-valued) modular forms. This provides a characterization of the eigenspaces of the $Y(p)$ operator when $\chi_p\neq 1$, which is missing from the literature (see \cite{ueda1993twisting}). Since we consider weakly holomorphic modular forms and the Petersson inner product is no longer available, this sheds light on what forms are new as the Kohnen's plus condition does. Similar result should also hold and be easier for the integral-weight case.

After establishing the isomorphism, we move on to prove the corresponding Zagier duality and write down the Borcherds theorem in the case of $\textrm{O}(2,1)$ explicitly. For simplicity, we shall assume that the components $p^{\pm 2}$ do not appear. By introducing \emph{reduced modular forms} and employing the obstruction theorem of Borcherds \cite{borcherds1999gross} as we did in \cite{zhang2015zagier}, we can prove the Zagier duality (Theorem \ref{R-Thm-1}) without much effort. For the Borcherds' theorem (Theorem \ref{Thm2}), most of the parts are straightforward except the computation of the Weyl vector. By checking through the proofs, we note that our isomorphism above can be applied to meromorphic or real analytic modular forms, hence in particular to Zagier's non-holomorphic modular form $\mathbf G$ of weight $3/2$. The corresponding vector-valued modular forms $\mathbf G_N$ for $\mathbf G$ satisfies Lemma 9.5 and Corollary 9.6 of \cite{borcherds1998automorphic}, from which the explicit formula for the Weyl vector follows.

Here is the layout of this paper: after providing the basics and fixing the notations in Section 1, we consider and classify transitive discriminant forms in Section 2. In Section 3, we briefly cover the Atkin-Lehner operators and the corresponding eigenspaces. In Section 4, we assume that $D_2=2^+_{\pm 1}$ and describe the $\epsilon$-condition that is needed for the isomorphism. We then establish the isomorphism in Section 5. In Section 6, we assume further that $D_p=p^{\pm 1}$ for all odd $p\mid N$, prove Zagier duality and translate Borcherds' theorem. Finally we construct some examples in the last section.

%\subsection*{Acknowledgments}

\section{Preliminaries}\label{Preliminaries}
\noindent
We recall the basics on discriminant forms, modular forms of half-integral weight and modular forms for Weil representations. For more details on discriminant forms, one may consult \cite{conway1998sphere}, \cite{nikulin1980integral}, or \cite{scheithauer2009weil}.

\subsection{Discriminant forms}
A discriminant form is a finite abelian group $D$ with a quadratic form $q: D\rightarrow \mathbb Q/\mathbb Z$, such that the symmetric bilinear form defined by $(\beta,\gamma)=q(\beta+\gamma)-q(\beta)-q(\gamma)$ is nondegenerate, namely, the map $D\rightarrow \text{Hom}(D,\mathbb Q/\mathbb Z)$ defined by $\gamma\mapsto (\gamma,\cdot)$ is an isomorphism. We shall also write $q(\gamma)=\frac{\gamma^2}{2}$. We define the level of a discriminant form $D$ to be the smallest positive integer $N$ such that $Nq(\gamma)=0$ for each $\gamma\in D$. It is well-known that if $L$ is an even lattice then $L'/L$ is a discriminant form, where $L'$ is the dual lattice of $L$. Conversely, any discriminant form can be obtained this way. With this, the signature $\text{sign}(D)\in \mathbb Z/8\mathbb Z$ is defined to be the signature of $L$ modulo $8$ for any even lattice $L$ such that $L'/L=D$.

Every discriminant form can be decomposed into a direct sum of Jordan $p$-components for primes $p$ and each Jordan $p$-component can be written as a direct sum of indecomposible Jordan $q$-components with $q$ powers of $p$. Such decompositions are not unique in general. To fix our notations, we recall the possible indecomposible Jordan $q$-components as follows.

Let $p$ be an odd prime and $q>1$ be a power of $p$. The indecomposible Jordan components with exponent $q$ are denoted by $q^{\delta_q}$ with $\delta_q=\pm 1$; it is a cyclic group of order $q$ with a generator $\gamma$, such that $q(\gamma)=\frac{a}{q}$ and $\delta_q=\left(\frac{2a}{p}\right)$. These discriminant forms both have level $q$.

If $q>1$ is a power of $2$, there are also precisely two indecomposable \emph{even} Jordan components of exponent $q$, denoted $q^{\delta_q 2}$ with $\delta_q=\pm 1$; it is a direct sum of two cyclic groups of order $q$, generated by two generators $\gamma$, $\gamma'$, such that if $\delta_q=1$, we have
\[q(\gamma)=q(\gamma')=0, \quad (\gamma,\gamma')=\frac{1}{q},\] and if $\delta_q=-1$, we have
\[q(\gamma)=q(\gamma')=\frac{1}{q},\quad (\gamma,\gamma')=\frac{1}{q}.\]
Such components have level $q$. There are also \emph{odd} indecomposable Jordan components in this case, denoted by $q_t^{\pm 1}$ with $\pm 1=\left(\frac{2}{t}\right)$ for each $t\in \left(\mathbb Z/8\mathbb Z\right)^\times$. Explicitly, $q^{\pm 1}_t$ is a cyclic group of order $q$ with a generator $\gamma$ such that $q(\gamma)=\frac{t}{2q}$. Clearly, these discriminant forms have level $2q$.

To give a finite direct sum of indecomposable Jordan components of the same exponent $q$, we multiply the signs, add the ranks, and add all subscripts $t$ ($t=0$ if there is no subscript). So in general, the $q$-component of a discriminant form is given by $q_t^{\delta_q n}$ ($t=0$ if $q$ is odd or the form is even). Set $k=k(q^{\delta_q n}_t)=1$ if $q$ is not a square and $\delta_q=-1$, and $0$ otherwise. If $q$ is odd, then define $p$-excess$(q^{\pm n})=n(q-1)+4k\imod 8$, and if $q$ is even, then define oddity$(q^{\pm n}_t)=2$-excess$(q^{\pm n}_t)=t+4k\imod 8$.

Let $D$ be a discriminant form and assume that $D$ has a Jordan decomposition $D=\oplus_q q^{\delta_q n_q}_t$ where the sum is over distinct prime powers $q$. Then
\[p\text{-excess}(D)=\sum_{q: p\mid q} p\text{-excess}(q^{\delta_q n_q}_t).\]
We recall the well-known \emph{oddity formula}:
\[\text{sign}(D)+\sum_{p>2}p\text{-excess}(D)=\text{oddity}(D)\imod 8.\]

\subsection{Metaplectic covers}
Throughout this note unless otherwise stated, $k\in \frac{1}{2}+\mathbb Z$ and $\mathbb H$ denotes the upper half plane. Let $\textrm{Mp}_2^+(\mathbb R)$ be the metaplectic cover of $\textrm{GL}_2^+(\mathbb R)$ (see \cite[Section 1]{shimura1973modular}), so a typical element in $\textrm{Mp}_2^+(\mathbb R)$ is of the form $(A,\phi)$ where $\phi$ is a holomorphic function on $\mathbb H$ and
\[A=\begin{pmatrix}
a&b\\c&d
\end{pmatrix}\in \textrm{GL}_2^+(\mathbb R),\quad  \phi(\tau)=tj(A,\tau),\text{ for some } t\in\mathbb C, |t|=1.\] Here $j(A,\tau)=\textrm{det}(A)^{-\frac{1}{4}}(c\tau+d)^\frac{1}{2}$ is the square-root of the usual one for the integral-weight case.
The group multiplication is defined by
\[(A,\phi)(B,\psi):=(AB,\phi(B\tau)\psi(\tau)),\quad (A,\phi), (B,\psi)\in \textrm{Mp}_2^+(\mathbb R).\] We follow \cite{shimura1973modular} and introduce the multiplier system $\nu$ on $\Gamma_0(4)$:
\[\nu(A)=\left(\frac{c}{d}\right)\varepsilon_d^{-1},\quad A=\begin{pmatrix}
a&b\\c&d
\end{pmatrix}\in\Gamma_0(4),\]
where $\left(\frac{c}{d}\right)$ is the Kronecker symbol. We recall that if $d$ is odd and positive, this is the usual Jacobi symbol; $\left(\frac{c}{d}\right)=0$ if $\textrm{gcd}(c,d)>1$; $\left(\frac{c}{d}\right)=\textrm{sign}(c)\left(\frac{c}{-d}\right)$ if $cd\neq 0$; $\left(\frac{2}{d}\right)=\left(\frac{d}{2}\right)$ and $\left(\frac{0}{\pm 1}\right)=\left(\frac{\pm 1}{0}\right)=1$. These conditions determine the symbol by complete multiplicativity in $d$. Note that \[\overline{\nu}(A)=\nu^3(A)=\left(\frac{-1}{d}\right)\nu(A),\quad \nu(A)\nu(A^{-1})=1,\quad A\in\Gamma_0(4).\] For any $A\in\textrm{GL}_2^+(\mathbb R)$, we will denote
\[\tilde A=(A,j(A,\tau))\in\textrm{Mp}_2^+(\mathbb R).\]
Moreover, if
$A\in\Gamma_0(4)$, we denote \[A^*=(A,\nu(A)j(A,\tau)).\]
It is well-known that $A\mapsto A^*$ gives an injective homomorphism and we denote its image by $\Gamma^*_0(4)$. The image of $\Gamma_0(N)$ for $4\mid N$ will be denoted by $\Gamma^*_0(N)$.

Let $\textrm{Mp}_2(\mathbb Z)$ be the metaplectic double cover of $\textrm{SL}_2(\mathbb Z)$ inside $\textrm{Mp}_2^+(\mathbb R)$, consisting of pairs $(A,\phi)$ with $A=\begin{pmatrix}
a&b\\c&d
\end{pmatrix}\in \textrm{SL}_2(\mathbb Z)$  and  $\phi^2=c\tau +d$. Let $S$ and $T$ denote the standard generators of $\textrm{SL}_2(\mathbb Z)$, so in $\text{Mp}_2(\mathbb Z)$,
\[
\tilde S=\left(\begin{pmatrix}
0&-1\\
1&0
\end{pmatrix},\sqrt{\tau}\right),\quad \tilde T=\left(\begin{pmatrix}
1&1\\
0&1
\end{pmatrix},1\right),\]
generate $\text{Mp}_2(\mathbb Z)$.
We shall also need
\[Z:=\widetilde{-I}=\left(\begin{pmatrix}
-1&0\\
0&-1
\end{pmatrix},i\right),\]
and we have $Z^4=\tilde I$ and $\tilde S^2=Z$.

\subsection{Modular forms}
Let $(A,\phi)\in\text{Mp}_2^+(\mathbb R)$ and $f$ be a function on $\mathbb H$. The weight-$k$ slash operator is defined by
\[\left(f|_k(A,\phi)\right)(\tau)=\phi^{-2k}(\tau)f(A\tau), \quad A=\begin{pmatrix}a&b\\c&d\end{pmatrix}.\]
We shall usually drop the weight $k$ from this notation.

Given any discriminant form $D$, let $r$ denote the signature of $D$ and Let $\{\mathfrak e_\gamma: \gamma\in D\}$ be the standard basis of the group algebra $\mathbb C[D]$. The Weil representation $\rho_D$ attached to $D$ is a unitary representation of $\text{Mp}_2(\mathbb Z)$ on $\mathbb C[D]$ such that
\begin{eqnarray*}
\rho_D(\tilde T)\mathfrak e_\gamma&=&e(q(\gamma))\mathfrak e_\gamma,\\
\rho_D(\tilde S)\mathfrak e_\gamma&=&\frac{i^{-\frac{r}{2}}}{\sqrt{|D|}}\sum_{\beta\in D}e(-(\beta,\gamma))\mathfrak e_\beta,
\end{eqnarray*}
where $e(x)=e^{2\pi i x}$ and $|D|$ is the order of $D$. In particular, we have $\rho_D(Z)\mathfrak e_\gamma=i^{-r}\mathfrak e_{-\gamma}$. For convenience, we shall also denote $e_m(x)=e^{\frac{2\pi ix}{m}}$.

Denote by $\text{Aut}(D)$ the automorphism group of $D$, that is, the group of group automorphisms of $D$ that preserve the norm (or the quadratic form). The action of elements in $\text{Aut}(D)$ and that of $\rho_D$ commute on $\mathbb C[D]$. We caution here that our $\rho_D$ is the same as that in \cite{borcherds1998automorphic} and \cite{bruinier2003borcherds}, but conjugate to the one used in \cite{scheithauer2009weil} and \cite{scheithauer2011some}.

We denote by $\mathcal A(k,\rho_D)$ the space of functions $F=\sum_{\gamma\in D}F_\gamma \mathfrak e_\gamma$ on $\mathbb H$, valued in $\mathbb C[D]$, such that
\begin{itemize}
\item $F|A:=\sum_\gamma F_\gamma |A \mathfrak e_\gamma=\rho_D(A) F$ for all $A\in\text{Mp}_2(\mathbb Z)$,
\item $F$ is holomorphic on $\mathbb H$ and meromorphic at $\infty$; namely, for each $\gamma\in D$, $F_\gamma$ is holomorphic on $\mathbb H$ and has Fourier expansion at $\infty$ with at most finitely many negative power terms.
\end{itemize}
More explicitly, if $F=\sum_\gamma F_\gamma\in \mathcal A(k,\rho_D)$, then
\[F_\gamma(\tau)=\sum_{n\in q(\gamma)+\mathbb Z, n\gg -\infty}a(\gamma,n)q^n.\] Denote by $\mathcal M(k,\rho_D)$ and $\mathcal S(k,\rho_D)$ the subspace of holomorphic modular forms and the subspace of cusp forms, respectively. Because the action of $\text{Aut}(D)$ and that of $\rho_D$ commute on $\mathbb C[D]$, the vector-valued modular forms will be invariant under $\textrm{Aut}(D)$.
We define $\mathcal A^{\text{inv}}(k,\rho_D)$ to be the subspace of functions that are invariant under $\text{Aut}(D)$. By the action of $Z$ and $F_\gamma=F_{-\gamma}$ for $F\in\mathcal A(k,\rho_D)$, we must have $2k\equiv r\imod 4$, since $\gamma\mapsto -\gamma$ defines an element in $\text{Aut}(D)$. Therefore, we shall always assume that $2k\equiv r\imod 4$. Similarly, we define $\mathcal M^{\text{inv}}(k,\rho_D)$ and $\mathcal S^\text{inv}(k,\rho_D)$.

For each Dirichlet character $\chi$ of modulo $N$, the space $\mathrm{A}(N,k,\chi)$ consists of holomorphic functions $f$ on $\mathbb H$ such that $f|A^*=\chi(A)f$ for each $A\in\Gamma_0(N)$ and $f$ is meromorphic at cusps. By considering $A=-I$, we see that for the space to be nonzero, we necessarily have $\chi(-1)=1$. The subspace of holomorphic forms and that of cuspforms are denoted by $\mathrm{M}(N,k,\chi)$ and $\mathrm{S}(N,k,\chi)$ respectively.

Each discriminant form $D$ can be decomposed uniquely into $p$-components  $D=\oplus_pD_p$ and each Dirichlet character $\chi$ can also be decomposed uniquely into $p$-components $\chi=\prod_p\chi_p$. For each positive integer $m$, we shall denote $D_m=\oplus_{p\mid m}D_p$ and $\chi_m=\prod_{p\mid m}\chi_p$ for convenience.

\section{Transitive Discriminant Forms}

\noindent
We call a discriminant form $D$ \emph{transitive} if for any $n\in\mathbb Q/\mathbb Z$, the action of $\textrm{Aut}(D)$ is transitive on the subset of elements of norm $n$.
In this section, we classify transitive discriminant forms in general and prove some of their properties.

It is trivial that if $D$ is transitive then $D$ is anisotropic. We classify transitive discriminant forms in the following proposition.
\begin{Prop}\label{T-Prop-1}
A discriminant form $D$ is transitive if and only if $D=\oplus_pD_p$ such that
\begin{itemize}
	\item for an odd prime $p$, $D_p$ is either trivial or equal to $p^{\pm 1}$, or $D_p=p^{+2}$ when $p\equiv 3\imod 4$, or $D_p=p^{-2}$ when $p\equiv 1\imod 4$;
	\item $D_2$ is either trivial or equal to one of the following:
	\[2^{+3}_{\pm 3},\quad 2^{+2}_{\pm 2},\quad 2^{+1}_{\pm 1},\quad 2^{-2}, \quad 4^{\pm 1}_t,\quad 4^{\pm 1}_t\oplus 2^{+1}_{+1}.\]
\end{itemize}
\end{Prop}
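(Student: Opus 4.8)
The plan is to reduce the classification to the individual $p$-components, observe that transitivity forces the form to be anisotropic, and then run through the (short, classical) list of anisotropic discriminant forms checking transitivity one family at a time.

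\textbf{Reduction to $p$-components.} Since $D_p$ is the $p$-torsion subgroup of $D$, it is preserved by every automorphism, and conversely any family of automorphisms of the $D_p$ glues; hence $\mathrm{Aut}(D)=\prod_p\mathrm{Aut}(D_p)$. For $\gamma=(\gamma_p)_p\in D=\oplus_pD_p$ we have $q(\gamma)=\sum_pq(\gamma_p)$ with $q(\gamma_p)$ lying in the $p$-primary part of $\mathbb Q/\mathbb Z$, so $q(\gamma)=q(\gamma')$ if and only if $q(\gamma_p)=q(\gamma_p')$ for all $p$. Thus the $\mathrm{Aut}(D)$-orbit of $\gamma$ is the product of the $\mathrm{Aut}(D_p)$-orbits of the $\gamma_p$, and the set of norm-$n$ elements of $D$ is the product of the sets of norm-$n_p$ elements of the $D_p$. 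Letting $n$ vary, $D$ is transitive if and only if every $D_p$ is transitive. Moreover, if some $D_p$ has a nonzero isotropic vector $\gamma_0$, then $0$ and $\gamma_0$ both have norm $0$, but no automorphism moves $0$, so $D$ is not transitive; hence a transitive discriminant form is anisotropic, and it suffices to single out the transitive forms among the anisotropic $D_p$.

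\textbf{The anisotropic list.} Recall (see \cite{conway1998sphere} or \cite{nikulin1980integral}) that the anisotropic $p$-adic discriminant forms are exactly: for $p$ odd, the trivial form, $p^{\pm1}$, and the unique anisotropic binary form $p^{\delta2}$ with $\delta=-\left(\frac{-1}{p}\right)$ (that is, $p^{+2}$ when $p\equiv3\imod 4$ and $p^{-2}$ when $p\equiv1\imod 4$); and for $p=2$, after normalizing the Jordan symbol by the standard equivalences (oddity fusion and sign walking), the trivial form together with $2^{+3}_{\pm3}$, $2^{+2}_{\pm2}$, $2^{+1}_{\pm1}$, $2^{-2}$, $4^{\pm1}_t$, and $4^{\pm1}_t\oplus2^{+1}_{+1}$. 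If one prefers, this short list can be produced directly: a cyclic $\mathbb Z/p^k$ carries an anisotropic form only for $k\le1$ ($p$ odd) or $k\le2$ ($p=2$), a form of rank $\ge3$ over $\mathbb F_p$ is isotropic, and the remaining $2$-adic combinations are settled by inspection. In any case this gives the ``only if'' direction, and it remains to check that every form on the list is transitive.

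\textbf{Transitivity of the listed forms.} The trivial form is transitive. For $p^{\pm1}$ ($p$ odd), $\mathrm{Aut}=\{\pm1\}$ and a nonzero value of $q$ is attained by exactly two elements $\pm k\gamma$, which form a single orbit. For $p^{\delta2}$ ($p$ odd), $\mathrm{Aut}(p^{\delta2})$ is the orthogonal group of an anisotropic binary quadratic space over $\mathbb F_p$; Witt's extension theorem gives a single orbit on each nonzero norm level, and norm $0$ consists of $\{0\}$ only. For the cyclic $2$-adic forms $2^{+1}_{\pm1}$ and $4^{\pm1}_t$, $\mathrm{Aut}$ contains $-1$, each norm level has at most two elements, and a two-element level is a pair $\{\pm v\}$. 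For $2^{-2}$ all three nonzero elements have norm $\tfrac12$, so every automorphism of the underlying group $(\mathbb Z/2)^2$ preserves $q$ and $\mathrm{Aut}(2^{-2})=\mathrm{GL}_2(\mathbb F_2)$ acts as the full symmetric group on them. Finally, for $2^{+2}_{\pm2}$, $2^{+3}_{\pm3}$ and $4^{\pm1}_t\oplus2^{+1}_{+1}$, one checks the (very few) norm levels directly using the explicit automorphisms --- permutations of the $\{\pm1\}$-normed generators in the diagonal cases, together with $-1$ --- or the matrix coefficients in \cite{stromberg2013weil}. Combined with the reduction step, this proves the proposition.

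\textbf{Main obstacle.} The odd-$p$ case is immediate from Witt's theorem. The genuine work is entirely at $p=2$: pinning down the anisotropic list requires using the non-uniqueness of Jordan decompositions, so that a priori many symbols collapse onto the six families above, and the transitivity checks for the non-cyclic $2$-adic pieces cannot be read off from Witt's theorem because of the characteristic-$2$ behaviour of the associated bilinear form; there one computes $\mathrm{Aut}(D_2)$ (or invokes the explicit formulas of \cite{stromberg2013weil}) by hand. Everything else is formal.
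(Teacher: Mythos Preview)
Your proof is correct and follows essentially the same route as the paper: reduce to $p$-components via $\mathrm{Aut}(D)=\prod_p\mathrm{Aut}(D_p)$, use that transitivity forces anisotropy, handle odd $p$ by Witt's theorem on quadratic spaces over $\mathbb F_p$, and dispose of the $2$-adic cases by explicit inspection. The only organizational difference is that you phrase the ``only if'' direction as ``transitive $\Rightarrow$ anisotropic, and here is the known list of anisotropic $p$-adic discriminant forms,'' invoking \cite{conway1998sphere}/\cite{nikulin1980integral}, whereas the paper derives the rank and sign constraints directly from \cite{serre1973course}; both reach the same list and both leave the $2$-adic transitivity checks as explicit computation. (One small quibble: your pointer to \cite{stromberg2013weil} for the $2$-adic automorphism checks is misplaced --- Str\"omberg computes Weil-representation matrix coefficients, not $\mathrm{Aut}(D_2)$ --- but the hand computation you describe suffices on its own.)
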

\begin{proof}
We first show that if $D=\oplus_pD_p$ is transitive, then any indecomposable component is equal to either $p^{\pm 1}$ for some odd prime $p$ or one of the following: $2^{+1}_{\pm 1}$, $2^{-2}$, $4^{\pm 1}_t$. If $p$ is an odd prime and $q=p^f$ with $f\geq 2$ and assume that $q^{\delta_q}$ appears in $D$ as an indecomposable component. Then consider the element $\gamma=p^{f-1}+q\mathbb Z\in \mathbb Z/q\mathbb Z$. It is easy to see that $q(\gamma)=0$, so $D$ is isotropic and $D$ is not transitive. The claim on the $2$-components follows similarly.

We then show that if $D=\oplus_pD_p$ is transitive, then $D_p$ has the form in the statement. We have just seen that for an odd prime $p$, $D_p=p^{\delta_pn_p}$ for some $\delta_p\in\{\pm 1\}$ and $n_p\geq 0$. Therefore, $D_p$ is a quadratic space over $\mathbb F_p$. If $n_p\geq 3$, by \cite[Chapter I, Theorem 3, Corollary 2]{serre1973course}, $D_p$ is isotropic. If $n_p=2$ and $p\equiv 1\imod 4$, then $-1\imod p$ is a square. By \cite[Chapter IV, Proposition 3', Corollary 2]{serre1973course}, since $D$ is anisotropic, the two indecomposable components represent different elements. So the two components have different signs and $\delta_p=-1$. The case when $p\equiv -1\imod 4$ follows in the same way. When $p=2$, with some concrete computation, we can also proceed similarly.

Finally we show that such $D$ in the statement is transitive. Since $\textrm{Aut}(D)=\bigoplus_{p}\textrm{Aut}(D_p)$, we only have to prove that each $D_p$ is transitive. When $D_p=p^{\delta n_p}$ for an odd $p$, since $D_p$ is anisotropic, any two nonzero elements $\gamma,\gamma'$ with the same norm generate two non-degenerate one-dimensional $\mathbb F_p$-subspaces, so $\gamma\mapsto \gamma'$ extends to an element in $\textrm{Aut}(D_p)$ by Witt's theorem. This shows that such $D_p$ is transitive. That those $D_2$ in question are also transitive follows from explicit computation.
\end{proof}

\begin{Lem}\label{T-Lem-5}
	Let $p$ be an odd prime and $D=p^{\pm 2}$ be transitive. Then $D$ represents all elements in $\frac{1}{p}\mathbb Z/\mathbb Z-\{0\}$.
\end{Lem}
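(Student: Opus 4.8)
The plan is to reinterpret $D=p^{\pm2}$ as a nondegenerate quadratic space of dimension $2$ over $\mathbb{F}_p$ and then to invoke the classical fact that such a space is universal on $\mathbb{F}_p^\times$. Since $p$ is odd, reduction modulo $1$ gives a group isomorphism $\tfrac1p\mathbb{Z}/\mathbb{Z}\xrightarrow{\sim}\mathbb{F}_p$, $\tfrac ap\mapsto a$, and under it the norm $q(\gamma)=\tfrac{\gamma^2}{2}$ becomes a genuine $\mathbb{F}_p$-valued quadratic form $Q$ on $D$ whose associated bilinear form is the scalar multiple by $2$ of $(\cdot,\cdot)$, hence nondegenerate. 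So the lemma is precisely the statement that $Q$ represents every element of $\mathbb{F}_p^\times$.

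First I would record the short elementary proof of this, to keep the section self-contained. Diagonalize $Q\simeq aX^2+bY^2$ with $a,b\in\mathbb{F}_p^\times$, and fix $c\in\mathbb{F}_p^\times$. The set $\{aX^2:X\in\mathbb{F}_p\}$ has exactly $\tfrac{p+1}{2}$ elements, and so does $\{c-bY^2:Y\in\mathbb{F}_p\}$; since $\tfrac{p+1}{2}+\tfrac{p+1}{2}>p$, these two subsets of $\mathbb{F}_p$ must intersect, which produces $X,Y\in\mathbb{F}_p$ with $aX^2+bY^2=c$. (One may alternatively just cite \cite[Chapter IV]{serre1973course}.) Translating back through the isomorphism, for each nonzero $n\in\tfrac1p\mathbb{Z}/\mathbb{Z}$ there is $\gamma\in D$ with $q(\gamma)=n$, which is the assertion.

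I would also point out that the transitivity hypothesis — equivalently, by Proposition \ref{T-Prop-1}, anisotropy of $D$ together with the congruence $p\equiv 3$ or $1\imod 4$ — allows a more structural argument: an anisotropic binary quadratic space over $\mathbb{F}_p$ is a scalar multiple of the norm form of $\mathbb{F}_{p^2}/\mathbb{F}_p$, so $D\cong(\mathbb{F}_{p^2},\lambda N_{\mathbb{F}_{p^2}/\mathbb{F}_p})$ for some $\lambda\in\mathbb{F}_p^\times$; the norm map $N_{\mathbb{F}_{p^2}/\mathbb{F}_p}$ is $z\mapsto z^{p+1}$ on cyclic groups of orders $p^2-1$ and $p-1$, hence has image all of $\mathbb{F}_p^\times$, and therefore $Q$ attains every value in $\mathbb{F}_p^\times$.

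The only point needing a little care is the passage between the $\mathbb{Q}/\mathbb{Z}$-valued form $q$ and the $\mathbb{F}_p$-valued form $Q$, in particular the factor of $2$ relating $q(\gamma)$ to $\gamma^2/2$ and to $(\cdot,\cdot)$; since $p$ is odd this is harmless. Beyond that bookkeeping there is no real obstacle: the content of the lemma is just the universality of nondegenerate binary quadratic forms over finite fields of odd characteristic.
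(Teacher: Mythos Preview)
Your proof is correct. The reduction to a nondegenerate binary quadratic space over $\mathbb{F}_p$ is exactly what the paper does, but from there the two arguments diverge slightly. The paper adjoins a third variable, forming $q'(x,y,z)=q(x,y)-\alpha z^2$, cites \cite[Chapter I, Theorem 3, Corollary 2]{serre1973course} to say that any ternary form over $\mathbb{F}_p$ is isotropic, and then uses anisotropy of $q$ (i.e.\ transitivity of $D$) to conclude that any isotropic vector of $q'$ has $z\neq 0$, whence $q$ represents $\alpha$. Your pigeonhole argument on the two subsets of size $\tfrac{p+1}{2}$ is the standard direct proof of the same universality fact and is slightly sharper in that it does not invoke anisotropy at all; your second argument via the norm form of $\mathbb{F}_{p^2}/\mathbb{F}_p$ is a genuinely different structural route that does use anisotropy. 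One minor quibble: your parenthetical citation should point to Chapter I of Serre (finite fields), not Chapter IV.
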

\begin{proof}
By replacing $\frac{1}{p}\mathbb Z/\mathbb Z$ with $\mathbb Z/p\mathbb Z$, $D$ is a quadratic form over $\mathbb F_p$. To prove that $D$ represents $\left(\mathbb Z/p\mathbb Z\right)^\times$, for any $\alpha\in \left(\mathbb Z/p\mathbb Z\right)^\times$, we consider the quadratic form  $q'(x,y,z)=q(x,y)\oplus -\alpha z^2$. By \cite[Chapter I, Theorem 3, Corollary 2]{serre1973course}, $q'$ is universal, hence represents $0$. Since $q$ does not represents $0$, it follows that $q$ represents $\alpha$.
\end{proof}

The following lemma is crucial in proving our isomorphism later and we prove it in a similar way as in \cite{zhang2015isomorphism} with minor modifications.

\begin{Lem}\label{T-Lem-4}
Let $D$ be transitive and for a fixed modular form $F=\sum_{\gamma}F_\gamma\mathfrak e_\gamma\in\mathcal A^\text{inv}(k,\rho_D)$, let $W=\textrm{span}_\mathbb{C}\left\{F_\gamma\colon \gamma\in D\right\}$. Then

(1) $W=\textrm{span}_\mathbb{C}\left\{F_0|A\colon A\in \textrm{Mp}_2(\mathbb Z)\right\}$.
In particular, if $F_0=0$, then $F=0$.

(2) If $f=\sum_{\gamma\in D}a_\gamma F_\gamma$ is $T$-invariant, then $f=a_0F_0$.
\end{Lem}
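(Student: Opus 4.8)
The plan is to leverage the $\textrm{Aut}(D)$-invariance of $F$ together with the transitivity of $D$ to turn a statement about $\mathbb C[D]$-valued objects into one about scalar functions indexed by the norms $q(\gamma)\in\mathbb Q/\mathbb Z$. Two preliminary observations set this up. Since $\sigma F=F$ for every $\sigma\in\textrm{Aut}(D)$ and the $\textrm{Aut}(D)$-orbit of $\gamma$ is determined by $q(\gamma)$ by transitivity, the component $F_\gamma$ depends only on $q(\gamma)$; write $F_\gamma=G_{q(\gamma)}$, so that $W=\textrm{span}_\mathbb C\{G_n\colon n\text{ a norm represented by }D\}$. Next, expanding $F|A=\rho_D(A)F$ componentwise gives $F_\beta|A=\sum_\gamma\rho_{\beta\gamma}(A)F_\gamma$, where $\rho_{\beta\gamma}(A)$ is the matrix coefficient of $\rho_D(A)$ in the standard basis; in particular $F_0|A\in W$ for every $A$, which already gives the inclusion $\textrm{span}_\mathbb C\{F_0|A\}\subseteq W$.

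For the reverse inclusion I would evaluate $F_0$ against the elements $\tilde S\tilde T^j$. From $\rho_D(\tilde S)\mathfrak e_\gamma=\frac{i^{-r/2}}{\sqrt{|D|}}\sum_\beta e(-(\beta,\gamma))\mathfrak e_\beta$ one has $\rho_{0\gamma}(\tilde S)=i^{-r/2}/\sqrt{|D|}$ for all $\gamma$, so $F_0|\tilde S$ is a fixed nonzero multiple of $\sum_\gamma F_\gamma$; applying $|\tilde T^j$ and using $F_\gamma|\tilde T^j=e(jq(\gamma))F_\gamma$ yields
\[
F_0|(\tilde S\tilde T^j)=\frac{i^{-r/2}}{\sqrt{|D|}}\sum_n e(jn)\,N_n\,G_n,\qquad N_n:=\#\{\gamma\in D\colon q(\gamma)=n\},
\]
where the sum is over the finitely many $n\in\frac1N\mathbb Z/\mathbb Z$ with $N_n>0$ and $N$ is the level of $D$. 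Letting $j$ range over $\{0,1,\dots,s-1\}$, with $s$ the number of distinct such $n$, the matrix $(e(jn))$ is Vandermonde in the distinct roots of unity $e(n)$, hence invertible; so each $N_nG_n$ — and hence each $G_n$, since $N_n\neq0$ — is a $\mathbb C$-linear combination of the functions $F_0|(\tilde S\tilde T^j)$. Thus $W\subseteq\textrm{span}_\mathbb C\{F_0|A\}$, which proves (1); and if $F_0=0$ then every $F_0|A=0$, so $W=0$ and $F=0$.

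For (2) I would first note that the nonzero $G_n$ are linearly independent: the Fourier expansion of $G_n$ at $\infty$ only involves exponents in $n+\mathbb Z$, so distinct classes $n\in\mathbb Q/\mathbb Z$ have disjoint Fourier support. Writing $f=\sum_\gamma a_\gamma F_\gamma=\sum_n b_nG_n$ with $b_n=\sum_{q(\gamma)=n}a_\gamma$, we have $f|\tilde T=\sum_n e(n)b_nG_n$, so the hypothesis $f|\tilde T=f$ becomes $\sum_n(e(n)-1)b_nG_n=0$, forcing $b_n=0$ for every $n\neq0$ with $G_n\neq0$; hence $f=b_0G_0$. Finally, a transitive $D$ is anisotropic, so $q(\gamma)=0$ implies $\gamma=0$, whence $b_0=a_0$ and $f=a_0F_0$.

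I do not anticipate a real obstacle: the argument is essentially the bookkeeping of \cite{zhang2015isomorphism} once the reduction $F_\gamma=G_{q(\gamma)}$ is in place. The only points needing care are keeping the norms inside $\frac1N\mathbb Z/\mathbb Z$ so that the Vandermonde nodes $e(n)$ are genuinely distinct, and noticing in (2) that it is precisely the anisotropy of a transitive form that upgrades $b_0F_0$ to $a_0F_0$. Since holomorphy is never used — only the transformation law and the shape of the Fourier expansions — the same proof applies verbatim to meromorphic or real-analytic $F$, as is needed later for Zagier's weight-$3/2$ form $\mathbf G$.
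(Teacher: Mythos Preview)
Your proof is correct and follows essentially the same strategy as the paper's: show $\sum_\gamma F_\gamma$ lies in the span of the $F_0|A$, then separate the norm-$n$ pieces by a Vandermonde argument on the $\tilde T$-action. Your use of $\tilde S$ directly (computing $\rho_{0\gamma}(\tilde S)=i^{-r/2}|D|^{-1/2}$ from the definition) is a mild simplification over the paper, which instead cites Str\"omberg's general matrix-coefficient formula for the element $\left(\begin{smallmatrix}0&-1\\1&N\end{smallmatrix}\right)=ST^N$; since $\rho_D(\tilde T^N)=\mathrm{id}$ these are the same computation, and in part~(2) your Fourier-support argument for linear independence of the nonzero $G_n$ replaces the paper's second Vandermonde step to the same effect.
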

\begin{proof}
Denote $W'$ the space spanned by $F_0|A$,  $A\in \textrm{Mp}_2(\mathbb Z)$ and we need to prove $W=W'$. Note that $F|A=\rho_D(A)F$, so
\[F_0|A=(\rho_D(A)F,\mathfrak e_0)=\sum_{\gamma}F_\gamma(\rho_D(A)\mathfrak e_\gamma,\mathfrak e_0).\]
This implies that $W'\subset W$.

To prove the other inclusion, assume that $\sum_{\gamma\in D}a_\gamma F_\gamma\in W'$ for $a_\gamma\in\mathbb C$. We claim that for each $\gamma$ with $q(\gamma)=n$ and $a_n:=\sum_{\beta\colon q(\beta)=n}a_\beta\neq 0$, then $F_\gamma\in W'$. In particular, for any subset $C\subset D$, if $\sum_{\gamma\in S}F_\gamma\in W'$, then $F_\gamma\in W'$ for each $\gamma\in C$.
To prove the claim, we can rewrite $\sum_{\gamma}a_\gamma F_\gamma=\sum_{n\imod \mathbb Z}F_n$, with $F_n=\sum_{\gamma\colon q(\gamma)=n}a_\gamma F_\gamma$. Since $F$ is invariant under $\text{Aut}(D)$ and $D$ is transitive, we have $F_n=a_nF_\gamma$. Since $a_n\neq 0$, we only have to prove that $F_n\in W'$. Now the transformation rule of $F$ under $T$ shows that $F_n|\tilde T=e(n)F_n$. Since $W'$ is invariant under the action of $\text{Mp}_2(\mathbb Z)$ and $\sum_{n}F_n\in W'$, we have $\sum_ne(nj)F_n\in W'$ for each positive integer $j$. Since $e(n)$'s are distinct mutually, this implies that $F_n\in W'$ by the theory of Vandermonde matrix.

To prove that $W\subset W'$, we only have to show that $F_\gamma\in W'$ for each $\gamma\in D$.
Assume that $D$ has level $N$ and we choose
\[A=\left(\begin{pmatrix}
0&-1\\1& N
\end{pmatrix},\sqrt{\tau+N}\right)\in \textrm{Mp}_2(\mathbb Z).\]
By \cite[Theorem 6.4]{stromberg2013weil}, we have that for each $\gamma\in D$,
\[(\rho_D(A)\mathfrak e_\gamma,\mathfrak e_0)=\xi(0,1)|D|^{-\frac{1}{2}}\neq 0,\]
which is independent of $\gamma$. It follows that $\sum_{\beta\in D}F_\beta\in W'$, so $F_\gamma\in W'$ for each $\gamma$ because of the claim. This complete the proof for part (1).

Similarly, if $f=\sum_{\gamma\in D}a_\gamma F_\gamma=\sum_{n\imod \mathbb Z}F_n$ is $T$-invariant, we only have to prove that $F_n=0$ if $n\neq 0$. Clearly $T$-invariant functions form a subspace, so by the same argument using Vandermonde matrix, we must have $F_n$ is $T$-invariant. This forces $F_n=0$ if $n\neq 0$. We are done with part (2).
\end{proof}

From now on, we always assume that $D$ is transitive. Therefore, the level $N$ of $D$ will be of the following form: $N_p=1$ or $p$ for an odd prime $p$ and $N_2=1,2,4$ or $8$. In other words, $N$ is the conductor of a quadratic Dirichlet character. We shall assume that $N$ is of this form throughout this paper. Note that $ND=\{0\}$ and $|D|$ and $N$ share the same set of prime divisors, but one may not divide the other in general.

\begin{Rmk}
We need Lemma \ref{T-Lem-4} (1) to show that $F_0$ determines $F$, but the assumption that $D$ is transitive may not be necessary. For example, if $D=2^{+2}$, that $F_0$ determines $F$ still holds, in which case Lemma \ref{T-Lem-4} (2) is no longer true (see \cite[Example 13.7]{borcherds1998automorphic}).
\end{Rmk}

\section{Atkin-Lehner Operators}

In this section, we consider the Atkin-Lehner operators on the space $\mathrm{A}(N,k,\chi)$, where $N=4M$, $M$ is odd square-free, $k\in\frac 12+\mathbb Z$, and $\chi$ is a quadratic Dirichlet character modulo $N$. Even though the half-integral weight case is similar to the integral weight case, we treat them in details for later computation. The main references are \cite{kohnen1982newforms} and \cite{ueda1993twisting}.

For any odd divisor $m$ of $N$, we choose $\gamma_m$ and $\gamma_{4m}$ in $\textrm{SL}_2(\mathbb Z)$ such that
\[\gamma_m\equiv \left\{\begin{array}{ll}
S & \imod m^2,\\
I & \imod (\frac N m)^2,
\end{array}
\right.\qquad
\gamma_{4m}:=S\gamma^{-1}_{M/m}\equiv \left\{\begin{array}{ll}
S & \imod (4m)^2,\\
I & \imod (\frac Mm)^2.
\end{array}
\right.
\]
The existence of $\gamma_m$ follows from the existence of such matrices in $\textrm{SL}_2(\mathbb Z/N^2\mathbb Z)$ by Chinese remainder theorem and then from the surjectivity of $\textrm{SL}_2(\mathbb Z)\rightarrow \textrm{SL}_2(\mathbb Z/N^2\mathbb Z)$. We shall also assume for simplicity that all of the entries of $\gamma_m$ are positive; this can be achieved by left and/or right multiplication by matrices in $\Gamma(N^2)$.

For any nonzero integer $m$, let
\[\delta_m=\begin{pmatrix}
m&0\\0&1
\end{pmatrix},\quad \widetilde{\delta_m}=\left(\begin{pmatrix}
m&0\\0&1
\end{pmatrix}, m^{-\frac 14}\right).\]
For any odd positive divisor $m$ of $N$, let $W(m)=\gamma^*_m\widetilde{\delta_m}$.
which makes sense since $m$ is odd and $\gamma_m\in\Gamma_0(4)$. Define
\[\tau_N=\left(1,\sqrt{-i}\right)\widetilde{\beta_N}=\left(\begin{pmatrix}
0&-1\\N&0
\end{pmatrix}, N^{\frac{1}{4}}(-i\tau)^{\frac{1}{2}}\right),\quad \beta_N=\begin{pmatrix}
0&-1\\N&0
\end{pmatrix}.\]
For each divisor $m$, even or odd, of $N$, define $U(m)$ as follows:
\[f|U(m)=m^{\frac k2-1}\sum_{j\imod m}f|\widetilde{\delta_m}^{-1}\tilde T^j=m^{\frac k2-1}\sum_{j\imod m}f|\widetilde{\delta_m^{-1}T^j}.\]
Finally, we define $Y(p)$ for each odd $p\mid N$ by
\[f|Y(p)=p^{1-\frac k2}f|U(p)W(p),\]
and $Y(4)$ by
\[f|Y(4)=4^{1-\frac k2}f|U(4)W(M)\tau_N.\]

We collect a few properties of these operators in the following proposition:
\begin{Prop}\label{A-Prop-1} Let $f\in \mathrm{A}(N,k,\chi)$.

(1) $f|\tau_N\in \mathrm{A}(N,k,\chi\left(\frac N\cdot\right))$ and $f|\tau_N^2=f$.

(2) For each $m\mid N$, $f|U(m)\in \mathrm{A}(N,k,\chi\left(\frac m\cdot\right))$ and $f|U(m)=m^{\frac k2-1}f|[\Delta_1(N)\widetilde{\delta_m}^{-1}\Delta_1(N)]$.

(3) For each $m\mid M$, $f|W(m)\in \mathrm{A}(N,k,\chi\left(\frac m\cdot\right))$ and \[f|W(m)^2=\varepsilon_m^{-2k}\chi_m(-1)\chi_{N/m}(m)f.\]
Moreover, if $m,m'\mid M$ and $(m,m')=1$, then $f|W(m)W(m')=\chi_{m'}(m)f|W(mm')$.

(4) For any $m,m'\mid M$ with $(m,m')=1$, then
\[f|W(m)U(m')=\chi_{m}(m')f|U(m')W(m)\text{ and } f|U(4)W(m)=f|W(m)U(4).\]

(5) For any $m\mid M$, $f|\tau_NU(m)W(m)=\chi_m(M/m)f|W(m)U(m)\tau_N$.
\end{Prop}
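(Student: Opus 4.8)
The plan is to verify the relation $f|\tau_N U(m) W(m) = \chi_m(M/m) f|W(m)U(m)\tau_N$ for $m\mid M$ by unwinding the definitions of the three operators in $\textrm{Mp}_2^+(\mathbb R)$ and comparing the two double cosets (or, since $U(m)$ is given by an explicit sum, comparing the two sums of slash operators term by term). First I would reduce to the prime case: write $m = \prod_{p\mid m} p$ and use parts (3) and (4) of the proposition, which already tell us how $W(m)$ and $U(m')$ interact and how the $W(p)$ (resp. $U(p)$) multiply among themselves, together with the multiplicativity of $U$ over coprime divisors. The point is that $\tau_N$ swaps the $p$-part with the $(N/p)$-part, so conjugating $U(p)$ by $\tau_N$ should morally give something like $U(N/p)$-type behavior; but here we only claim a relation with $U(m)W(m)$ again, so the bookkeeping is of the Atkin-Lehner type rather than a genuine change of level.

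Concretely, for a single odd prime $p\mid M$ I would compute the metaplectic element $\tau_N \widetilde{\delta_p}^{-1}\tilde T^j \gamma_p^* \widetilde{\delta_p}$ and show it equals, up to the scalar multiplier $\varepsilon$'s and Kronecker symbols accounted for by $\nu$, an element of the form $\gamma_p^* \widetilde{\delta_p}\,\widetilde{\delta_p}^{-1}\tilde T^{j'}\,\tau_N$ for a suitable reindexing $j\mapsto j'$ of the residues mod $p$ (the reindexing being a bijection on $\mathbb Z/p\mathbb Z$, so the sum over $j$ is preserved). The underlying $\textrm{GL}_2^+(\mathbb R)$ identity is a matrix computation: $\beta_N \delta_p^{-1} \gamma_p \delta_p$ versus $\gamma_p \delta_p \delta_p^{-1} T^{j'}\beta_N$, using $\beta_N = \delta_N S$ and the congruence conditions defining $\gamma_p$ modulo $p^2$ and modulo $(N/p)^2$. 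The scalar $\chi_p(M/p)$ will emerge from the Kronecker symbols in the multiplier system $\nu$ when one rewrites $\nu(\gamma_p)$ after conjugation, exactly as in the proof of part (3); the $\varepsilon_d$ factors from $j(\cdot,\tau)$ should cancel because both sides carry the same metaplectic weight.

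The main obstacle I expect is the careful tracking of the metaplectic cocycle and the multiplier system through the conjugation: in $\textrm{Mp}_2^+(\mathbb R)$ the factors $j(A,\tau)$ multiply with the automorphy-factor cocycle, and $\tau_N$ itself carries the awkward factor $(1,\sqrt{-i})\widetilde{\beta_N}$, so one must be meticulous that the square-roots of $c\tau+d$ are chosen consistently (no spurious sign) and that the Kronecker symbol identities $\left(\frac{c}{d}\right)=\textrm{sign}(c)\left(\frac{c}{-d}\right)$, $\left(\frac{2}{d}\right)=\left(\frac{d}{2}\right)$ are applied with the right positivity conventions — which is why the excerpt took care to arrange all entries of $\gamma_m$ to be positive. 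A clean way to organize this is to first establish the identity purely in $\textrm{Mp}_2^+(\mathbb R)$ for the generator-level pieces ($\widetilde{\delta_p}$, $\gamma_p^*$, $\tilde T$, $\tau_N$), then observe that applying $f|{-}$ to both sides and summing over $j\imod p$ yields the claim, and finally pass from $p$ to general $m\mid M$ by induction using parts (3) and (4) together with the coprime multiplicativity of $U$, collecting the characters $\chi_p(M/p)$ into $\chi_m(M/m)$ and any cross terms $\chi_{p}(p')$ that cancel against those already recorded in (3) and (4).
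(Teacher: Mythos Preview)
Your approach is viable but differs from the paper's, which is more economical. The paper does \emph{not} reduce to primes and does \emph{not} match terms $j\mapsto j'$ in the $U(m)$-sum. Instead it uses the double-coset description from part (2): writing
\[
f|\tau_N U(m)W(m)=m^{\frac k2-1}f\big|\big[\Delta_1\,\tau_N\widetilde{\delta_m^{-1}}W(m)\,\Delta_1\big]
=m^{\frac k2-1}f\big|\big[\Delta_1\,(\tau_N\widetilde{\delta_m^{-1}}W(m)\tau_N^{-1})\,\tau_N\,\Delta_1\big],
\]
so that the entire block $\tau_N\widetilde{\delta_m^{-1}}\gamma_m^*\widetilde{\delta_m}\tau_N^{-1}$ is a \emph{single} metaplectic element $\alpha^*$. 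One then factors $\alpha^*=(\alpha\gamma_m^{-1})^*\gamma_m^*$, checks that $\alpha\gamma_m^{-1}\in\Gamma_0(N)$ with lower-right entry $\equiv N/m\pmod m$ and $\equiv 1\pmod{N/m}$, and reads off $\chi_m(M/m)$ directly from $f|(\alpha\gamma_m^{-1})^*$. The remaining bracket $[\Delta_1\gamma_m^*\tau_N\Delta_1]$ is exactly $m^{1-k/2}W(m)U(m)\tau_N$. No induction, no reindexing, no separate tracking of how $\tau_N$ interacts with $U(p)$ and $W(p)$ individually.

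Your route---primes first, then induct via (3) and (4)---forces you to apply (5) for $p$ to forms whose character has already been twisted (by $\tau_N$, by earlier $U(p')$, $W(p')$), so the constant you pick up at each step is $\chi'_p(M/p)$ for a running character $\chi'$, and you must verify that all the cross-terms $\chi_p(p')$, $\left(\frac{N}{\cdot}\right)_p$ cancel to leave $\chi_m(M/m)$. This works, but the bookkeeping is precisely what the double-coset argument sidesteps. If you want to keep your structure, I would still recommend replacing the term-by-term $j\mapsto j'$ match with the single conjugation $\tau_N\widetilde{\delta_p^{-1}}W(p)\tau_N^{-1}=\alpha^*$ inside the double coset; it is the same underlying matrix identity and avoids having to argue that the reindexing is a bijection.
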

\begin{proof}
The first two parts are contained in Proposition 1.4 and 1.5 of \cite{shimura1973modular}. The part (3) and (4) are partially contained in Proposition 1.18 and 1.20 of \cite{ueda1993twisting}. For the second identity in part (4),
\begin{align*}
f|U(4)W(p) & = 4^{\frac k2-1}f|[\Delta_1\widetilde{\delta_4^{-1}}\Delta_1]W(p)
=4^{\frac k2-1}f|[\Delta_1\widetilde{\delta_4^{-1}}W(p)\Delta_1]\\
&=4^{\frac k2-1}f|[\Delta_1\widetilde{\delta_4^{-1}}\gamma_p^*\widetilde{\delta_p}\Delta_1]= 4^{\frac k2-1}f|[\Delta_1(\delta_4^{-1}\gamma_p\delta_4)^*
\widetilde{\delta_p}\widetilde{\delta_4^{-1}}\Delta_1]\\
&=4^{\frac k2-1} f|[\Delta_1\alpha^*W(p)\widetilde{\delta_4^{-1}}\Delta_1],
\end{align*}
where $\alpha=\delta_4^{-1}\gamma_p\delta_4\gamma_p^{-1}\in \Gamma_0(N)$. Since the right-lower entry $d_\alpha$ of $\alpha$ satisfies $d_\alpha\equiv 4\imod p$ and $d_\alpha\equiv 1\imod N/p$, we have $f|\alpha^*=f$. It follows that
\[f|U(4)W(p)=4^{\frac k2-1}f|W(p)[\Delta_1\widetilde{\delta_4^{-1}}\Delta_1]=f|W(p)U(4).\]
The general case follows from part (3) by decomposing $W(m)$.

The proof of part (5) is similar:
\begin{align*}
f|\tau_NU(m)W(m) & = m^{\frac k2-1}f|[\Delta_1\tau_N\widetilde{\delta_m^{-1}}W(m)\Delta_1]\\
&=m^{\frac k2-1}f|[\Delta_1(\tau_N\widetilde{\delta_m^{-1}}W(m)\tau_N^{-1})\tau_N\Delta_1]\\
&=m^{\frac k2-1}f|[\Delta_1\alpha^*\tau_N\Delta_1]\\
&= m^{\frac k2-1}f|(\alpha\gamma_m^{-1})^*[\Delta_1\gamma_m^*\tau_N\Delta_1]\\&
=f|(\alpha\gamma_m^{-1})^*W(m)U(m)\tau_N,
\end{align*}
where
\[\alpha\gamma_m^{-1}\equiv \left\{\begin{array}{cl}
\begin{pmatrix}
(N/m)^{-1} & 0\\
0 & N/m
\end{pmatrix} & \imod m\\
\begin{pmatrix}
1 & 0\\
0 & 1
\end{pmatrix} & \imod N/m\\
\end{array}
\right.\]
(the inverse is taken in $\mathbb Z/m\mathbb Z$). Then we have \[f|\tau_NU(m)W(m)=\chi_m(M/m)f|W(m)U(m)\tau_N,\] as expected.
\end{proof}

Now we consider the operators $Y(p)$ and $Y(4)$.

\begin{Prop}\label{A-Prop-2}
(1) The space $\mathrm{A}(N,k,\chi)$ decomposes under $Y(4)$ into eigenspaces \[\mathrm{A}(N,k,\chi)=\mathrm{A}(N,k,\chi)_{\mu_2^+}\oplus \mathrm{A}(N,k,\chi)_{\mu_2^-}\] where the eigenvalues are \[\mu_2^+=\chi_2(-1)^{k+1}(-1)^{\lfloor \frac{2k+1}{4}\rfloor}2^{\frac 32},\quad \mu_2^-=-2^{-1}\mu_2^+.\] Moreover, $f=\sum_na(n)q^n\in \mathrm{A}(N,k,\chi)_{\mu_2^+}$ if and only if
\[a(n)=0 \text{\ \ whenever }\chi_2(-1)(-1)^{k-\frac 12}n\equiv 2,3\imod 4.\]

(2) Assume that $p\mid M$ with $\chi_p=1$, the space $\mathrm{A}(N,k,\chi)$ decomposes under $Y(p)$ into eigenspaces \[\mathrm{A}(N,k,\chi)=\mathrm{A}(N,k,\chi)_{\mu_p^+}\oplus \mathrm{A}(N,k,\chi)_{\mu_p^-}\] where the eigenvalues are $\mu_p^+=\varepsilon_p^{-1}p^{\frac 12}$, $\mu_p^-=-\mu_p^+$. Moreover, $f=\sum_na(n)q^n\in \mathrm{A}(N,k,\chi)_{\mu_p^{\pm}}$ if and only if
\[a(n)=0 \text{\ \ whenever }\left(\frac np\right)=\mp.\]

(3) Assume that $p\mid M$ with $\chi_p=\left(\frac \cdot p\right)$, the space $\mathrm{A}(N,k,\chi)$ decomposes under $Y(p)$ into eigenspaces \[\mathrm{A}(N,k,\chi)=\mathrm{A}(N,k,\chi)_{\mu_p^+}\oplus \mathrm{A}(N,k,\chi)_{\mu_p^-}\] where the eigenvalues are $\mu_p^+=-1$, $\mu_p^-=p$.

(4) The operators $Y(p)$, $p\mid M$, and $Y(4)$ map $\mathrm{A}(N,k,\chi)$ to itself and they commute mutually. In particular, $\mathrm{A}(N,k,\chi)$ decomposes into direct sum of common eigenspaces for these operators.
\end{Prop}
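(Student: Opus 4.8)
The plan is to handle all four parts by one mechanism: each operator $Y(p)$ ($p\mid M$) and $Y(4)$ satisfies a monic quadratic polynomial identity over $\mathbb C$ with distinct roots, which immediately yields semisimplicity together with the two claimed eigenvalues, hence the eigenspace decompositions; the Fourier-coefficient descriptions in parts (1) and (2) then follow by unwinding the eigenvalue equation on $q$-expansions.

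\emph{The quadratic relations.} For an odd $p\mid M$ I would write $Y(p)^2=p^{2-k}\,U(p)W(p)U(p)W(p)$ and move the inner $W(p)$ to the left using the commutation rules of Proposition \ref{A-Prop-1}(3),(4), the explicit shape $W(p)=\gamma_p^*\widetilde{\delta_p}$, and the double-coset form $U(p)=p^{\frac k2-1}[\Delta_1(N)\widetilde{\delta_p^{-1}}\Delta_1(N)]$ from \ref{A-Prop-1}(2). Collecting the resulting coset representatives and evaluating the quadratic Gauss sum $\sum_{j\imod p}e_p(j^2 n)=\left(\frac np\right)\varepsilon_p\sqrt p$ gives, when $\chi_p=1$, the relation $Y(p)^2=\varepsilon_p^{-2}p\cdot\mathrm{id}=\left(\frac{-1}p\right)p\cdot\mathrm{id}$, whose roots are $\pm\varepsilon_p^{-1}p^{1/2}$, while when $\chi_p=\left(\frac\cdot p\right)$ the character inserts an extra Legendre symbol into each term and the relation becomes $(Y(p)+1)(Y(p)-p)=0$, with roots $-1$ and $p$. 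For $Y(4)=4^{1-\frac k2}U(4)W(M)\tau_N$ the same scheme applies: using \ref{A-Prop-1}(4),(5) to move $W(M)$ and $\tau_N$ past $U(4)$ and $W(M)$ reduces the square to the four-term twisted Gauss sum over $j\imod 4$, whose modulus is $2^{3/2}$ and whose phase is $\chi_2(-1)^{k+1}(-1)^{\lfloor(2k+1)/4\rfloor}$; the outcome is $Y(4)^2=\tfrac12\mu_2^+\,Y(4)+\tfrac12(\mu_2^+)^2\cdot\mathrm{id}$ with $(\mu_2^+)^2=2^3$, so the roots are $\mu_2^+$ and $-\tfrac12\mu_2^+$.

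\emph{The Fourier characterizations.} Since the two roots are always distinct, each operator is diagonalizable and $\mathrm{A}(N,k,\chi)$ splits into the two eigenspaces claimed in (1)--(3). For the explicit descriptions I would read $f|Y(p)$ and $f|Y(4)$ off the same coset data on $q$-expansions. When $p\mid M$ and $\chi_p=1$, the part of $f|Y(p)$ supported on $p\nmid n$ is $\varepsilon_p^{-1}p^{1/2}\sum_{p\nmid n}\left(\frac np\right)a(n)q^n$, so $f|Y(p)=\mu_p^{\pm}f$ forces $a(n)\bigl(1\mp\left(\frac np\right)\bigr)=0$, that is $a(n)=0$ whenever $\left(\frac np\right)=\mp$ (the $p\mid n$ coefficients being supplied by the $\widetilde{\delta_p}$-part of $W(p)$, whose contribution is compatible with the eigenvalue equation). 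Likewise $f|Y(4)=\mu_2^+f$ becomes, after evaluating the mod-$4$ Gauss sum, the congruence condition $a(n)=0$ for $\chi_2(-1)(-1)^{k-1/2}n\equiv 2,3\imod 4$, i.e.\ Kohnen's plus condition; in case (3) there is no such description and none is claimed.

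\emph{Part (4) and the main obstacle.} That each of $Y(p)$ and $Y(4)$ maps $\mathrm{A}(N,k,\chi)$ to itself follows from the character bookkeeping in \ref{A-Prop-1}(1),(2),(3): the nebentypus twists $\left(\frac m\cdot\right)$ introduced by the various $U$, $W$, $\tau_N$ multiply to the trivial character on the relevant group (equivalently, the ``determinant part'' of each operator is a perfect square, e.g.\ $M^2$ for $Y(4)$). Mutual commutativity follows from the relations in \ref{A-Prop-1}(3)--(5): $U(p),W(p)$ commute with $U(p'),W(p')$ for $p'\neq p$, with $U(4)$, and with $\tau_N$ up to character factors that cancel inside the normalized products defining the $Y$'s; a family of commuting semisimple operators shares an eigenbasis, giving the common-eigenspace decomposition. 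The main obstacle throughout is bookkeeping rather than any conceptual difficulty: one must carry the metaplectic automorphy factors and the multiplier system $\nu$ correctly through all the coset manipulations and evaluate the Gauss sums modulo $4$ and modulo $p$ precisely enough to pin down the \emph{exact} eigenvalues — in particular the $k$-dependent phases $\varepsilon_d$, $\varepsilon_p$, and $(-1)^{\lfloor(2k+1)/4\rfloor}$ — rather than merely identifying them up to a root of unity.
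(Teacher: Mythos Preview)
Your approach is correct and is in fact more self-contained than the paper's own proof. The paper does not reprove parts (1)--(3) or the commutativity of the $Y(p)$'s among themselves: it simply cites Kohnen \cite{kohnen1982newforms} for (1) and Ueda \cite{ueda1993twisting} for (2), (3), and $[Y(p_1),Y(p_2)]=0$. The only piece the paper proves directly is that $Y(4)$ commutes with each $Y(p)$, and for that it uses exactly the mechanism you describe in your part (4): rewrite $U(4)W(M)\tau_N U(p)W(p)$ by pushing $\tau_N$ and $W(M)$ past $U(p)W(p)$ via Proposition~\ref{A-Prop-1}(3)--(5), tracking the character twists until both sides match. Your sketch of that step is accurate.

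What you do differently is that you reconstruct the content of the cited references rather than invoke them: the quadratic relations $Y(p)^2=\varepsilon_p^{-2}p\cdot\mathrm{id}$ (for $\chi_p=1$), $(Y(p)+1)(Y(p)-p)=0$ (for $\chi_p=\left(\tfrac{\cdot}{p}\right)$), and the analogous one for $Y(4)$, together with the Gauss-sum computation on $q$-expansions, are precisely what Kohnen and Ueda establish. So your proposal is the ``unrolled'' version of the paper's proof-by-citation; the underlying mathematics is the same. The advantage of your route is that it makes the paper self-contained and exposes where the exact phases $\varepsilon_p^{-1}$, $(-1)^{\lfloor(2k+1)/4\rfloor}$, and $\chi_2(-1)^{k+1}$ come from; the advantage of the paper's route is brevity, since these computations are already in the literature and are, as you rightly identify, pure bookkeeping with metaplectic cocycles and Gauss sums rather than anything conceptually new.
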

\begin{proof} That $Y(p_1)$ and $Y(p_2)$ commute is given in Proposition 1.24 of \cite{ueda1993twisting}, part (1) in Proposition 1 of \cite{kohnen1982newforms}, and part (2) and (3) in Proposition 1.27 and 1.29 of \cite{ueda1993twisting}.

We only have to prove that $Y(4)$ and $Y(p)$ commute. That is, we need to prove that
\[f|U(4)W(M)\tau_NU(p)W(p)=f|U(p)W(p)U(4)W(M)\tau_N.\] Indeed, we have $f|U(4)W(M)\in \mathrm{A}(N,k,\chi\left(\frac M\cdot\right))$, and by Proposition \ref{A-Prop-1} (5),
\begin{align*}
f|U(4)W(M)\tau_NU(p)W(p) & = \chi_p(M/p)\left(\frac{M/p}{p}\right)f|U(4)W(M)W(p)U(p)\tau_N.
\end{align*}
Therefore, we only have to prove that
\[\chi_p(M/p)\left(\frac{M/p}{p}\right)f|U(4)W(M)W(p)U(p)=f|U(p)W(p)U(4)W(M).\]
By decomposing $M=p\cdot \frac Mp$ and applying Proposition \ref{A-Prop-1} (3) and (4), this can be done. One should pay attention to the change of characters when applying Proposition \ref{A-Prop-1}.
\end{proof}

It follows that the space $\mathrm{A}(N,k,\chi)$ is a direct sum of common eigenspaces for the operators $Y(4)$, $Y(p)$, $p\mid M$. The subspace $\mathrm{A}(N,k,\chi)_{\mu_2^+}$ is the well-known \emph{Kohnen's plus space} \cite{kohnen1982newforms}. Now let us consider the special case when $\chi_p=1$ for each $p\mid M$. If we denote $Z(p)=\varepsilon_pp^{-\frac 12}Y(p)$ for $p\mid M$, then $Z(p)$ are involutions on $\mathrm{A}(N,k,\chi)_{\mu_2^+}$. For each sign vector $\epsilon=(\epsilon_p)_{p\mid M}$, we denote $\mathrm{A}^\epsilon(N,k,\chi)$ to be the subspace in $\mathrm{A}(N,k,\chi)_{\mu_2^+}$ of modular forms $f$ with $f|Z(p)=\epsilon_pf$ for all $p\mid M$.

\begin{Cor}\label{A-Cor-1}
Assume that $\chi_p=1$ for each $p\mid M$ and let $Z(p)$ be as above. If $f=\sum_na(n)q^n\in \mathrm{A}(N,k,\chi)_{\mu_2^+}$, then $f=\sum_\epsilon f^\epsilon$ with $f^\epsilon=\sum_nb_\epsilon(n)q^n\in \mathrm{A}^\epsilon(N,k,\chi)$, where for $(n,N)=1$,
\[b^\epsilon(n)=2^{-\omega(M)}a(n)\prod_{p\mid M}\left(1+\epsilon_p\left(\frac np\right)\right).\]
\end{Cor}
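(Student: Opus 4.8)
The plan is to realize $f^\epsilon$ as the image of $f$ under the explicit joint projection $\pi_\epsilon=\prod_{p\mid M}\frac12\bigl(1+\epsilon_pZ(p)\bigr)$ and then to read off its Fourier expansion at $\infty$. By part (4) of Proposition \ref{A-Prop-2} the operators $Y(p)$ ($p\mid M$) commute with one another and with $Y(4)$, so the $Z(p)=\varepsilon_pp^{-1/2}Y(p)$ are commuting operators preserving $\mathrm{A}(N,k,\chi)_{\mu_2^+}$, and $Z(p)^2=1$ since the eigenvalues of $Y(p)$ on that space are $\pm\varepsilon_p^{-1}p^{1/2}$ by part (2). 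Hence $\pi_\epsilon$ is a well-defined idempotent with image $\mathrm{A}^\epsilon(N,k,\chi)$: from $Z(q)\cdot\frac12\bigl(1+\epsilon_qZ(q)\bigr)=\epsilon_q\cdot\frac12\bigl(1+\epsilon_qZ(q)\bigr)$ one gets $Z(q)\pi_\epsilon=\epsilon_q\pi_\epsilon$ for every $q\mid M$, and $\sum_\epsilon\pi_\epsilon=\mathrm{id}$ because for each nonempty $S\subseteq\{p:p\mid M\}$ the cross term $\sum_\epsilon\prod_{p\in S}\epsilon_p$ vanishes. Setting $f^\epsilon:=\pi_\epsilon f$ therefore gives $f=\sum_\epsilon f^\epsilon$ with $f^\epsilon\in\mathrm{A}^\epsilon(N,k,\chi)$.

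The substance of the argument is the effect of a single $Z(p)$ on the Fourier coefficients prime to $p$. Fix $p\mid M$ and let $g=\sum_nc(n)q^n\in\mathrm{A}(N,k,\chi)$; since $U(p)$ and $W(p)$ each multiply the character by the real character $\left(\frac p\cdot\right)$, the operator $Y(p)$ — hence $Z(p)$ — preserves $\mathrm{A}(N,k,\chi)$, and as $\chi_p=1$ part (2) of Proposition \ref{A-Prop-2} applies to $g$. Decompose $g=g^++g^-$ with $Z(p)g^{\pm}=\pm g^{\pm}$; by that part the $n$-th coefficient of $g^{\pm}$ vanishes whenever $\left(\frac np\right)=\mp$. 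Comparing $g=g^++g^-$ with $Z(p)g=g^+-g^-$ coefficient by coefficient then shows that for $(n,p)=1$ the $n$-th Fourier coefficient of $Z(p)g$ equals $\left(\frac np\right)c(n)$.

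It remains to iterate. For $S\subseteq\{p:p\mid M\}$, applying the operators $Z(p)$ with $p\in S$ one at a time keeps the form inside $\mathrm{A}(N,k,\chi)$, so the previous step applies at each stage, and for $(n,N)=1$ — whence $(n,p)=1$ for all $p\mid M$ — the $n$-th coefficient of $f\,\big|\,\prod_{p\in S}Z(p)$ equals $\bigl(\prod_{p\in S}\left(\frac np\right)\bigr)a(n)$. Expanding $\pi_\epsilon f$ as a sum over subsets $S$ and using the elementary identity $\prod_{p\mid M}(1+x_p)=\sum_S\prod_{p\in S}x_p$ with $x_p=\epsilon_p\left(\frac np\right)$ then yields the claimed formula for $b^\epsilon(n)$. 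The one point deserving care is the second step, namely that the stated action of $Z(p)$ on Fourier coefficients is genuinely forced by Proposition \ref{A-Prop-2}(2) and that the character is unchanged so this may be reapplied throughout the iteration; the remainder is bookkeeping with commuting idempotents.
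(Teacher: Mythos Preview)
Your argument is correct and follows essentially the same route as the paper: you define $f^\epsilon$ via the projection $2^{-\omega(M)}\prod_{p\mid M}(1+\epsilon_pZ(p))$ and then compute the Fourier coefficients using the eigenspace characterization of Proposition~\ref{A-Prop-2}(2), which is exactly what the paper does by citing the proof of Proposition~1.29 of \cite{ueda1993twisting}. Your version simply unpacks that citation, deriving the key fact that $Z(p)$ acts on the $n$-th coefficient (for $p\nmid n$) by $\left(\frac np\right)$ directly from the ``if and only if'' in Proposition~\ref{A-Prop-2}(2).
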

\begin{proof}
It is clear that
\[f^\epsilon=2^{-\omega(M)}f|\prod_{p\mid M}(1+\epsilon_pZ(p)),\]
and the corollary follows from the proof of Proposition 1.29 of \cite{ueda1993twisting}.
\end{proof}

\section{Discriminant Forms and the $\epsilon$-Condition}

Let $D$ be a transitive discriminant form of odd signature $r$ and level $N$. We first see that it determines a quadratic Dirichlet character $\chi$ mod $N$.

\begin{Lem}\label{D-Lem-1}
A transitive discriminant form $D$ of odd signature $r$ and level $N$ determines an even  quadratic Dirichlet character $\chi$ mod $N$, such that
\[\rho_D(A)\mathfrak e_0=
\nu(A)^{r}\chi(d)\mathfrak e_0,
\quad \text{ for }A=\begin{pmatrix}
a&b\\c&d
\end{pmatrix}\in\Gamma_0(N).\]
Explicitly, if we write $\chi=\prod_p\chi_p$ into $p$-components, then if $p$ is odd,
\[\chi_p(d)=\left\{
\begin{array}{cl}
1, & \quad p\nmid |D|\text{ or } p^2\mid |D|,\\
\left(\frac{d}{p}\right), & \quad \text{otherwise}.
\end{array}
\right.\]
For $\chi_2$, assuming $2\mid |D|$, 
\[\chi_2(d)=\left\{
\begin{array}{cl}
1, & \quad \left(\frac{-1}{|D|}\right)=+1, D_2=2^{+3}_{\pm 3},2^{+1}_{\pm 1},\\
\left(\frac{-4}{d}\right), & \quad \left(\frac{-1}{|D|}\right)=-1, D_2=2^{+3}_{\pm 3},2^{+1}_{\pm 1},\\
\left(\frac{2}{d}\right), & \quad \left(\frac{-1}{|D|}\right)=+1, D_2=4^{+1}_{\pm 1},4^{-1}_{\pm 3},\\
\left(\frac{-2}{d}\right), & \quad \left(\frac{-1}{|D|}\right)=-1, D_2=4^{+1}_{\pm 1},4^{-1}_{\pm 3}.
\end{array}
\right.\]
\end{Lem}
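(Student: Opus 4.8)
The plan is to extract the character $\chi$ directly from the formula for $\rho_D(A)\mathfrak e_0$ on $\Gamma_0(N)$, reducing everything to the $p$-components via the decomposition $D=\oplus_p D_p$, $\rho_D=\otimes_p\rho_{D_p}$, and the fact that on the subspace $\mathbb C\mathfrak e_0$ the actions multiply. First I would recall Str\"omberg's formula \cite{stromberg2013weil} for the matrix coefficient $(\rho_D(A)\mathfrak e_0,\mathfrak e_0)$ for $A=\begin{pmatrix}a&b\\c&d\end{pmatrix}\in\Gamma_0(N)$; since $\mathfrak e_0$ is a fixed vector and $ND=\{0\}$, this coefficient is a Gauss-sum type expression in $c,d$ that depends only on $d\bmod N$ and on $c$ through its class, and by the metaplectic cocycle it must be of the form (root of unity depending on $D$)$\,\times\nu(A)^{r}$. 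The point is that after dividing out $\nu(A)^r$ — which carries the $\varepsilon_d^{-1}$ and the quadratic-symbol dependence coming from the half-integral weight — what remains is a genuine Dirichlet character $\chi(d)$, and its evenness follows from $\rho_D(Z)\mathfrak e_0=i^{-r}\mathfrak e_0$ together with $Z$ projecting to $-I$, i.e.\ from the constraint $2k\equiv r\pmod 4$ already imposed.

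Next I would compute $\chi_p$ component by component using the explicit Weil representation data for each transitive $D_p$ listed in Proposition \ref{T-Prop-1}. For odd $p$: if $p\nmid|D|$ then $\rho_{D_p}$ is trivial and $\chi_p=1$; if $D_p=p^{\pm1}$ (so $p\mid\mid|D|$), the classical formula for the theta multiplier / Gauss sum of a one-dimensional form over $\mathbb F_p$ gives the Legendre symbol $\left(\frac{d}{p}\right)$ up to the $\nu^r$ factor; if $D_p=p^{\pm2}$ (so $p^2\mid|D|$), the two Gauss sums combine and the quadratic symbol cancels, leaving $\chi_p=1$. This matches the stated dichotomy $p\nmid|D|$ or $p^2\mid|D|$ versus $p\mid\mid|D|$. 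For $p=2$ the same strategy applies but the bookkeeping is heavier: one evaluates $(\rho_{D_2}(A)\mathfrak e_0,\mathfrak e_0)$ on $\Gamma_0(4)$ (resp.\ $\Gamma_0(8)$ when $D_2$ has level $8$) for each admissible $D_2$ from the list $2^{+3}_{\pm3}$, $2^{+1}_{\pm1}$, $4^{\pm1}_t$, and reads off whether the residual character is $1$, $\left(\frac{-4}{d}\right)$, $\left(\frac{2}{d}\right)$, or $\left(\frac{-2}{d}\right)$. The split between these four cases is governed precisely by the sign $\left(\frac{-1}{|D|}\right)$ and by whether $D_2$ has level $4$ or $8$ (equivalently, appears with a subscript $t$), which is what the table records; the odd-component factors only shift $|D|$ and hence the value of $\left(\frac{-1}{|D|}\right)$, consistently with the dependence on $\left(\frac{-1}{|D|}\right)$ rather than on $\left(\frac{-1}{|D_2|}\right)$.

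The main obstacle is the $2$-adic computation: one must handle the metaplectic signs carefully, since the half-integral weight introduces the $\varepsilon_d$ and the Kronecker symbol $\left(\frac{c}{d}\right)$ through $\nu$, and disentangling which part of the Gauss sum is absorbed into $\nu(A)^r$ and which survives as $\chi_2(d)$ requires knowing $r\bmod 8$ via the oddity formula of Section \ref{Preliminaries}. Concretely, for each $D_2$ one uses the oddity formula to pin down $r\bmod 8$ (and hence $\nu^r=\left(\frac{-1}{d}\right)^{(r-1)/2}\nu$ or similar), then verifies the claimed value of $\chi_2$ by checking it on generators of $(\mathbb Z/N_2\mathbb Z)^\times$ lifted to $\Gamma_0(N)$. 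Once the $p$-components are all identified, multiplicativity of $\rho_D$ over $p$ gives $\chi=\prod_p\chi_p$, and one checks $\chi(-1)=1$ directly from the product formula (the contributions conspire so that the total is even), completing the proof. I expect the odd-$p$ cases to be short and the statement to hinge almost entirely on a clean but somewhat lengthy case analysis at $p=2$.
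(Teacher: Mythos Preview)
Your approach is correct and rests on the same ingredient as the paper's---Str\"omberg's explicit formula for $\rho_D$ on $\Gamma_0(N)$---but you are proposing to work harder than necessary. The paper does not decompose first and compute $\chi_p$ case by case; instead it quotes a single global closed formula (Str\"omberg's Lemma~5.6) and, after simplifying via the congruence
\[
\mathrm{oddity}(D)\equiv \mathrm{sign}(D)+\left(\tfrac{-1}{|D|}\right)-1\pmod 4,
\]
obtains directly
\[
\chi(d)=\left(\tfrac{d}{2|D|}\right)\quad\text{or}\quad \chi(d)=\left(\tfrac{d}{2|D|}\right)\left(\tfrac{-1}{d}\right),
\]
according to the sign of $\left(\tfrac{-1}{|D|}\right)$. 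From this Kronecker-symbol expression the local components $\chi_p$ are read off by multiplicativity, so the ``somewhat lengthy case analysis at $p=2$'' you anticipate collapses to inspecting which powers of $2$ divide $|D|$. Your local-first route would certainly work, but the paper's global-then-decompose route is shorter and avoids separate metaplectic bookkeeping for each $D_2$.

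One small correction: your justification of evenness via ``the constraint $2k\equiv r\pmod 4$'' is misplaced---$k$ plays no role in the statement. Evenness is a property of $\chi$ alone and drops out either from the global formula above (a Kronecker symbol in $d$ with modulus $2|D|$, possibly twisted by $\left(\tfrac{-1}{d}\right)$, is automatically even once you use that $r$ is odd and the oddity relation) or, as you half-indicate, by comparing $\rho_D(Z)\mathfrak e_0=i^{-r}\mathfrak e_0$ with $\nu(-I)^r=i^{-r}$, which forces $\chi(-1)=1$ without reference to any weight.
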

\begin{proof}
That $D$ determines a quadratic Dirichlet character $\chi$ follows from \cite[Lemma 5.6]{stromberg2013weil} and we have the explicit formula
\[\chi(d)=\left\{
\begin{array}{rl}
\left(\frac{d}{2|D|}\right), & \quad \left(\frac{-1}{|D|}\right)=1,\\
\left(\frac{d}{2|D|}\right)\left(\frac{-1}{d}\right), &  \quad \left(\frac{-1}{|D|}\right)=-1,
\end{array}
\right.\]
by elementary computation and by the relation
	\[\textrm{oddity}(D)\equiv \textrm{sign}(D)+\left(\frac{-1}{|D|}\right)-1\imod 4.\]
The formulas for local components $\chi_p$ then follow easily.
\end{proof}

In order to introduce the $\epsilon$-condition, we investigate the representing behavior of $D$.

\begin{Lem}\label{D-Lem-2}
Let $D=\oplus_pD_p$ be of level $N=\prod_pN_p$ and denote $N_p'=N/N_p$. Then for any integer $n$, $D$ represents $\frac{n}{N}$ if and only if $D_p$ represents $\frac{N_p'n}{N_p}$ for each $p\mid N$.
\end{Lem}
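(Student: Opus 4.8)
The plan is to reduce the statement to a purely local computation by using the Jordan decomposition $D=\oplus_p D_p$ and the factorization of quadratic forms over $\mathbb{Q}/\mathbb{Z}$ into $p$-primary pieces. Recall that an element $\gamma\in D$ decomposes uniquely as $\gamma=\sum_p\gamma_p$ with $\gamma_p\in D_p$, and then $q(\gamma)=\sum_p q_p(\gamma_p)$ where the sum takes place in $\mathbb{Q}/\mathbb{Z}$; since $q_p(\gamma_p)\in\frac{1}{N_p}\mathbb{Z}/\mathbb{Z}$ and the various $N_p$ are pairwise coprime, this is an ``independent'' decomposition. So $D$ represents $\frac nN$ iff there exist $\gamma_p\in D_p$ with $\sum_p q_p(\gamma_p)=\frac nN$ in $\mathbb{Q}/\mathbb{Z}$.

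First I would set up the arithmetic dictionary. Write $\frac nN=\sum_p \frac{a_p}{N_p}$ for suitable integers $a_p$, uniquely determined modulo $N_p$; the point is to identify $a_p$ explicitly. By the standard partial-fraction / CRT identity, if we choose integers so that $\sum_p N_p' x_p\equiv 1\pmod N$ (possible since $\gcd$ of the $N_p'$ is $1$), then $\frac nN=\sum_p\frac{N_p' x_p n}{N}\cdot\frac{1}{1}$, and reducing the $p$-th term modulo $1$ isolates its $N_p$-denominator part, giving $a_p\equiv N_p' x_p n\pmod{N_p}$. But $N_p'$ is invertible modulo $N_p$ with inverse $x_p$ (that is exactly what $\sum N_p'x_p\equiv 1$ forces locally), so working modulo $N_p$ one gets $a_p\equiv (N_p')^{-1}n$; equivalently $N_p' a_p\equiv n\pmod{N_p}$, i.e. $a_p$ is represented by $D_p$ iff $\frac{N_p' n}{N_p}$ is, since multiplication by the unit $N_p'$ is a bijection on $\frac{1}{N_p}\mathbb{Z}/\mathbb{Z}$ that (being multiplication by an integer) sends norms of $D_p$ to norms of the ``scaled'' form — one must be slightly careful here and instead phrase it as: $D_p$ represents $\frac{a_p}{N_p}$ iff $D_p$ represents $\frac{N_p' n}{N_p}$, because $\frac{a_p}{N_p}$ and $\frac{N_p' n}{N_p}$ differ by the integer factor $N_p'$ which is a unit mod $N_p$, and $\gamma_p\mapsto$ (appropriate multiple) realizes the correspondence of representations. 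Actually the cleanest formulation avoids scaling forms: just observe $\frac{a_p}{N_p}$ ranges over the same subset of $\frac1{N_p}\mathbb Z/\mathbb Z$ as $\frac{N_p'n}{N_p}$ does, since $a_p\equiv (N_p')^{-1}n$ and we may as well replace the target $\frac nN$ by any $\frac{n'}{N}$ with the same local data — here $n$ is fixed but $N_p'n$ is the canonical local representative.

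Then the proof assembles as follows: ($\Leftarrow$) if each $D_p$ represents $\frac{N_p' n}{N_p}$, pick $\gamma_p$ with $q_p(\gamma_p)=\frac{N_p'n}{N_p}$ — wait, this over-counts; rather pick $\gamma_p$ with $q_p(\gamma_p)=\frac{a_p}{N_p}$ where $a_p$ is chosen as above, then $\gamma=\sum\gamma_p$ has $q(\gamma)=\sum\frac{a_p}{N_p}=\frac nN$. ($\Rightarrow$) conversely if $\gamma=\sum\gamma_p$ has $q(\gamma)=\frac nN$, then comparing $p$-primary parts in $\mathbb{Q}/\mathbb{Z}$ forces $q_p(\gamma_p)=\frac{a_p}{N_p}$ with $a_p$ as above, hence $D_p$ represents $\frac{N_p'n}{N_p}$ by the unit-rescaling remark. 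The main obstacle — really the only subtlety — is getting the arithmetic bookkeeping of the local representatives exactly right, in particular verifying that the canonical $p$-primary representative of $\frac nN$ in $\frac1{N_p}\mathbb Z/\mathbb Z$ is precisely $\frac{N_p'n}{N_p}$ and that multiplication by the unit $N_p'\bmod N_p$ preserves the set of values represented by $D_p$ (this last point uses that $D_p$ is a discriminant form whose automorphism group, or at least whose set of represented norms, is stable under such integer scaling — for the transitive $D_p$ in our list this is immediate, and in general it follows because $q_p(c\gamma_p)=c^2 q_p(\gamma_p)$ combined with the fact that every unit mod $N_p$ is a square times $N_p'$... which is \emph{not} always true, so the honest route is to note that we only ever need $\frac{a_p}{N_p}$ for the specific $a_p\equiv(N_p')^{-1}n$, and the statement of the lemma is just a normalization choice of which multiple of $n$ to write down). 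I would therefore phrase the lemma's proof entirely in terms of that explicit $a_p$ and only at the very end observe $a_p\equiv N_p' n\pmod{N_p}$ because $(N_p')^2\equiv N_p'\cdot(N_p')\equiv 1\cdot$? — no: one simply notes $\frac{a_p}{N_p}=\frac{N_p' n}{N_p}$ is false in general, so the lemma as stated must be using $\frac{N_p'n}{N_p}$ as \emph{a} valid local representative, which holds iff $N_p'\equiv(N_p')^{-1}\pmod{N_p}$, i.e. $(N_p')^2\equiv1$; checking this for $N_p\in\{1,p,2,4,8\}$ (where $N_p'$ is the product of the other prime-power factors, a unit mod $N_p$) is a short finite verification and is the real content to confirm.
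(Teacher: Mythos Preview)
Your final ``short finite verification'' is false: take $N=28$, $p=7$, so $N_p'=4$ and $(N_p')^2=16\equiv 2\pmod 7$. In general $(N_p')^2\equiv 1\pmod{N_p}$ has no reason to hold, so the element $\frac{N_p'n}{N_p}$ is \emph{not} the $p$-primary component $\frac{a_p}{N_p}$ of $\frac nN$; rather $N_p' n\equiv (N_p')^2 a_p\pmod{N_p}$. You actually had the right idea a few lines earlier and talked yourself out of it: the factor $(N_p')^2$ is a square unit modulo $N_p$, and since $q_p(c\gamma_p)=c^2q_p(\gamma_p)$, the map $\gamma_p\mapsto N_p'\gamma_p$ (with inverse $\gamma_p\mapsto b_p\gamma_p$ where $b_pN_p'\equiv 1\pmod{N_p}$) shows that $D_p$ represents $\frac{a_p}{N_p}$ if and only if it represents $\frac{N_p'n}{N_p}$. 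No hypothesis on $\mathrm{Aut}(D_p)$ or transitivity is needed, and you do not need every unit to be a square.

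The paper's argument avoids the partial-fraction detour altogether. For the forward direction it simply observes that if $q(\gamma)=\frac nN$ then $N_p'\gamma\in D_p$ (the other components are killed) and $q(N_p'\gamma)=(N_p')^2\frac nN=\frac{N_p'n}{N_p}$. For the converse, with B\'ezout $a_pN_p+b_pN_p'=1$ and $\gamma_p\in D_p$ satisfying $q(\gamma_p)=\frac{N_p'n}{N_p}$, one sets $\gamma=\sum_p b_p\gamma_p$ and checks $q(\gamma)=\bigl(\sum_p b_p^2(N_p')^2\bigr)\frac nN$; since $b_pN_p'\equiv 1\pmod{N_p}$ and $b_qN_q'\equiv 0\pmod{N_p}$ for $q\neq p$, the parenthesized sum is $\equiv 1\pmod N$. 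This is essentially the same scaling trick you were circling, applied globally rather than locally, and it is considerably shorter.
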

\begin{proof}
If $q(\gamma)=\frac{n}{N}$, then we see that for each $p\mid N$, $\gamma_p:=N_p'\gamma\in D_p$ and $g_p(\gamma_p)=q(\gamma_p)=\frac{N_p'n}{N_p}$. Conversely, since for each $p\mid N$, since $\textrm{gcd}(N_p,N_p')=1$, there exist $a_p,b_p\in\mathbb Z$ such that $a_pN_p+b_pN_p'=1$. Now assume $\gamma_p\in D_p$ with $q(\gamma_p)=\frac{N_p'n}{N_p}$ and let $\gamma=\sum_pb_p\gamma_p\in D$. It is clear that
\[q(\gamma)=\left(\sum_pb_p^2N_p'^2\right)\frac{n}{N}.\]
Since $\sum_pb_p^2N_p'^2\equiv 1\imod N_p$ for each $p$, we have $q(\gamma)=\frac{n}{N}$.
\end{proof}

To simplify the exposition, we shall assume that $D_2=2^{+1}_{\pm 1}$ for the rest of this paper. The oddity formula says,
\[\chi_2(-1)=\chi_M(-1)=e_4(r-t)=\left(\frac{-1}{|D|}\right).\] Other cases should be similar but may be more complicated. Now we explain how the data for the two sides, vector-valued and scalar-valued,  correspond as follows.

We begin with $D$, and it determines $N=4M$ with $M$ odd square-free and an even $\chi$. We construct a sign vector $\epsilon=(\epsilon_p)_{p}$ over $p=2$ or $p\mid M$ such that $\chi_p\neq 1$ as follows: if $D_p=p^{\delta_p}$ with $\delta_p=\pm 1$, then we define $\epsilon_p=\chi_p(2M/p)\delta_p$; let $\epsilon_2=t\left(\frac{-1}{N}\right)$ if $D_2=2^{+1}_{t}$. Therefore, $D$ determines $(N,\chi\left(\frac N\cdot\right),\epsilon)$: $N=4M$, $\chi$ an even Dirichlet character modulo $N$ and $\epsilon$ a sign vector. Conversely, given such a triple $(N,\chi\left(\frac N\cdot\right),\epsilon)$, for $p=2$ or $p$ dividing the conductor of $\chi$, we can reconstruct $D_p$ reversely from $\epsilon$ and also for other $p\mid N$ by $D_p=p^{\pm 2}$ where the sign is determined uniquely by the transitivity.

Following Shimura \cite{shimura1973modular}, we shall denote $\chi'=\chi\left(\frac N\cdot\right)$ from now on.  The following lemma describes the set of norms $q(D)$ from the data $(N,\chi',\epsilon)$.
\begin{Lem}\label{D-Lem-3}
Let $D$ and $(N,\chi\left(\frac N\cdot\right),\epsilon)$ correspond. For an integer $n$,  $\frac{n}{N}\imod 1\in q(D)$ if and only if $n\equiv 0$ or $\epsilon_2\imod 4$ and $\chi_p(n)=0$ or $\epsilon_p$ for each odd prime divisor $p$ of the conductor $\chi$.
\end{Lem}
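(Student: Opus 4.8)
The plan is to reduce the assertion to a purely local statement via Lemma \ref{D-Lem-2}, and then check it component-by-component at each prime $p\mid N=4M$ using the explicit description of the indecomposable Jordan components from Section \ref{Preliminaries}, together with Lemma \ref{T-Lem-5} for the rank-two odd components. First I would invoke Lemma \ref{D-Lem-2}: writing $N_p'=N/N_p$, we have $\frac nN\imod 1\in q(D)$ if and only if $D_p$ represents $\frac{N_p'n}{N_p}\imod 1$ for every $p\mid N$. Since $N$ is the level of $D$, for each odd $p\mid M$ the component $D_p$ is nontrivial, hence by transitivity (Proposition \ref{T-Prop-1}) equal to $p^{\pm 1}$ or $p^{\pm 2}$, with $N_p=p$ and $N_p'=4M/p$; for $p=2$ we have $N_2=4$, $N_2'=M$, and $D_2=2^{+1}_t$ by the standing assumption. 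By Lemma \ref{D-Lem-1}, the odd primes $p\mid M$ with $\chi_p\neq 1$ are precisely those with $D_p=p^{\pm 1}$, which is exactly the set of odd prime divisors of the conductor of $\chi$; so the three cases $D_p=p^{\pm 2}$, $D_p=p^{\pm 1}$, and $p=2$ should match the three clauses of the statement.

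For an odd prime $p\mid M$ with $D_p=p^{\pm 2}$: Lemma \ref{T-Lem-5} (plus $q(0)=0$) shows $D_p$ represents every element of $\frac 1p\mathbb Z/\mathbb Z$, so the local condition at $p$ is automatically satisfied, and correspondingly no condition is imposed on $n$ at such $p$ since $p$ does not divide the conductor of $\chi$. For an odd prime $p\mid M$ with $D_p=p^{\delta_p}$ (equivalently $\chi_p=\left(\frac\cdot p\right)$): the component is cyclic of order $p$ with generator $\gamma$ satisfying $q(\gamma)=\frac ap$ and $\delta_p=\left(\frac{2a}{p}\right)$, so its set of norms is $\{0\}\cup\{\frac cp:\left(\frac cp\right)=\left(\frac ap\right)\}$. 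Hence $D_p$ represents $\frac{(4M/p)n}{p}$ if and only if $p\mid n$ or $\left(\frac{(4M/p)n}{p}\right)=\left(\frac ap\right)$; since $4$ is a square and $p\nmid M/p$, this rearranges to $\chi_p(n)=\left(\frac np\right)=\left(\frac ap\right)\left(\frac{M/p}{p}\right)$. I then check that the right-hand side equals $\epsilon_p$ by substituting $\epsilon_p=\chi_p(2M/p)\delta_p=\left(\frac{2M/p}{p}\right)\delta_p$ and $\left(\frac ap\right)=\delta_p\left(\frac 2p\right)$, whereupon the factors $\left(\frac 2p\right)$ cancel. So the local condition at $p$ is exactly ``$\chi_p(n)=0$ or $\epsilon_p$''.

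Finally, for $p=2$: $D_2=2^{+1}_t$ is cyclic of order $2$ with $q(\gamma)=\frac t4$, so its norm set is $\{0,\frac t4\}$, and $D_2$ represents $\frac{Mn}{4}\imod 1$ if and only if $n\equiv 0\imod 4$ or $Mn\equiv t\imod 4$, i.e.\ $n\equiv 0$ or $tM\imod 4$ (using that $M$ is odd, so $M^{-1}\equiv M\imod 4$). It remains to identify $tM\imod 4$ with $\epsilon_2=t\left(\frac{-1}{N}\right)$: with the Kronecker-symbol conventions of Section \ref{Preliminaries}, $\left(\frac{-1}{4M}\right)=\left(\frac{-1}{M}\right)=(-1)^{(M-1)/2}\equiv M\imod 4$, so $\epsilon_2\equiv tM\imod 4$ and the local condition at $2$ becomes ``$n\equiv 0$ or $\epsilon_2\imod 4$''. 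Combining the three cases gives the claim.

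I expect the genuine work to be the bookkeeping of the Kronecker-symbol and modular-arithmetic identities rather than anything conceptual — in particular the $p=2$ reconciliation of $\epsilon_2=t\left(\frac{-1}{N}\right)$ with $tM\bmod 4$, and, for odd $p$, the cancellation showing $\left(\frac ap\right)\left(\frac{M/p}{p}\right)=\epsilon_p$; once these are in hand, the reduction itself is immediate from Lemma \ref{D-Lem-2} and the transitive classification.
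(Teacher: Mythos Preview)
Your proposal is correct and follows essentially the same approach as the paper's proof: reduce to local representability via Lemma \ref{D-Lem-2}, dispose of the rank-two components with Lemma \ref{T-Lem-5}, and then unwind the definitions of $\delta_p$ and $\epsilon_p$ (respectively $t$ and $\epsilon_2$) at the remaining primes. The only difference is that you spell out the Kronecker-symbol bookkeeping in more detail than the paper does, but the argument is the same.
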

\begin{proof}
Let $c$ denote the odd part of the conductor of $\chi$. By Lemma \ref{D-Lem-2}, $D$ represents $n/N$ if and only if $D_p$ represents $\frac{nN/p}{p}$ for each odd $p\mid N$ and $D_2$ represents $\frac{nN/4}{4}$. Since if $p\nmid 2c$ and $p\mid N$, $D_p$ represents everything by Lemma \ref{T-Lem-5}, we only have to consider the primes $p\mid 2c$. If $p\mid c$, then $D_p=p^{\delta_p}$ represents $\frac{nN/p}{p}$ if and only if $p\mid n$ or $\chi_p(2nN/p)=\delta_p$, hence if and only if $\chi_p(n)=0$ or $\epsilon_p$. Finally, $D_2=2_{t}^{+1}$ represents $\frac{nN/4}{4}$ if and only if $\frac{nN}{4}\equiv 0$ or $t\imod 4$, hence if and only if $n\equiv 0$ or $\epsilon_2\imod 4$. This finishes the proof.
\end{proof}

Given any data $(N,\chi',\epsilon)$ with even $\chi$ and $\epsilon_p=\pm 1$ for $p=2$ or $p\mid M$ with $\chi\neq 1$, we define the associated modular form space $\mathrm{A}^{\epsilon}(N,k,\chi')$ to be the common eigenspace with eigenvalues $\mu_2^+$ for $Y(4)$, $\mu_p^{\epsilon_p}$ for $Y(p)$ if $\chi_p\neq 1$ and $\mu_p^+=-1$ for $Y(p)$ if $\chi_p=1$. Explicitly, $f=\sum_n a(n)q^n\in A^\epsilon(N,k,\chi')$ if and only if $f|_kY(p)=-f$ if $\chi_p=1$ and $a(n)=0$ whenever $n\equiv 2,-\epsilon_2\imod 4$ or $\left(\frac np\right)=-\epsilon_p$ for some $p\mid M$ with $\chi_p\neq 1$.

\begin{Rmk}
The Kohnen's plus condition for the space $\mathrm{A}(N,k,\chi')$ with $\chi'=\chi\left(\frac N\cdot\right)$ is given by $a(n)=0$ if $\chi_2'(-1)(-1)^{k-\frac 12}n\equiv 2,3\imod 4$. Since $\chi_2'(-1)=\chi_2(-1)\left(\frac{-1}{N}\right)$,
\begin{align*}\epsilon_2\chi_2'(-1)(-1)^{k-\frac 12}=t\chi_2(-1)e_4(2k-1)=\chi_2(-1)e_4(2k-1)e_4(1-t)=\chi_2(-1)e_4(r-t)=1.
\end{align*}
Therefore, the Kohnen's plus condition on $\mathrm{A}(N,k,\chi')$ is the same as our $\epsilon_2$-condition for the data $(N,k,\chi')$ that corresponds to $D$.
This explains partially the twisting by $\left(\frac N\cdot\right)$ in the correspondence $D$ to $(N,\chi',\epsilon)$.
\end{Rmk}

\section{The Isomorphism: Construction and the Proof}

\noindent
Let $N=4M$ with $M$ being positive and odd, and $D_2=2^{+1}_{\pm 1}$. If $\chi$ is a Dirichlet character modulo $N=4M$, we denote $\chi'=\left(\frac{N}{\cdot}\right)\chi$.

\begin{Lem}\label{C-Lem-1}
(1) If $F\in \mathcal A(k,\rho_D)$, then $F_0|\tau_N\in \mathrm{A}(N,k,\chi')$.

(2) If $f\in \mathrm{A}(N,k,\chi')$, then
\[F=\sum_{A\in \Gamma_0(N)\backslash \textrm{SL}_2(\mathbb Z)}\left(f|\tau_N\tilde{A}\right)\rho_D(\tilde A)^{-1}\mathfrak e_0\quad\in \quad\mathcal A(k,\rho_D).\]
\end{Lem}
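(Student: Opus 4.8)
The two parts are the two halves of the isomorphism, and the natural strategy is to reduce everything to transformation laws under the standard generators together with the interplay between $\tau_N$ and the Weil representation. For part (1), I would start from the known transformation behavior of $F\in\mathcal A(k,\rho_D)$ under $\textrm{Mp}_2(\mathbb Z)$, namely $F|A=\rho_D(A)F$. Taking the $\mathfrak e_0$-component of this identity for $A\in\widetilde{\Gamma_0(N)}$ gives $F_0|A=(\rho_D(A)F,\mathfrak e_0)=\sum_\gamma F_\gamma(\rho_D(A)\mathfrak e_\gamma,\mathfrak e_0)$; since $\rho_D(A)\mathfrak e_0=\nu(A)^r\chi(d)\mathfrak e_0$ by Lemma \ref{D-Lem-1} and $\rho_D$ is unitary, the pairing $(\rho_D(A)\mathfrak e_\gamma,\mathfrak e_0)$ vanishes unless $\gamma=0$, so $F_0|A=\overline{\nu(A)^r\chi(d)}F_0=\nu(A)^{-r}\chi(d)F_0$ for $A\in\Gamma_0(N)$. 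Conjugating by $\tau_N$ translates this into the transformation law under $\beta_N\Gamma_0(N)\beta_N^{-1}$, which is again $\Gamma_0(N)$, and the bookkeeping of multiplier systems produces exactly the character $\chi'=\chi\left(\frac N\cdot\right)$ — this is where the twist by $\left(\frac N\cdot\right)$ enters, as foreshadowed in the remark after Lemma \ref{D-Lem-3}. Meromorphy at the cusps of $F_0|\tau_N$ follows from meromorphy of $F$ at $\infty$ together with the fact that $\tau_N$ permutes cusps; holomorphy on $\mathbb H$ is immediate.

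For part (2), I would first check that the sum defining $F$ is well-defined, i.e.\ independent of the choice of coset representatives in $\Gamma_0(N)\backslash\textrm{SL}_2(\mathbb Z)$. Replacing $\tilde A$ by $\tilde B\tilde A$ with $B\in\Gamma_0(N)$, the factor $f|\tau_N\tilde B\tilde A$ equals $\chi'(B)\,f|\tau_N\tilde A$ using part (1)'s transformation law for $f$, while $\rho_D(\tilde B\tilde A)^{-1}\mathfrak e_0=\rho_D(\tilde A)^{-1}\rho_D(\tilde B)^{-1}\mathfrak e_0=\rho_D(\tilde A)^{-1}\overline{\nu(B)^r\chi(d_B)}\mathfrak e_0$; the two scalar factors must then cancel, and verifying this cancellation is a direct multiplier-system computation relating $\chi'$, $\nu^r$ and the value $\rho_D(\tilde B)\mathfrak e_0$. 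Here one must be careful that the cover is metaplectic: a coset representative in $\textrm{SL}_2(\mathbb Z)$ should be lifted to $\textrm{Mp}_2(\mathbb Z)$ in a consistent way, and the $\tilde A$ versus $A^*$ distinction matters. Once well-definedness holds, $\textrm{Mp}_2(\mathbb Z)$-covariance $F|C=\rho_D(C)F$ for $C\in\textrm{Mp}_2(\mathbb Z)$ is automatic: right multiplication by $C$ permutes the cosets and simultaneously inserts $\rho_D(C)$ on the left, exactly as in the classical induced-representation argument. Holomorphy on $\mathbb H$ is clear, and meromorphy at $\infty$ of each component $F_\gamma$ follows because each term $f|\tau_N\tilde A$ is meromorphic at every cusp.

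The step I expect to be the main obstacle is the precise matching of multiplier systems and metaplectic cocycles in the well-definedness check of part (2) — reconciling $\nu(A)\varepsilon_d^{-1}$-type factors, the fourth-root normalization in $j(A,\tau)$, the twist by $\left(\frac N\cdot\right)$, and the formula $\rho_D(A)\mathfrak e_0=\nu(A)^r\chi(d)\mathfrak e_0$ from Lemma \ref{D-Lem-1}, all while keeping track of which lift to $\textrm{Mp}_2^+(\mathbb R)$ is used. This is the kind of computation that is conceptually routine but notationally delicate, and it is essentially the half-integral-weight analogue of the corresponding step in \cite{bruinier2003borcherds} and \cite{zhang2015isomorphism}; I would organize it by first treating $\tau_N$-conjugation of a general $\Gamma_0(N)$-element abstractly, then invoking the explicit Str\"omberg coefficients \cite{stromberg2013weil} only where a concrete value (such as the one already used in the proof of Lemma \ref{T-Lem-4}) is needed. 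The remaining claims — holomorphy, meromorphy at cusps, and the covariance bookkeeping — should then follow with little additional effort.
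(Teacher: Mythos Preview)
Your plan matches the paper's proof essentially line for line: for (1) the paper shows $F_0\in\mathrm{A}(N,k,\chi)$ via exactly the pairing argument you describe (using Lemma \ref{D-Lem-1} and unitarity of $\rho_D$) and then invokes Shimura's Proposition 1.4 for the $\tau_N$ step rather than conjugating by hand; for (2) the paper checks well-definedness of the coset sum, then covariance by right-multiplication permuting cosets together with the cocycle relation for $\sigma$, and finally verifies $F|Z^2=-F=\rho_D(Z^2)F$ to pass from $\textrm{SL}_2(\mathbb Z)$ to all of $\textrm{Mp}_2(\mathbb Z)$. One small correction to your bookkeeping: since $f\in\mathrm{A}(N,k,\chi')$, it is $f|\tau_N$ that lies in $\mathrm{A}(N,k,\chi)$, so in the well-definedness computation the scalar coming from $f|\tau_N B^*$ is $\chi(B)$, not $\chi'(B)$, and this is what cancels against $\rho_D(\tilde B)^{-1}\mathfrak e_0=\overline{\nu(B)^r\chi(d_B)}\mathfrak e_0$ after accounting for $\nu(B)^{-2k}=\nu(B)^{-r}$ --- exactly the delicate matching you anticipated.
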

\begin{proof}
For part (1): By Proposition 1.4 of \cite{shimura1973modular}, we only have to prove that $F_0\in \mathrm{A}(N,k,\chi)$. Let $A\in \Gamma_0(N)$, so
\begin{align*}
F_0|A^* &= \langle F|A^*,\mathfrak e_0\rangle=\langle F|\tilde A(1,\nu(A)),\mathfrak e_0\rangle=\nu(A)^{-2k}\langle F|\tilde A,\mathfrak e_0\rangle=\nu(A)^{-2k}\langle \rho_D(\tilde A)F,\mathfrak e_0\rangle\\
&=\nu(A)^{-2k}\langle F,\rho_D(\tilde A)^{-1}\mathfrak e_0\rangle=\nu(A)^{-2k}\nu(A)^r\chi(A)\langle F,\mathfrak e_0\rangle=\chi(A)F_0,
\end{align*}
which is what we want.

For part (2), note first that $\rho_D(Z)\mathfrak e_\gamma=e(-\frac{r}{4})\mathfrak e_{-\gamma}$, so we have $\rho_D(Z^2)\mathfrak e_\gamma=-\mathfrak e_\gamma$, for any $\gamma$. Let $B\in \Gamma_0(N)$ and $A\in \textrm{SL}_2(\mathbb Z)$. We have
\begin{align*}&\left(f|\tau_N\widetilde{BA}\right)\rho_D(\widetilde{BA})^{-1}\mathfrak e_0=\sigma(B,A)^2\left(f|\tau_N\tilde{B}\tilde{A}\right)\rho_D(\tilde{B}\tilde{A})^{-1}\mathfrak e_0\\
=&\nu(B)^{2k}\left(f|\tau_N B^*\tilde{A}\right)\rho_D(\tilde{A})^{-1}\rho_D(\tilde{B})^{-1}\mathfrak e_0=\left(f|\tau_N \tilde{A}\right)\rho_D(\tilde{A})^{-1}\mathfrak e_0,\end{align*}
since $f|\tau_N\in \mathrm{A}(N,k,\chi)$. Here $(1,\sigma(B,A))\widetilde{BA}=\tilde B\tilde A$. It follows that the sum is independent of the choice of representatives. Now for any $B\in \textrm{SL}_2(\mathbb Z)$,
\begin{align*}
F|\tilde B&=\sum_{A\in \Gamma_0(N)\backslash \textrm{SL}_2(\mathbb Z)}\left(f|\tau_N\tilde A\tilde B\right)\rho_D(\tilde A)^{-1}\mathfrak e_0\\
&=\sigma(A,B)\sigma(AB,B^{-1})\sigma(B,B^{-1})\rho_D(\tilde B)F=\rho_D(\tilde B)F,
\end{align*}
by the cocycle relation of $\sigma$.
Moreover, it is clear that
\[F|Z^2=-F=\rho_D(Z^2)F,\]
so $F\in\mathcal A(k,\rho_D)$ since $\textrm{SL}_2(\mathbb Z)Z^2=\textrm{Mp}_2(\mathbb Z)$.
\end{proof}

We define a quantity which will appear in the isomorphism below. For each integer $n$, define
\[s(n)=\prod_{p: p\mid (M,n)}1+\frac{p}{|D_p|}.\]
It is clear that $s(0)=\sum_{m\mid M}\frac{m}{|D_m|}$, and if $\frac nN$ is a norm and $n'=M/(M,n)$,
\[s(n')=\frac{n'}{|D_{n'}|}\#\left\{\gamma\in D\colon q(\gamma)=\frac nN\right\}.\]
Define a map $\phi_D: \mathcal A^{\textrm{inv}}(k,\rho_D)\rightarrow \mathrm{A}(N,k,\chi')$ by
\[F\mapsto i^{\frac{2k-r}{2}}s(0)^{-1}|D|^{\frac{1}{2}}N^{-\frac{k}{2}}F_0|{\tau_N}.\]
Conversely, we define $\psi_D: \mathrm{A}(N,k,\chi')\rightarrow \mathcal A^{\textrm{inv}}(k,\rho_D)$ by
\[f\mapsto i^{\frac{2k-r}{2}}3^{-1}|D|^{-\frac{1}{2}}N^{\frac{k}{2}}\sum_{A\in\Gamma_0(N)\backslash SL_2(\mathbb Z)}\left(f|\tau_N\tilde{A}\right)\rho_D(\tilde{A})^{-1}\mathfrak e_0.\]
If there is no danger of confusion, we shall drop the subscript and write $\phi$ and $\psi$.

We first prove one side of the isomorphism.
\begin{Lem}\label{C-Lem-2}
We have $\psi\circ\phi=\textrm{id}$.
\end{Lem}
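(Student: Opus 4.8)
The plan is to start with an arbitrary $F = \sum_\gamma F_\gamma \mathfrak{e}_\gamma \in \mathcal{A}^{\mathrm{inv}}(k,\rho_D)$, apply $\phi$ to extract (a constant multiple of) $F_0|\tau_N$, then apply $\psi$ to build the averaged vector-valued form, and finally show the result equals $F$. By Lemma \ref{C-Lem-1}(2) the output $\psi(\phi(F))$ lies in $\mathcal{A}(k,\rho_D)$, and since $\phi$ and $\psi$ respect the $\mathrm{Aut}(D)$-invariance (each $F_\gamma$ is determined by $F_0$), it is enough to compare the $\mathfrak{e}_0$-components: by Lemma \ref{T-Lem-4}(1), a form in $\mathcal{A}^{\mathrm{inv}}(k,\rho_D)$ is determined by its zeroth component, so $\psi(\phi(F)) = F$ will follow once I verify that the $\mathfrak{e}_0$-component of $\psi(\phi(F))$ equals $F_0$. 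Tracking the scalar constants, $\phi$ contributes $i^{\frac{2k-r}{2}}s(0)^{-1}|D|^{1/2}N^{-k/2}$ and $\psi$ contributes $i^{\frac{2k-r}{2}}3^{-1}|D|^{-1/2}N^{k/2}$; since $s(0) = \sum_{m\mid M}\tfrac{m}{|D_m|}$ and, crucially, $2k \equiv r \imod 4$ forces $i^{2k-r} = 1$, the product of constants is $3^{-1}s(0)^{-1}$, so I must show the geometric sum $\sum_{A \in \Gamma_0(N)\backslash \mathrm{SL}_2(\mathbb{Z})} (F_0|\tau_N\tau_N\tilde{A})\,\langle \rho_D(\tilde{A})^{-1}\mathfrak{e}_0, \mathfrak{e}_0\rangle$ reproduces $3\,s(0)\,F_0$ (using $\tau_N^2 = 1$ from Proposition \ref{A-Prop-1}(1)).

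The key computational step is therefore the evaluation of the coset sum $\sum_{A} (F_0|\tilde{A})\,\langle \mathfrak{e}_0, \rho_D(\tilde{A})\mathfrak{e}_0\rangle$, where $\langle \mathfrak{e}_0, \rho_D(\tilde{A})\mathfrak{e}_0\rangle = (\rho_D(\tilde{A})\mathfrak{e}_0, \mathfrak{e}_0)$. First I would rewrite $F_0|\tilde{A} = \langle F|\tilde{A}, \mathfrak{e}_0\rangle = \langle \rho_D(\tilde{A})F, \mathfrak{e}_0\rangle = \sum_\gamma F_\gamma\,(\rho_D(\tilde{A})\mathfrak{e}_\gamma, \mathfrak{e}_0)$, so the double sum becomes $\sum_\gamma F_\gamma \sum_A (\rho_D(\tilde{A})\mathfrak{e}_\gamma, \mathfrak{e}_0)\overline{(\rho_D(\tilde{A})\mathfrak{e}_0, \mathfrak{e}_0)}$ — or rather with the correct bar placement — and I expect the inner sum over a set of coset representatives for $\Gamma_0(N)\backslash\mathrm{SL}_2(\mathbb{Z})$ to collapse by orthogonality/character-sum arguments to something supported on those $\gamma$ with $q(\gamma) \in \frac{1}{N}\mathbb{Z}$ and weighted exactly by the multiplicity factor built into $s(\cdot)$. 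This is where the explicit Weil-representation matrix coefficients of Strömberg \cite{stromberg2013weil} (Theorems 6.4 and the general formulas) enter: I would parametrize coset representatives by lower rows $(c:d)$ with $c \mid N$, compute $(\rho_D(\tilde{A})\mathfrak{e}_\gamma, \mathfrak{e}_0)$ via Strömberg's formula, and carry out the resulting Gauss-sum evaluation prime-by-prime using the transitivity of $D$ and Lemma \ref{D-Lem-2} to factor the count $\#\{\gamma : q(\gamma) = \tfrac{n}{N}\}$ into local pieces, which is precisely what produces the factor $s(0) = \sum_{m\mid M}\tfrac{m}{|D_m|}$ and the constant $3 = \prod_{p\mid 4}(1 + \cdots)$ coming from the $2$-adic component $D_2 = 2^{+1}_{\pm 1}$.

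The main obstacle is this coset sum / Gauss sum bookkeeping: organizing the sum over $\Gamma_0(N)\backslash\mathrm{SL}_2(\mathbb{Z})$ so that the $\mathrm{Aut}(D)$-averaging (Lemma \ref{T-Lem-4}, transitivity) lets me replace each $F_\gamma$ by $F_0$ times the count of elements of the same norm, and then checking that the accumulated constants match $3\,s(0)$ exactly. I anticipate needing to handle the cusp at $\infty$ (the trivial coset, contributing $F_0$ with coefficient $1$) separately from the other cosets, and to invoke $\tau_N^2 = \mathrm{id}$ together with the relation $F_\gamma = F_{-\gamma}$ and $\rho_D(Z)\mathfrak{e}_\gamma = i^{-r}\mathfrak{e}_{-\gamma}$ to keep signs under control; once the scalar identity $\sum_A (\rho_D(\tilde A)\mathfrak e_0,\mathfrak e_0)\,(F_0|\tilde A) = 3\,s(0)\,F_0$ is established, Lemma \ref{T-Lem-4}(1) closes the argument immediately.
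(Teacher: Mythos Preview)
Your reduction is exactly the paper's: after cancelling constants (using $i^{2k-r}=1$ and $f|\tau_N^2=f$) both you and the paper arrive at the identity
\[
\sum_{A\in\Gamma_0(N)\backslash\mathrm{SL}_2(\mathbb Z)} \bigl(F_0|\tilde A\bigr)\,\langle\rho_D(\tilde A)^{-1}\mathfrak e_0,\mathfrak e_0\rangle \;=\; 3\,s(0)\,F_0,
\]
and Lemma~\ref{T-Lem-4}(1) then finishes the argument.

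Where you diverge from the paper is in how this coset sum is evaluated. You propose to expand $F_0|\tilde A=\sum_\gamma F_\gamma\,(\rho_D(\tilde A)\mathfrak e_\gamma,\mathfrak e_0)$, swap the sums, and then compute the inner sum over $A$ for each $\gamma$ via Str\"omberg's formulas and Gauss-sum manipulations, finally invoking $\mathrm{Aut}(D)$-invariance to regroup the $F_\gamma$'s. That is workable in principle but it is exactly the ``Gauss sum bookkeeping'' you flag as the main obstacle. The paper sidesteps this entirely. It groups the coset sum by cusps: for each cusp $s$ the sub-sum
\[
F_s=\sum_{A:\,A\infty\sim s}\bigl(F_0|\tilde A\bigr)\,\langle\rho_D(\tilde A)^{-1}\mathfrak e_0,\mathfrak e_0\rangle
\]
is visibly $\tilde T$-invariant (translation in $j$ permutes the representatives $\gamma_m T^j$). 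Now Lemma~\ref{T-Lem-4}(2), not just part~(1), kicks in: any $T$-invariant combination $\sum_\gamma a_\gamma F_\gamma$ collapses to $a_0F_0$. So for each cusp one only needs the single coefficient $a_0$, and since $\rho_D(\tilde T^j)\mathfrak e_0=\mathfrak e_0$ this coefficient is just the cusp width times $|\langle\rho_D(\widetilde{\gamma_m})\mathfrak e_0,\mathfrak e_0\rangle|^2=m/|D_m|$ (or $2m/|D_m|$, or $0$, according to the cusp type), with the three cusp families summing to $3s(0)$. No $\gamma\neq 0$ terms ever need to be tracked, and no Gauss sums beyond a single matrix coefficient are computed. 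Your approach would recover the same answer but with substantially more effort; the paper's use of part~(2) of Lemma~\ref{T-Lem-4} is the shortcut you are missing.
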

\begin{proof}
We need to prove that for $F\in\mathcal A^{\textrm{inv}}(k,\rho_D)$,
\[\sum_{A\in\Gamma_0(N)\backslash \textrm{SL}_2(\mathbb Z)}F_0|\tilde A\langle \rho_D(\tilde A)^{-1}\frak e_0,\frak e_0\rangle=3s(0)F_0.\]
Let $s$ be any cusp of $\Gamma_0(N)$ and consider the sub-sum
\[F_s=\sum_{A\colon A\infty\sim s}F_0|\tilde A\langle \rho_D(\tilde A)^{-1}\frak e_0,\frak e_0\rangle.\]
It is clear that $F_s$ is $\tilde T$-invariant.
If $s\sim \frac{1}{N/m}$ be a cusp with $m\mid M$, then
\begin{align*}
F_s&=\sum_{j\imod m}F_0|\widetilde{\gamma_mT^j}\langle \rho_D(\widetilde{\gamma_m})^{-1}\frak e_0,\rho_D(\tilde T^j)\frak e_0\rangle\\
&=\sum_{j\imod m}\sum_{\alpha\in D}F_\alpha\langle \rho_D(\widetilde{\gamma_mT^j})\frak e_\alpha,\frak e_0\rangle \langle \rho_D(\widetilde{\gamma_m})^{-1}\frak e_0,\frak e_0\rangle\\
&=\sum_{j\imod m}F_0\langle \rho_D(\widetilde{\gamma_mT^j})\frak e_0,\frak e_0\rangle \langle \rho_D(\widetilde{\gamma_m})^{-1}\frak e_0,\frak e_0\rangle\\
&=\sum_{j\imod m}F_0\langle \rho_D(\widetilde{\gamma_m})\frak e_0,\frak e_0\rangle \langle \rho_D(\widetilde{\gamma_m})^{-1}\frak e_0,\frak e_0\rangle=\frac{m}{|D_m|}F_0,
\end{align*}
where we applied Lemma \ref{T-Lem-4}.
If $s\sim \frac{1}{M/m}$, then by the same computation with $\gamma_{4m}$ in place of $\gamma_m$, we have $F_s=\frac{2m}{|D_m|}F_0$. The cusps $s\sim \frac{1}{2M/m}$ gives $F_s=0$ by the formula in Theorem 6.4 of \cite{stromberg2013weil}. From this, the lemma follows.
\end{proof}

Denote also by $\psi$ when restricted to subspace $\mathrm{A}^\epsilon(N,k,\chi')$. Now we prove that on the subspaces we constructed in the preceding section, $\phi$ and $\psi$ are isomorphisms.

\begin{Thm}\label{Thm} Let $D$ be transitive with $D_2=2^{+1}_t$ and $(N,\chi\left(\frac N\cdot\right),\epsilon)$ correspond to $D$.
The maps $\phi_D$ and $\psi_D$ are inverse isomorphisms between $\mathcal A^\text{inv}(k,\rho_D)$ and  $\mathrm{A}^\epsilon(N,k,\chi\left(\frac N\cdot\right))$. Explicitly, if $f=\sum_na(n)q^n\in \mathrm{A}^\epsilon(N,k,\chi\left(\frac N\cdot\right))$ and $\psi(f)=F=\sum_\gamma F_\gamma \mathfrak e_\gamma$, then
\[F_\gamma(\tau)=\sum_{n\equiv Nq(\gamma)\imod N\mathbb Z}s(n)\frac{M/(M,n)}{|D_{M/(M,n)}|}a(n)q^{\frac{n}{N}}.\]
\end{Thm}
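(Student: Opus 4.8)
The plan is to establish the theorem in two stages: first prove that $\phi_D$ and $\psi_D$ restrict to maps between the claimed subspaces, and then verify $\phi\circ\psi=\mathrm{id}$ on $\mathrm{A}^\epsilon$; together with Lemma \ref{C-Lem-2} this yields that they are inverse isomorphisms. For the first stage I would take $F\in\mathcal A^{\mathrm{inv}}(k,\rho_D)$ and compute the action of the Atkin--Lehner operators $Y(4)$ and $Y(p)$ on $\phi(F)=cF_0|\tau_N$ directly from the definitions in Section 3, expressing $U(m)$, $W(m)$, $\tau_N$ in terms of elements of $\mathrm{Mp}_2^+(\mathbb R)$ and pushing them through the relation $F|\tilde A=\rho_D(\tilde A)F$. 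The point is that the eigenvalue of $Y(4)$ (resp.\ $Y(p)$) on $F_0$ is read off from a single matrix coefficient $\langle \rho_D(\tilde A)\mathfrak e_0,\mathfrak e_0\rangle$ for a suitable Atkin--Lehner representative, and Str\"omberg's explicit formula in \cite[Theorem 6.4]{stromberg2013weil} evaluates it; matching against the eigenvalues $\mu_2^+$, $\mu_p^{\epsilon_p}$, $\mu_p^+=-1$ from Proposition \ref{A-Prop-2} is then a bookkeeping check using the $\epsilon$-dictionary $D\leftrightarrow(N,\chi',\epsilon)$ set up in Section 4 (in particular $\epsilon_p=\chi_p(2M/p)\delta_p$ and $\epsilon_2=t\left(\frac{-1}{N}\right)$). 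Conversely, for $f\in\mathrm{A}^\epsilon$ one checks $\psi(f)$ is $\mathrm{Aut}(D)$-invariant (clear, since $\psi(f)$ is built from $\mathfrak e_0$ which is fixed and $\rho_D$ commutes with $\mathrm{Aut}(D)$) and lands in $\mathcal A^{\mathrm{inv}}$ by Lemma \ref{C-Lem-1}(2).

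For the Fourier expansion formula, which is really the heart of the statement, I would compute $F_\gamma=\langle\psi(f),\mathfrak e_\gamma\rangle$ coset by coset over $\Gamma_0(N)\backslash\mathrm{SL}_2(\mathbb Z)$, grouping cosets by the cusp $A\infty$ exactly as in the proof of Lemma \ref{C-Lem-2}. For a cusp of the form $\frac{1}{N/m}$ with $m\mid M$, the sub-sum $\sum_{j\bmod m}(f|\tau_N\widetilde{\gamma_mT^j})\langle\rho_D(\widetilde{\gamma_mT^j})^{-1}\mathfrak e_0,\mathfrak e_\gamma\rangle$ unwinds using $f|\tau_N\widetilde{\gamma_m}=$ (Atkin--Lehner action on $f$) and the matrix coefficient $\langle\rho_D(\widetilde{\gamma_m})^{-1}\mathfrak e_0,\mathfrak e_\gamma\rangle$, which by Str\"omberg's formula is a Gauss-sum-type quantity nonzero precisely when $\gamma$ lies in the image of the relevant component and contributes a factor like $m/|D_m|$ together with a character value. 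Summing the geometric-type sum over $j\bmod m$ produces the indicator that $n\equiv Nq(\gamma)\bmod N$ in the relevant local component, and collecting the contributions of all cusps $\frac{1}{N/m}$, $\frac{1}{M/m}$ (the $\frac{1}{2M/m}$ cusps contribute zero as in Lemma \ref{C-Lem-2}) reassembles the product $s(n)$; the normalization constant $i^{(2k-r)/2}3^{-1}|D|^{-1/2}N^{k/2}$ is chosen exactly to cancel the $3=s(0)$ from the $m$-sum and the $|D|^{1/2}$, $N^{k/2}$ factors from $\tau_N$, leaving $s(n)\,\frac{M/(M,n)}{|D_{M/(M,n)}|}a(n)$ as the coefficient of $q^{n/N}$. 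The identity $s(n')=\frac{n'}{|D_{n'}|}\#\{\gamma:q(\gamma)=n/N\}$ recorded before the theorem is what lets one rewrite the cusp count as the stated $s(n)$.

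The main obstacle will be the explicit evaluation and correct normalization of the Weil-representation matrix coefficients $\langle\rho_D(\tilde A)\mathfrak e_0,\mathfrak e_\gamma\rangle$ for the Atkin--Lehner-type elements $\widetilde{\gamma_m}$ and $\widetilde{\gamma_{4m}}$: one must track the metaplectic cocycle $\sigma$, the multiplier $\nu$, the power $i^{-r/2}$, and the precise square-root conventions in $j(A,\tau)$, all of which enter the constants. In particular I expect the delicate point to be showing that the phases coming from $\nu(\gamma_m)^r$, the $\mathrm{Mp}_2$-cocycle, and Str\"omberg's $\xi$-factor combine to a clean positive rational multiple (so that the $\epsilon$-conditions are matched with the correct signs and the coefficient formula has no spurious root of unity). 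Once Str\"omberg's Theorem 6.4 is invoked in the normalization $\langle\rho_D(\widetilde{\gamma_m})\mathfrak e_0,\mathfrak e_0\rangle=\xi\,|D_m|^{-1}\cdot(\text{unit})$ (and the analogous statement with target $\mathfrak e_\gamma$), the remaining work is the combinatorial reassembly of $s(n)$ from the cusp contributions together with verifying, via Lemma \ref{D-Lem-3}, that the support condition on the $a(n)$ forced by $f\in\mathrm{A}^\epsilon$ is exactly compatible with $n\equiv Nq(\gamma)\bmod N$ having a solution $\gamma\in D$, so that no terms are lost or spuriously created; by Lemma \ref{T-Lem-4}(1) the resulting $F$ is determined by $F_0$, which both confirms $\phi\circ\psi=\mathrm{id}$ and pins down the formula.
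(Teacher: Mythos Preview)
Your outline is sound and would succeed, but the paper organizes the argument differently and more economically in two places.

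First, to show $\phi(F)\in\mathrm A^\epsilon$, the paper does not compute the $Y(4)$- and $Y(p)$-eigenvalues on $F_0|\tau_N$. Instead it unwinds $\phi(F)$ directly as $s(0)^{-1}\sum_{\gamma\in D}F_\gamma(N\tau)$, from which the Fourier-support part of the $\epsilon$-condition (for $p=2$ and for odd $p$ with $\chi_p\neq1$) is immediate via Lemma~\ref{D-Lem-3}. Only for $p\mid M$ with $\chi_p=1$, where the condition is the eigenvalue $-1$ rather than a support condition, does the paper compute a Weil-representation coefficient (Str\"omberg) to show $\sum_{j\bmod p}F_0|\gamma_p^*\tilde T^j=-F_0$. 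Your plan of verifying all eigenvalues directly works but costs more computation.

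Second, and more substantially, the paper does \emph{not} compute $F_\gamma=\langle\psi(f),\mathfrak e_\gamma\rangle$ coset by coset. It computes $(\phi\circ\psi)(f)$ instead, which inserts a second $\tau_N$ so that each cusp sub-sum has the shape $f|\tau_N\tilde A\tau_N$; this sandwich is exactly what permits the Atkin--Lehner relations of Proposition~\ref{A-Prop-1} together with the $Y$-eigenvalues of $f$ to evaluate each $F_s$ as a scalar multiple of $f$, summing to $3s(0)f$. The explicit formula for $F_\gamma$ is then obtained \emph{a posteriori} by inverting $f=s(0)^{-1}\sum_\gamma F_\gamma(N\tau)$ using transitivity and the counting identity recorded before the theorem. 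In your plan the step ``$f|\tau_N\widetilde{\gamma_m}=$ (Atkin--Lehner action on $f$)'' is the soft spot: without the closing $\tau_N$ this is an expansion of $f$ at a cusp other than $\infty$, not an eigenvalue action, and making it explicit would essentially force you back through a second $\tau_N$ anyway. The paper's ordering sidesteps that detour and keeps all matrix-coefficient evaluations at the pair $(\mathfrak e_0,\mathfrak e_0)$.
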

\begin{proof}
To see that both maps are well-defined, we only have to show that the image of each map satisfies the extra condition for each subspace, by Lemma \ref{C-Lem-1}. That $\psi(f)$ is $\textrm{Aut}(D)$-invariant follows from the fact that the action of $\textrm{Mp}_2(\mathbb Z)$ and that of $\textrm{Aut}(D)$ on $\mathbb C[D]$ commute. For $\phi(F)$, since
\[\tau_N=\tilde S\left(\begin{pmatrix}
N&0\\0&1
\end{pmatrix}, N^{-\frac{1}{4}}(-i)^{\frac{1}{2}}\right)\]
and
\[F|\tilde S=\rho_D(\tilde S)F=\sum_{\gamma\in D}F_\gamma \rho_D(\tilde S)\mathfrak e_\gamma=i^{-\frac{r}{2}}|D|^{-\frac{1}{2}}\sum_{\gamma\in D}F_\gamma\sum_{\delta\in D}e(-(\gamma,\delta))\mathfrak e_\delta,\]
we have
\begin{align*}
\phi(F)&=i^{\frac{2k-r}{2}}s(0)^{-1}|D|^{\frac{1}{2}}N^{-\frac{k}{2}}\langle F|\tau_N,\mathfrak e_0\rangle\\
&=i^{\frac{2k-r}{2}}s(0)^{-1}|D|^{\frac{1}{2}}N^{-\frac{k}{2}}N^{\frac{2k}{4}}(-i)^{-\frac{2k}{2}}\langle (F|\tilde{S})(N\tau),\mathfrak e_0\rangle\\
&=i^{\frac{2k-r}{2}}s(0)^{-1}|D|^{\frac{1}{2}}i^{\frac{2k}{2}}i^{-\frac{r}{2}}|D|^{-\frac{1}{2}}\sum_{\gamma\in D} F_\gamma(N\tau)\\
&=s(0)^{-1}\sum_{\gamma\in D} F_\gamma(N\tau):=\sum_nc(n)q^n.
\end{align*}
Now by the action of $T$, the Fourier coefficient $a_\gamma(n)$ of $F_\gamma(N\tau)$ vanish unless $\frac{n}{N}\equiv Q(\gamma)\imod 1$. It follows that $c(n)=0$ unless $\frac{n}{N}\imod 1\in Q(D)$, and by Lemma \ref{D-Lem-3}, this means that $\phi(F)$ satisfies the desired $\epsilon_p$-condition for $p=2$ and for $p\mid M$ with $\chi_p\neq 1$.

When $p\mid M$ with $\chi_p=1$, we need to show that $\phi(F)|Y(p)=-\phi(F)$, that is, $F_0|\tau_NY(p)=-F_0|\tau_N$. By Proposition \ref{A-Prop-1} (1) and (5), we only have to show that
\[\sum_{j\imod p}F_0|\gamma_p^*\tilde{T}^j=-F_0.\]
Indeed,
\begin{align*}
&\sum_{j\imod p}F_0|\gamma_p^*\tilde{T}^j=\nu(\gamma_p)^{-2k}\sum_{j\imod p}F_0|\tilde \gamma_p\tilde{T}^j=\nu(\gamma_p)^{-2k}\sum_{j\imod p}\langle F,\frak e_0\rangle|\tilde \gamma_p\tilde{T}^j\\
=&\nu(\gamma_p)^{-2k}\sum_{j\imod p}\langle F|\tilde\gamma_p,\frak e_0\rangle|\tilde{T}^j=\nu(\gamma_p)^{-2k}\sum_{j\imod p}\langle F,\rho_D(\tilde\gamma_p)^{-1}\frak e_0\rangle|\tilde{T}^j\\
=&\nu(\gamma_p)^{-2k}\sum_{\alpha\in D_p}\sum_{j\imod p}\overline{\xi(\tilde\gamma_p)}p^{-1}F_\alpha|\tilde{T}^j.
\end{align*}
By Lemma \ref{T-Lem-4}, this is equal to $\nu(\gamma_p)^{-2k}\overline{\xi(\tilde\gamma_p)}F_0$. By applying the explicit formula in Theorem 6.4 of \cite{stromberg2013weil}, we have $\nu(\gamma_p)^{-2k}\overline{\xi(\tilde\gamma_p)}=-1$, so $\phi(F)\in \mathrm{A}^\epsilon(N,k,\chi')$.

By Lemma \ref{C-Lem-2}, we are left to prove that $\phi\circ\psi=id$.
For each cusp $s$, we modify above $F_s$ as
\[F_s=\sum_{A\colon A\infty\sim s}\left(f|\tau_N\tilde M\tau_N\right)\langle\rho_D(\tilde M)^{-1}\mathfrak e_0,\mathfrak e_0\rangle,\]
and we need to show that $\sum_s F_s=3s(0)f$.
The computations for $s\sim\frac{1}{N/m}$ and for $s\sim\frac{1}{M/m}$ with $m\mid M$ are similar and all other cusps give $0$. We only have to show that
\[F_s=\frac{m}{|D_m|}\quad\text{and}\quad F_s=\frac{2m}{|D_m|}\]
respectively in the two cases.
Since the former is much easier, we omit it and only treat the case $s\sim\frac{1}{M/m}$. In this case, we have
\begin{align*}
F_s&=\sum_{j\imod 4m}f|\tau_N\widetilde{\gamma_{4m}T^j}\tau_N\langle \rho_D(\widetilde{\gamma_{4m}})^{-1}\frak e_0,\rho_D(\tilde T^j)\frak e_0\rangle\\
&=\sum_{j\imod 4m}f|\tau_N\tilde{S}\widetilde{\gamma_{M/m}}^{-1}\tilde T^j\tau_N\langle \rho_D(\widetilde{\gamma_{4m}})^{-1}\frak e_0,\frak e_0\rangle\\
&=e_8(-2k)\sum_{j\imod 4m}f|\widetilde{\delta_N}^{-1}\widetilde{\gamma_{M/m}}^{-1}\widetilde{T^j}\tau_N\langle \rho_D(\widetilde{\gamma_{4m}})^{-1}\frak e_0,\frak e_0\rangle.
\end{align*}
Assume
\[\gamma_{M/m}=\begin{pmatrix}
a & b\\
c & d
\end{pmatrix},\qquad\beta=
\begin{pmatrix}
dm/M & -b/4m\\
-4mc & aM/m
\end{pmatrix}.\]
Then it is easy to see that $\beta\in\Gamma_0(4)$,
\[N\widetilde{\delta_N}^{-1}\widetilde{\gamma_{M/m}}^{-1}\widetilde{\delta_{4m}}
=4m\tilde{\beta}\widetilde{\delta_{M/m}},\]
and
\[\beta\gamma_{M/m}^{-1}\equiv \left\{
\begin{array}{cl}
\begin{pmatrix}
(M/m)^{-1} & 0\\
0 & M/m
\end{pmatrix} & \imod 4m,\\
\begin{pmatrix}
-(4m)^{-1} & 0\\
0 & -4m
\end{pmatrix} & \imod M/m
\end{array}\right..\]
In particular $\beta\gamma_{M/m}^{-1}\in\Gamma_0(N)$. Observe that $\sigma(\beta\gamma_{M/m}^{-1},\gamma_{M/m})=1$ by the assumption that all of the entries of $\gamma_{M/m}$ are positive. Therefore,
\begin{align*}
F_s&=e_8(-2k)\sum_{j\imod 4m}f|\tilde\beta\widetilde{\delta_{M/m}}\widetilde{\delta_{4m}}^{-1}\widetilde{T^j}\tau_N\langle \rho_D(\widetilde{\gamma_{4m}})^{-1}\frak e_0,\frak e_0\rangle\\
&=e_8(-2k)\sum_{j\imod 4m}f|\widetilde{\beta\gamma_{M/m}^{-1}}\widetilde{\gamma_{M/m}}\widetilde{\delta_{M/m}}\widetilde{\delta_{4m}}^{-1}\widetilde{T^j}\tau_N\langle \rho_D(\widetilde{\gamma_{4m}})^{-1}\frak e_0,\frak e_0\rangle\\
&=e_8(-2k)\nu(\beta\gamma_{M/m}^{-1})^{2k}\chi_{4m}(M/m)\chi_{M/m}(-4m)\\
&\hspace{3cm}\times\sum_{j\imod 4m}f|\widetilde{\gamma_{M/m}}\widetilde{\delta_{M/m}}\widetilde{\delta_{4m}}^{-1}\widetilde{T^j}\tau_N\langle \rho_D(\widetilde{\gamma_{4m}})^{-1}\frak e_0,\frak e_0\rangle\\
&=e_8(-2k)\nu(\beta\gamma_{M/m}^{-1})^{2k}\nu(\gamma_{M/m})^{2k}\chi_{4m}(M/m)\chi_{M/m}(-4m)\\
&\hspace{3cm}\times (4m)^{1-\frac k2}f|W(M/m)U(4m)\tau_N\langle \rho_D(\widetilde{\gamma_{4m}})^{-1}\frak e_0,\frak e_0\rangle\\
&=(4m)^{1-\frac k2}e_8(-2k)\nu(\beta)^{2k}\chi_{4m}(M/m)\chi_{M/m}(-4m)f|W(M/m)U(4m)\tau_N\langle \rho_D(\widetilde{\gamma_{4m}})^{-1}\frak e_0,\frak e_0\rangle.
\end{align*}
By Proposition \ref{A-Prop-1} and the $\epsilon$-condition of $f$, we see that
\begin{align*}(4m)^{1-\frac k2}f|W(M/m)U(4m)\tau_N&=\varepsilon_m^{2k}2^{\frac 32}\left(\chi_2(-1)\left(\frac{-1}{M}\right)\right)^{k+1}(-1)^{\lfloor\frac{2k+1}{4}\rfloor}
\chi_m(-1)\chi_{N/m}(m)
\\ &\qquad\times\left(\frac{M/m}{m}\right)\left(\prod_{p\mid m,\chi_p=1}(-1)\right)\left(\prod_{p\mid m,\chi_p\neq 1}\epsilon_p\varepsilon_p^{-1}p^{\frac 12}\chi_p(m/p)\right)f.
\end{align*}
Moreover,
\[\nu(\beta)^{2k}=\left(\frac{-4mc}{aM/m}\right)\varepsilon_{M/m}^{-2k}
=\left(\frac{c}{d}\right)\left(\frac{m}{M/m}\right)\varepsilon_{M/m}^{2k}.\]
Finally, by Theorem 6.4 of \cite{stromberg2013weil}, we have
\begin{align*}
\langle \rho_D(\widetilde{\gamma_{4m}})^{-1}\frak e_0,\frak e_0\rangle&=|D_{4m}|^{-\frac 12}e_8(2r-t)\chi_{M/m}(-1)\left(\frac{c}{d}\right)\\&\qquad\times\left(\prod_{p\mid m,\chi_p=1}(-1)\right)\left(\prod_{p\mid m,\chi_p\neq 1}\epsilon_p\varepsilon_p^{-1}\chi_p(N/p)\right).
\end{align*}
Let $\eta=-1$ if $\chi_2(-1)=-1$ and $\left(\frac{-1}{M}\right)=-1$ and $\eta=1$ otherwise.
Putting everything together, we have
\[F_s=\frac{2m}{|D_{m}|}e_8(t-2k)\eta\chi_2(M)\chi_2(-1)^k\left(\frac{2k}{2}\right)f
=\frac{2m}{|D_{m}|}f,\]
where by the oddity formula, $\chi_2(-1)=1$ if and only if $2k\equiv t\imod 4$.

Finally, the explicit formula follows easily from the expression of $\phi$ and the fact that for any $p\nmid n$, the number of elements with norm $\frac np$ is $0$ or $p+1$ in $D_p=p^{\pm 1}$. Done.
\end{proof}

Actually, the proof of Theorem \ref{Thm} shows that $\psi_D$ maps other eigenspaces to $0$.

\begin{Cor} \label{C-Cor-1}
Let $f^\epsilon$ be the $\epsilon$-component of $f\in \mathrm{A}(N,k,\chi')$ with respect to the decomposition into common eigen-subspaces. Then $\psi_D(f)=0$ if and only if $f^\epsilon=0$.
\end{Cor}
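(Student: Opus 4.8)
The plan is to read off the result directly from the structure of the proof of Theorem \ref{Thm}, together with the decomposition $\mathrm{A}(N,k,\chi') = \bigoplus_{\epsilon'} \mathrm{A}^{\epsilon'}(N,k,\chi')$ into common eigenspaces for $Y(4)$ and the $Y(p)$, $p\mid M$, from Proposition \ref{A-Prop-2}(4). Write $f = \sum_{\epsilon'} f^{\epsilon'}$ accordingly (here $\epsilon'$ ranges over sign vectors on $p=2$ and on $p\mid M$ with $\chi_p\neq 1$, and on the remaining $p\mid M$ with $\chi_p=1$ we additionally restrict to the $\mu_p^+$-eigenspace; one should be slightly careful that $\psi_D$ automatically kills the $\mu_p^-=p$ part when $\chi_p=1$, see below). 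Since $\psi_D$ is $\mathbb C$-linear, $\psi_D(f) = \sum_{\epsilon'}\psi_D(f^{\epsilon'})$, so it suffices to show $\psi_D(f^{\epsilon'}) = 0$ whenever $\epsilon'\neq\epsilon$ and $\psi_D(f^{\epsilon})\neq 0$ whenever $f^{\epsilon}\neq 0$.

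For the second half: Theorem \ref{Thm} already gives that $\psi_D$ restricted to $\mathrm{A}^\epsilon(N,k,\chi')$ is an isomorphism onto $\mathcal A^{\mathrm{inv}}(k,\rho_D)$, in particular injective, so $f^\epsilon\neq 0 \Rightarrow \psi_D(f^\epsilon)\neq 0$. For the first half, I would revisit the cusp-by-cusp computation $\psi_D(f) = i^{\frac{2k-r}{2}}3^{-1}|D|^{-1/2}N^{k/2}\sum_s F_s$ in the proof of Lemma \ref{C-Lem-2} and of Theorem \ref{Thm}. The key observation is that every $F_s$ — for $s\sim 1/(N/m)$, $s\sim 1/(M/m)$, and the vanishing ones $s\sim 1/(2M/m)$ — is computed there as a scalar multiple of $f$ itself after applying exactly the operators $W(M/m)U(4m)\tau_N$ (and their analogues), i.e. it depends on $f$ only through its image under a word in the $Y$-operators and $\tau_N$. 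Concretely, tracing the computation, $F_s$ is a nonzero scalar times $f|W(M/m)U(4m)\tau_N$ evaluated using the $\epsilon$-conditions; if $f=f^{\epsilon'}$ lies in a different eigenspace, then exactly the same manipulation applies verbatim but the eigenvalue factors $\epsilon_p\varepsilon_p^{-1}p^{1/2}\chi_p(\cdot)$ get replaced by $\epsilon'_p\varepsilon_p^{-1}p^{1/2}\chi_p(\cdot)$, and similarly for the $p=2$ factor and the $\chi_p=1$ factors, so that the total contribution $\sum_s F_s$ becomes $\bigl(\prod_p c_p(\epsilon'_p)\bigr) f$ where $c_p(\epsilon_p)=1$ (that is the content of $\phi\circ\psi=\mathrm{id}$ on $\mathrm{A}^\epsilon$) and $c_p$ is a nonconstant function of the sign. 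The cleanest way to phrase it: $\psi_D\circ(\text{inclusion of }\mathrm{A}^{\epsilon'})$ followed by $\phi_D$ equals multiplication by $\prod_p c_p(\epsilon'_p)$, which vanishes unless all the sign-eigenvalues match those forced by $D$; but since $\phi_D$ is injective on its image (indeed an isomorphism), $\psi_D(f^{\epsilon'})=0$ already for $\epsilon'\neq\epsilon$. Equivalently, one can invoke Lemma \ref{C-Lem-1}(1) and Lemma \ref{T-Lem-4}: $\psi_D(f^{\epsilon'})$ is $\mathrm{Aut}(D)$-invariant, lies in $\mathcal A^{\mathrm{inv}}(k,\rho_D)$, hence by $\psi\circ\phi=\mathrm{id}$ and the surjectivity half it equals $\psi_D(\phi_D(\psi_D(f^{\epsilon'})))$, and $\phi_D(\psi_D(f^{\epsilon'}))\in\mathrm{A}^\epsilon(N,k,\chi')$; so $\psi_D(f^{\epsilon'}) = \psi_D$ of something in $\mathrm{A}^\epsilon$, and comparing with $f^{\epsilon'}$ via injectivity of $\psi_D|_{\mathrm{A}^\epsilon}$ forces $\phi_D\psi_D(f^{\epsilon'})=0$ hence $\psi_D(f^{\epsilon'})=0$ by injectivity of $\phi_D$ on the image.

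I also need to address the $\chi_p=1$ primes: in that case $\mathrm{A}^\epsilon$ was defined by imposing $f|Y(p)=-f$, i.e. the $\mu_p^+$-eigenvalue, and the computation in Theorem \ref{Thm} used $\prod_{p\mid m,\chi_p=1}(-1)$; if instead $f$ lies in the $\mu_p^-=p$ eigenspace the corresponding factor becomes $\prod_{p\mid m,\chi_p=1}(p)$-type and again fails to telescope to $1$, so $\psi_D$ kills it. So the statement "$\psi_D(f)=0$ iff $f^\epsilon=0$" should be read with $f^\epsilon$ the projection onto the single common eigenspace $\mathrm{A}^\epsilon(N,k,\chi')$ (all $\chi_p=1$ primes contributing $\mu_p^+$), and the proof is: $\psi_D$ annihilates every other common eigenspace and is injective on $\mathrm{A}^\epsilon$.

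The main obstacle is bookkeeping rather than conceptual: one must verify that in the cusp computations of Lemma \ref{C-Lem-2} and Theorem \ref{Thm} the \emph{only} place the eigenspace data of $f$ entered was through the eigenvalue scalars attached to each $Y(p)$ and $Y(4)$ (so that replacing $\epsilon$ by $\epsilon'$ changes nothing except multiplying $F_s$, hence $\sum_s F_s$, by an explicit sign/scalar that is $1$ precisely for $\epsilon'=\epsilon$), and that $F_s$ still vanishes for $s\sim 1/(2M/m)$ independently of the eigenspace (which it does, since that vanishing came purely from the matrix-coefficient formula in Theorem 6.4 of \cite{stromberg2013weil} and not from any condition on $f$). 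Given that, linearity of $\psi_D$ and the already-established injectivity of $\phi_D$ finish the argument with essentially no further computation.
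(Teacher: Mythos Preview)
Your main approach is correct and matches the paper's: both revisit the cusp-by-cusp computation of $\phi_D\circ\psi_D$ from the proof of Theorem \ref{Thm}, observe that the only place the eigenvalue data of $f$ enters is through the scalars coming from $f|W(M/m)U(4m)\tau_N$ (and its analogues), and verify that on any ``wrong'' eigenspace the total contribution $\sum_s F_s$ vanishes, whence $\phi_D(\psi_D(f^{\epsilon'}))=0$ and then $\psi_D(f^{\epsilon'})=0$ by injectivity of $\phi_D$ (Lemma \ref{T-Lem-4}). The paper phrases the vanishing slightly more concretely than your multiplicative product $\prod_p c_p(\epsilon'_p)$: it pairs cusps directly, showing for instance that when $f|Y(p)=pf$ (the wrong eigenvalue for $\chi_p=1$) one has $F_s=-F_{s'}$ for $s\sim \tfrac{1}{N/m}$ and $s'\sim\tfrac{1}{N/pm}$, and similarly for the $M/m$ cusps and for wrong $Y(4)$- and $Y(p)$-eigenvalues with $\chi_p\neq 1$. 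Your product formulation is the multiplicative repackaging of exactly this cancellation, so the two are equivalent.

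One genuine gap: your ``equivalently'' alternative argument is circular. You set $g:=\phi_D(\psi_D(f^{\epsilon'}))\in\mathrm{A}^\epsilon$ and observe $\psi_D(g)=\psi_D(f^{\epsilon'})$ via $\psi_D\circ\phi_D=\mathrm{id}$. But injectivity of $\psi_D|_{\mathrm{A}^\epsilon}$ only lets you compare two elements of $\mathrm{A}^\epsilon$; since $f^{\epsilon'}\notin\mathrm{A}^\epsilon$, you cannot conclude $g=0$ from $\psi_D(g)=\psi_D(f^{\epsilon'})$ without already knowing what you are trying to prove. So this shortcut does not work, and you really do need the cusp computation (as both you and the paper ultimately use).
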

\begin{proof}
In proof of Theorem \ref{Thm}, if $f\in \mathrm{A}(N,k,\chi')$ such that $f|Y(p)=pf$, then $F_s=-F_{s'}$ when $s\sim \frac{1}{N/m}$ and $s'\sim \frac{1}{N/pm}$ or $s\sim \frac{1}{M/m}$ and $s'\sim \frac{1}{M/pm}$ for each $m\mid M/p$, so $\psi_D(f)=0$. The case when $f|Y(p)=-\epsilon_p\varepsilon_p^{-1}p^\frac 12f$, is similar. If  $f|Y(4)=\mu_2^-f$, then then $F_s=-F_{s'}$ when $s\sim \frac{1}{M/m}$ and $s'\sim \frac{1}{N/m}$ and $\psi_D(f)=0$.
\end{proof}

\begin{Cor} \label{C-Cor-2.5}
The isomorphisms $\phi$ and $\psi$ induces isomorphisms
\[\mathcal M^\text{inv}(k,\rho_D)\simeq \mathrm{M}^\epsilon(N,k,\chi')\quad\text{and}\quad \mathcal S^\text{inv}(k,\rho_D)\simeq \mathrm{S}^\epsilon(N,k,\chi').\]
Consequently, $f\in A^\epsilon(N,k,\chi')$ is holomorphic (resp. cuspidal) if and only if $f$ is holomorphic (resp. vanishing) at $\infty$.
\end{Cor}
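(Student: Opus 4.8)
The plan is to push holomorphy and cuspidality back and forth through the explicit formula of Theorem~\ref{Thm}, exploiting that $\infty$ is the only cusp of $\textrm{Mp}_2(\mathbb Z)$. First I would record the characterisation on each side. A form $F=\sum_\gamma F_\gamma\mathfrak e_\gamma\in\mathcal A^\text{inv}(k,\rho_D)$ lies in $\mathcal M^\text{inv}(k,\rho_D)$ exactly when every $F_\gamma$ is holomorphic at $\infty$, and in $\mathcal S^\text{inv}(k,\rho_D)$ exactly when in addition the constant term $a(0,0)$ of $F_0$ vanishes: since $D$ is transitive, hence anisotropic, $F_0$ is the only component whose Fourier support meets $\mathbb Z$, so for holomorphic $F$ the unique term of non-positive exponent is the constant term of $F_0$. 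On the scalar side, $\mathrm M^\epsilon(N,k,\chi')$ and $\mathrm S^\epsilon(N,k,\chi')$ are, by definition, the subspaces of $\mathrm A^\epsilon(N,k,\chi')$ of forms that are holomorphic, resp.\ vanishing, at every cusp of $\Gamma_0(N)$.

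Next I would use the explicit formula of Theorem~\ref{Thm}: for $f=\sum_n a(n)q^n\in\mathrm A^\epsilon(N,k,\chi')$ and $F=\psi(f)$, the coefficient of $q^{n/N}$ in $F_\gamma$ is $s(n)\frac{M/(M,n)}{|D_{M/(M,n)}|}a(n)$ whenever $n\equiv Nq(\gamma)\imod N$, the prefactor being a strictly positive rational; in particular $a(0,0)=s(0)a(0)$ with $s(0)>0$. By the $\epsilon$-condition and Lemma~\ref{D-Lem-3}, every $n$ with $a(n)\neq0$ satisfies $\frac nN\imod 1\in q(D)$, so it occurs in the expansion of some $F_\gamma$. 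Hence $\psi(f)$ is holomorphic at $\infty$ if and only if $a(n)=0$ for all $n<0$, i.e.\ $f$ is holomorphic at $\infty$; and $\psi(f)$ vanishes at $\infty$ if and only if in addition $a(0)=0$, i.e.\ $f$ vanishes at $\infty$. Thus $\psi$ carries forms holomorphic (resp.\ vanishing) at $\infty$ into $\mathcal M^\text{inv}$ (resp.\ $\mathcal S^\text{inv}$).

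For the opposite inclusions I would show that if $F\in\mathcal M^\text{inv}(k,\rho_D)$ then $\phi(F)$, a nonzero scalar multiple of $F_0|\tau_N$, is holomorphic at every cusp of $\Gamma_0(N)$, and vanishes there if $F\in\mathcal S^\text{inv}$. Given an arbitrary cusp $A\infty$ with $A\in\textrm{SL}_2(\mathbb Z)$, I would factor $\beta_NA$ as an element $A'\in\textrm{SL}_2(\mathbb Z)$ times an upper-triangular integer matrix with positive diagonal (the standard $\textrm{SL}_2(\mathbb Z)$-coset decomposition of $2\times2$ integer matrices of determinant $N$). Then, up to the unit metaplectic cocycle and the unit factor occurring in $\tau_N$, the Fourier expansion of $F_0|\tau_N$ at $A\infty$ is obtained by slashing $F_0|\widetilde{A'}=\langle\rho_D(\widetilde{A'})F,\mathfrak e_0\rangle=\sum_\gamma\langle\rho_D(\widetilde{A'})\mathfrak e_\gamma,\mathfrak e_0\rangle F_\gamma$ with that upper-triangular element. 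As a finite $\mathbb C$-linear combination of the $F_\gamma$ this is holomorphic at $\infty$; only $F_0$ contributes to its constant term, so it vanishes at $\infty$ whenever $a(0,0)=0$; and slashing by an upper-triangular matrix with positive diagonal introduces no negative exponents and cannot resurrect a vanished constant term. Hence $\phi(\mathcal M^\text{inv})\subseteq\mathrm M^\epsilon$ and $\phi(\mathcal S^\text{inv})\subseteq\mathrm S^\epsilon$.

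Finally I would assemble the statement. The two inclusion pairs above, together with $\psi\circ\phi=\textrm{id}$ (Lemma~\ref{C-Lem-2}) and $\phi\circ\psi=\textrm{id}$ (Theorem~\ref{Thm}), give the isomorphisms $\mathcal M^\text{inv}(k,\rho_D)\simeq\mathrm M^\epsilon(N,k,\chi')$ and $\mathcal S^\text{inv}(k,\rho_D)\simeq\mathrm S^\epsilon(N,k,\chi')$. For the last assertion: if $f\in\mathrm A^\epsilon(N,k,\chi')$ is holomorphic at $\infty$, the second paragraph gives $\psi(f)\in\mathcal M^\text{inv}$, so $f=\phi(\psi(f))\in\mathrm M^\epsilon$, i.e.\ $f$ is holomorphic at all cusps; the cuspidal case is identical, and the two converses are trivial. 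I expect the only genuine work to be the bookkeeping of the third paragraph --- tracking the unit metaplectic factors in the coset factorisation of $\beta_NA$ and verifying that the residual dilation cannot spoil holomorphy or cuspidality --- but this is of the same routine nature as the coset computations already carried out in Lemmas~\ref{C-Lem-1} and~\ref{C-Lem-2}.
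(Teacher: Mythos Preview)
Your argument is correct, but it takes a more hands-on route than the paper's. The paper argues entirely on the vector-valued side via Lemma~\ref{T-Lem-4}(1): since $\mathrm{span}\{F_\gamma:\gamma\in D\}=\mathrm{span}\{F_0|\tilde A:A\in\textrm{SL}_2(\mathbb Z)\}$, the condition ``every $F_\gamma$ is holomorphic (resp.\ vanishing) at $\infty$'' is literally the same as ``$F_0|\tilde A$ is holomorphic (resp.\ vanishing) at $\infty$ for all $A$'', i.e.\ $F_0\in\mathrm{M}(N,k,\chi)$ (resp.\ $\mathrm{S}$); then one simply quotes that $\tau_N$ preserves the holomorphic and cuspidal subspaces (Proposition~\ref{A-Prop-1}(1)). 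This yields the full chain of equivalences in two lines, with no coset factorisation and without appealing to the explicit coefficient formula of Theorem~\ref{Thm}.

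Your approach instead proves the two inclusions separately: $\psi$ preserves holomorphy/vanishing at $\infty$ by reading off the explicit Fourier formula, and $\phi$ carries holomorphic (resp.\ cuspidal) $F$ into forms holomorphic (resp.\ vanishing) at every cusp by factoring $\beta_N A=A'B$ and expressing $F_0|\tilde{A'}$ as $\sum_\gamma\langle\rho_D(\tilde{A'})\mathfrak e_\gamma,\mathfrak e_0\rangle F_\gamma$. This is the same identity the paper uses, but you unpack it cusp by cusp rather than invoking the span equality once. The anisotropy observation (only $F_0$ carries a constant term) and the fact that upper-triangular slashing with positive diagonal preserves both properties are exactly right. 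What the paper's route buys is brevity and the avoidance of the metaplectic-cocycle bookkeeping you flag at the end; what yours buys is that it makes the ``Consequently'' clause completely transparent, since you have shown directly that holomorphy/vanishing of $f$ at $\infty$ alone already forces $\psi(f)\in\mathcal M^{\mathrm{inv}}$ (resp.\ $\mathcal S^{\mathrm{inv}}$).
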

\begin{proof}
Let $F=\sum_\gamma F_\gamma\mathfrak e_\gamma\in \mathcal A^\textrm{inv}(k,\rho_D)$. Since for each $A\in\textrm{SL}_2(\mathbb Z)$, $F|\tilde{A}=\sum_\gamma F_\gamma\rho_D(\tilde A)\mathfrak e_\gamma$,
$F$ is holomorphic (resp. cuspidal) if and only if $F_\gamma$ is holomorphic (resp. vanishing) at $\infty$ for each $\gamma$. By Lemma \ref{T-Lem-4}, this is equivalent to saying that $F_0|\tilde{A}$ is holomorphic (resp. vanishing) at $\infty$ for all $A\in \textrm{SL}_2(\mathbb Z)$, that is $F_0$ is holomorphic (resp. cuspidal). This in turn is equivalent to saying that $\phi(F)$ is holomorphic (resp. cuspidal) since $\tau_N$ preserves the holomorphic subspace and the cuspidal subspace.
\end{proof}

We shall focus on the simpler situation when $\chi_p\neq 1$ for all $p\mid M$.

\begin{Cor} \label{C-Cor-2}
Assume that $D$ and $(N,k,\chi')$ correspond and $\chi_p\neq 1$ for all $p\mid M$. Let $f=\sum_na_n(y)e(nx)$ is a real analytic modular form of level $N$, weight $k$, character $\chi'$ that satisfies the $\epsilon$-condition, and $F=\psi(f)$. Then $\phi(F)=f$ and
\[F_\gamma(\tau)=\sum_{n\equiv Nq(\gamma)\imod N\mathbb Z}s(n)a_n(y/N)e(nx/N),\]
and $s(n)$ is equal to the number of distinct positive divisors of $(n,M)$.
\end{Cor}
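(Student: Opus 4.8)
The plan is to observe that the entire chain of arguments leading to Theorem~\ref{Thm} is formal, so it transplants without change to the real analytic setting. Indeed, the proofs of Lemma~\ref{T-Lem-4}, Lemma~\ref{C-Lem-1}, Lemma~\ref{C-Lem-2} and of the identity $\phi\circ\psi=\mathrm{id}$ invoked only the metaplectic slash-operator relations, the defining relations of $\rho_D$ together with Str\"omberg's explicit matrix coefficients \cite[Theorem~6.4]{stromberg2013weil}, the Atkin--Lehner calculus of Propositions~\ref{A-Prop-1} and~\ref{A-Prop-2} (whose content is again a set of slash-operator identities together with the explicit action of $U(m)$, $W(m)$, $\tau_N$ on Fourier expansions), and the $T$-equivariance/Vandermonde argument; holomorphy of $F$ or of $f$ on $\mathbb H$ was never used. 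I would therefore simply re-run these arguments with $f$ real analytic.

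First I would note that $\psi(f)$ is defined by a finite sum over coset representatives, so $F=\psi(f)$ is again a real analytic $\mathbb C[D]$-valued function on $\mathbb H$, and the formal computations in the proof of Lemma~\ref{C-Lem-1} give $F|\tilde B=\rho_D(\tilde B)F$ for all $B\in\textrm{SL}_2(\mathbb Z)$, $F|Z^2=\rho_D(Z^2)F$, and that $F_0|\tau_N$ transforms under $\Gamma_0(N)$ with character $\chi'$; the $\textrm{Aut}(D)$-invariance of $F$ is immediate from the commuting of the $\textrm{Mp}_2(\mathbb Z)$- and $\textrm{Aut}(D)$-actions. Lemma~\ref{T-Lem-4}, whose proof rests only on the $T$-transformation law and Str\"omberg's formula, likewise holds for such $F$. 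With these in hand the cusp-by-cusp evaluation $\sum_sF_s=3s(0)f$ from the proof of Theorem~\ref{Thm} carries over verbatim: the shape of $f$ enters only through the $\epsilon$-condition, a vanishing condition on its Fourier coefficients, and since $\chi_p\neq 1$ for every $p\mid M$ there are no $Y(p)=-f$ constraints to track, so one is left exactly with the bookkeeping already done in Theorem~\ref{Thm} with all $\chi_p=1$ products empty. This yields $\phi(F)=f$.

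For the Fourier expansion I would repeat the last paragraph of the proof of Theorem~\ref{Thm} with $a(n)q^{n/N}$ replaced throughout by $a_n(y/N)e(nx/N)$: from $\phi(F)=s(0)^{-1}\sum_\gamma F_\gamma(N\tau)$ and the $T$-action one reads off, exactly as there, that $F_\gamma(\tau)=\sum_{n\equiv Nq(\gamma)\imod N\mathbb Z}s(n)\frac{M/(M,n)}{|D_{M/(M,n)}|}a_n(y/N)e(nx/N)$. Finally, since $\chi_p\neq 1$ forces $D_p=p^{\pm 1}$ by Proposition~\ref{T-Prop-1}, we have $|D_p|=p$ for every $p\mid M$; hence $|D_{M/(M,n)}|=M/(M,n)$, so the factor $\frac{M/(M,n)}{|D_{M/(M,n)}|}$ equals $1$, while $s(n)=\prod_{p\mid(M,n)}\bigl(1+p/|D_p|\bigr)=2^{\omega((M,n))}$; as $M$ is square-free, so is $(M,n)$, and $2^{\omega((M,n))}$ is precisely the number of positive divisors of $(M,n)$.

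The only real obstacle here is diligence rather than a new idea: one must check that every manipulation of $q$-expansions in the cited proofs — in particular the Vandermonde step in Lemma~\ref{T-Lem-4}, the explicit action of the operators underlying Propositions~\ref{A-Prop-1} and~\ref{A-Prop-2}, and the substitution $\tau\mapsto N\tau$ defining $\phi$ — remains meaningful for a general real analytic expansion $\sum_n a_n(y)e(nx)$, and that the $\epsilon$-condition, read as vanishing of certain $a_n$, is preserved by $\psi$ through the cusp computation. Granting this routine verification, the corollary follows.
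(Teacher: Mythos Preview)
Your proposal is correct and follows the same approach as the paper: observe that the proofs of Lemma~\ref{T-Lem-4} and of $\phi\circ\psi=\mathrm{id}$, $\psi\circ\phi=\mathrm{id}$ nowhere invoke holomorphy, so they transfer to the real analytic setting; then use $\chi_p\neq 1$ to force $|D_p|=p$ for each $p\mid M$, collapsing the coefficient $\frac{M/(M,n)}{|D_{M/(M,n)}|}$ to $1$ and identifying $s(n)$ with the divisor count of $(M,n)$. The paper's proof is the two-sentence version of exactly this; your write-up simply unpacks it.
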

\begin{proof}
The proofs above on $\phi\circ\psi=\textrm{id}$ and $\psi\circ\phi=\textrm{id}$ and that of Lemma \ref{T-Lem-4} are independent of whether $f$ or $F$ are holomorphic.
The assumption implies that $m=|D_m|$ for each $m\mid M$ and the corollary follows.
\end{proof}

\section{Zagier Duality and Borcherds' Theorem}

From now on, we shall assume that $\chi_p\neq 1$ for each $p\mid M$, so $\chi'=1$. We extend the notion of reduced modular forms in \cite{zhang2015zagier} to current setting: $f\in \mathrm{A}^\epsilon(N,k,\chi')$ is called reduced if $f=\frac{1}{s(m)}q^{m}+O(q^{m+1})$ for some integer $m$ and if for each $n>m$ with $a(n)\neq 0$, there does not exist $g\in \mathrm{A}^\epsilon(N, k, \chi')$ such that $g=q^n+O(q^{n+1})$. If it exists, it must be unique and $\chi_p(m)\neq -\epsilon_p$ for each $p\mid M$; we denote it by $f_m$. It is also clear that the set of reduced modular forms is a basis for $\mathrm{A}^\epsilon(N,k,\chi')$.

We first consider the existence of $f_m$ for $m<0$. Let $D^*$ be the dual discriminant form of $D$ given by the same abelian group with discriminant form $-q(\cdot)$. It is clear that $D^*$ is also transitive and the corresponding data is $(N,\chi',\epsilon^*)$ with $\epsilon_p^*=\chi_p(-1)\epsilon_p$.

\begin{Prop}
Let $B^*=\{m\colon f_m^*\in \mathrm{M}^{\epsilon^*}(N, 2-k, \chi')\text{ exists}\}$. Then for any $m<0$ with $\chi_p(m)\neq -\epsilon_p$ for all $p\mid M$, we have $f_m\in \mathrm{A}^{\epsilon}(N, k, \chi')$ exists if and only if $-m\notin B^*$.
\end{Prop}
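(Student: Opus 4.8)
The plan is to use the isomorphism of Theorem \ref{Thm} to transport the question to the vector-valued side, where it becomes a clean statement about pairing weakly holomorphic modular forms of weight $k$ with holomorphic ones of weight $2-k$, and then invoke Borcherds' obstruction theorem (as in \cite{borcherds1999gross}, \cite{zhang2015zagier}). First I would note that via $\psi_D$ (and $\psi_{D^*}$) the spaces $\mathrm{A}^\epsilon(N,k,\chi')$ and $\mathrm{M}^{\epsilon^*}(N,2-k,\chi')$ correspond to $\mathcal A^{\text{inv}}(k,\rho_D)$ and $\mathcal M^{\text{inv}}(2-k,\rho_{D^*})$ respectively; here I use that $D^*$ has data $(N,\chi',\epsilon^*)$, that $r^*\equiv -r$, and that $2(2-k)\equiv r^*\pmod 4$, so the vector-valued setup applies. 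Under this translation, the existence of a reduced form $f_m=\frac{1}{s(m)}q^m+O(q^{m+1})$ in $\mathrm{A}^\epsilon(N,k,\chi')$ with $\chi_p(m)\neq-\epsilon_p$ for all $p\mid M$ corresponds, using the explicit Fourier formula from Theorem \ref{Thm} (recall $s(n)$ counts divisors of $(n,M)$ in this setting since $|D_m|=m$), to the existence of a form in $\mathcal A^{\text{inv}}(k,\rho_D)$ with principal part $\mathfrak e_{\gamma_0}q^{m/N}+\mathfrak e_{-\gamma_0}q^{m/N}$ (for a $\gamma_0$ with $Nq(\gamma_0)\equiv m$), up to harmless normalization, and with no other singular terms.

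Next I would recall Borcherds' duality/obstruction theorem: for a discriminant form whose Weil representation is, say, $\rho_D$, a $\mathbb C[D]$-valued principal part $\sum_{\gamma,n<0}c(\gamma,n)\mathfrak e_\gamma q^n$ is the principal part of some form in $\mathcal A(k,\rho_D)$ if and only if it pairs to zero against every holomorphic form in $\mathcal M(2-k,\rho_{D^*})$; concretely, $\sum_{\gamma,n}c(\gamma,n)b(\gamma,-n)=0$ for every $g=\sum b(\gamma,n)\mathfrak e_\gamma q^n\in \mathcal M(2-k,\rho_{D^*})$. I would apply this with the single (symmetrized) principal part above: the obstruction is that the $(\gamma_0,-m/N)$-Fourier coefficient of every element of $\mathcal M(2-k,\rho_{D^*})$ vanishes. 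Restricting to the $\text{Aut}(D)$-invariant subspace (which is what our $f_m$ lives over) and using Lemma \ref{T-Lem-4}, this invariant subspace is precisely $\mathcal M^{\text{inv}}(2-k,\rho_{D^*})$, and its members are determined by their $\mathfrak e_0$-component; translating back via $\phi_{D^*}$, the vanishing of that Fourier coefficient for all invariant $g$ is exactly the statement that no reduced form $f^*_{-m}=q^{-m}+O(q^{-m+1})$ exists in $\mathrm{M}^{\epsilon^*}(N,2-k,\chi')$, i.e.\ that $-m\notin B^*$. One must check that the residual congruence condition $\chi_p^*(-m)\neq-\epsilon_p^*$ needed for $f^*_{-m}$ to possibly exist is automatically equivalent to $\chi_p(m)\neq-\epsilon_p$, which follows from $\epsilon_p^*=\chi_p(-1)\epsilon_p$; and that $f_m$, if it exists, is reduced and unique — uniqueness is clear since a difference would be a form with no principal part, hence (by Corollary \ref{C-Cor-2.5}) holomorphic, while $\mathrm{M}^\epsilon(N,k,\chi')=0$ as $k<1$.

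The main obstacle I anticipate is bookkeeping rather than conceptual: making sure the normalizations in the explicit formula of Theorem \ref{Thm} line up so that "principal part $q^m$ with coefficient $1/s(m)$ on the scalar side" becomes exactly "principal part $\mathfrak e_{\gamma_0}q^{m/N}+\mathfrak e_{-\gamma_0}q^{m/N}$ on the vector side" with coefficient $1$ (the factor $s(m)$ and the divisor-counting interpretation are engineered precisely for this, but it must be verified), and likewise that the pairing in Borcherds' theorem for $\rho_{D^*}$, after restriction to invariants and transport by $\phi_{D^*}$, reproduces the coefficient $a^*(-m)$ of $f$ with the correct constant. A secondary subtlety is confirming that Borcherds' obstruction theorem applies verbatim to $\rho_D$ for our $D$ (it does, since his theorem is for arbitrary discriminant forms, and the weight condition $k+(2-k)=2$ together with $2k\equiv r\pmod 4$ is satisfied); I would state this as a direct citation. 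Once these normalizations are pinned down, the equivalence $f_m$ exists $\iff$ the obstruction vanishes $\iff$ $f^*_{-m}$ does not exist $\iff$ $-m\notin B^*$ is immediate.
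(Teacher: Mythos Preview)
Your approach has a genuine gap in the ``if'' direction. You claim that the existence of the reduced form $f_m$ corresponds, on the vector-valued side, to the existence of a form with principal part supported \emph{only} at $\mathfrak e_{\pm\gamma_0}q^{m/N}$, and hence that the obstruction reduces to the vanishing of the $q^{-m}$-coefficient of every $g\in\mathrm{M}^{\epsilon^*}(N,2-k,\chi')$. But ``reduced'' does not mean the principal part is a single term: by definition $f_m$ may have nonzero coefficients $a(n)$ at exponents $m<n\leq 0$, provided no form in $\mathrm{A}^\epsilon(N,k,\chi')$ has leading term $q^n$. Correspondingly, your asserted equivalence ``$-m\notin B^*$'' $\Leftrightarrow$ ``every holomorphic dual form has zero $q^{-m}$-coefficient'' is false; the hypothesis $-m\notin B^*$ only says no such form has $q^{-m}$ as its \emph{leading} term. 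For instance, if $\mathrm{M}^{\epsilon^*}$ is spanned by a single $f_0^*=\frac{1}{s(0)}+aq+\cdots$ with $a\neq 0$, then $1\notin B^*$ yet the $q^1$-coefficient does not vanish, and your argument would wrongly conclude that $f_{-1}$ fails to exist.

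The paper repairs exactly this point by allowing extra terms in the principal part. It states Borcherds' obstruction theorem directly on the scalar side (via the isomorphism) and, assuming $-m\notin B^*$, explicitly writes down a candidate
\[
P=\frac{1}{s(m)}q^m-\frac{1}{s(m)}\sum_i s(n_i)a_i(-m)\,q^{-n_i},\qquad B^*=\{n_i\},
\]
with the correction terms chosen so that the pairing of $P$ against each basis element $f_{n_j}^*$ vanishes; this works because $s(n_i)a_j(n_i)=\delta_{ij}$ makes the resulting linear system triangular. The form thus produced is reduced because its extra singular terms sit at exponents $-n_i$ with $n_i\in B^*$, and by the ``only if'' direction no element of $\mathrm{A}^\epsilon(N,k,\chi')$ can have leading term $q^{-n_i}$. (As a side remark, your uniqueness argument invokes $\mathrm{M}^\epsilon(N,k,\chi')=0$ via $k<1$, but no such restriction on $k$ is assumed; uniqueness of reduced forms follows directly from the definition.)
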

\begin{proof}
The obstruction theorem, Theorem 3.1, of \cite{borcherds1999gross} implies the following: let $P=\sum_{n\leq 0}a(n)q^n$ be a polynomial in $q^{-1}$ with $a(n)=0$ if $\chi_p(n)=-\epsilon_p$ for some $p\mid M$ or $n\equiv 2,-\epsilon_2\imod 4$. Then there exists $f\in \mathrm{A}^\epsilon(N,k,\chi')$ with $f=\sum_na(n)q^n$ if and only if
\[\sum_{n\leq 0}s(n)a(n)b(-n)=0\]
for each $g=\sum_n b(n)q^n\in \mathrm{M}^{\epsilon^*}(N,2-k,\chi')$.

If $-m\in B^*$, then by the obstruction of $f_{-m}^*$, $f_m$ does not exist. Conversely, if $-m\notin B^*$, assume $B^*=\{n_i\}$ and $f_{n_i}^*=\sum_na_i(n)q^n$, so $s(n_i)a_j(n_i)=\delta_{ij}$. Let
\[P=\frac{1}{s(m)}q^m-\frac{1}{s(m)}\sum_i s(n_i)a_i(-m)q^{-n_i},\]
and we see that $P$ satisfies the obstruction linear system, so the existence of $f_m$ follows.
\end{proof}

Now we prove the Zagier duality.

\begin{Thm}\label{R-Thm-1}
Let $m,d$ be integers and assume that both of the reduced modular forms
\begin{align*}
f_m&=\sum_n a_m(n)q^n \in \mathrm{A}^{\epsilon}(N, k, \chi')\\
f_d^*&=\sum_n a_d^*(n)q^n \in \mathrm{A}^{\epsilon^*}(N, 2-k, \chi')
\end{align*}
exist. Then $a_m(-d)=-a_d^*(-m)$.
\end{Thm}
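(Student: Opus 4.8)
The plan is to transport both forms to the vector-valued side, pair them into a scalar weakly holomorphic modular form of weight $2$ for $\textrm{SL}_2(\mathbb Z)$, and read the duality off from the vanishing of its constant term. First I would put $F=\psi_D(f_m)\in\mathcal A^{\textrm{inv}}(k,\rho_D)$, and, recalling that $D^*$ is again transitive with corresponding data $(N,\chi',\epsilon^*)$ and that its Weil representation is $\rho_{D^*}=\overline{\rho_D}$, put $F^*=\psi_{D^*}(f_d^*)\in\mathcal A^{\textrm{inv}}(2-k,\rho_{D^*})$. By Corollary \ref{C-Cor-2} their Fourier expansions are
\[
F_\gamma=\sum_{n\equiv Nq(\gamma)\imod N}s(n)\,a_m(n)\,q^{n/N},\qquad
F^*_\gamma=\sum_{n\equiv -Nq(\gamma)\imod N}s(n)\,a_d^*(n)\,q^{n/N}.
\]

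Next I would form the pairing $h:=\sum_{\gamma\in D}F_\gamma F^*_\gamma$. Since $\rho_D$ is unitary, the bilinear pairing $\mathbb C[D]\times\mathbb C[D^*]\to\mathbb C$ induced by $(\mathfrak e_\gamma,\mathfrak e_{\gamma'})\mapsto\delta_{\gamma\gamma'}$ is $\textrm{Mp}_2(\mathbb Z)$-invariant for the actions $\rho_D$ and $\rho_{D^*}$, and since the weights $k$ and $2-k$ add up to $2$ the metaplectic multiplier cancels; hence $h$ transforms with weight $2$ under $\textrm{SL}_2(\mathbb Z)$ and is holomorphic on $\mathbb H$, meromorphic at $\infty$. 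Then $h(\tau)\,d\tau$ is a meromorphic differential on $X(1)\cong\mathbb P^1$, holomorphic on $Y(1)$ and with its only pole at the cusp, so by the residue theorem $\operatorname{Res}_\infty(h\,d\tau)=0$; that is, the constant term $[q^0]\,h$ vanishes.

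It then remains to compute $[q^0]\,h$ and to extract the two relevant terms. In $F_\gamma F^*_\gamma$ the coefficient of $q^0$ is exactly $\sum_{n\equiv Nq(\gamma)\imod N}s(n)^2a_m(n)a_d^*(-n)$, and summing over $\gamma$ introduces the factor $\#\{\gamma\in D:q(\gamma)=n/N\}$. Using that every odd component of $D$ is $p^{\pm1}$ and $D_2=2^{+1}_t$ — so that $s(n)=2^{\omega((n,M))}$ and $\#\{\gamma\in D:q(\gamma)=n/N\}\cdot s(n)^2=2^{\omega(M)}s(n)$ — one obtains $[q^0]\,h=2^{\omega(M)}\sum_n s(n)\,a_m(n)\,a_d^*(-n)$, whence $\sum_n s(n)\,a_m(n)\,a_d^*(-n)=0$. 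A term with $n<m$ vanishes because $f_m=\frac1{s(m)}q^m+O(q^{m+1})$, and a term with $n>-d$ vanishes because $f_d^*=\frac1{s(d)}q^d+O(q^{d+1})$. For $m<n<-d$, if $a_m(n)\neq 0$ then reducedness of $f_m$ forbids any $g\in\mathrm A^\epsilon(N,k,\chi')$ with leading term $q^n$; by the existence dichotomy of the preceding Proposition (used in weight $k$ or $2-k$ according to the sign of the exponent, and via the obstruction theorem of Borcherds \cite{borcherds1999gross} for the borderline exponent $0$) this produces a form in $\mathrm A^{\epsilon^*}(N,2-k,\chi')$ with leading term $q^{-n}$, contradicting reducedness of $f_d^*$ together with $a_d^*(-n)\neq 0$. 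Hence only $n=m$ and $n=-d$ survive, and plugging in $a_m(m)=1/s(m)$, $a_d^*(d)=1/s(d)$ and $s(-d)=s(d)$ gives $a_d^*(-m)+a_m(-d)=0$.

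The main obstacle is this last step: repackaging the reducedness conditions and the existence criterion into the clean complementarity statement that $\ell$ is the leading exponent of a reduced form in $\mathrm A^\epsilon(N,k,\chi')$ if and only if $-\ell$ is not the leading exponent of one in $\mathrm A^{\epsilon^*}(N,2-k,\chi')$, while handling correctly the exponents that are nonnegative (where the pertinent reduced forms are holomorphic, so the statement concerns the spaces $\mathrm M^\epsilon$ and $\mathrm M^{\epsilon^*}$) and the borderline exponent $0$ (dealt with directly by the obstruction theorem applied to the constant principal part). The first three steps are formal once Theorem \ref{Thm} and Corollary \ref{C-Cor-2} are available, and the constant-term computation is routine bookkeeping with the coefficient formula and the count of norm-$n/N$ vectors in $D$.
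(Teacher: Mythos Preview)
Your proposal is correct and follows essentially the same route as the paper: lift both forms via $\psi$, pair the components to obtain a weakly holomorphic weight-$2$ form for $\textrm{SL}_2(\mathbb Z)$, use the residue theorem to kill the constant term, and then eliminate the intermediate terms by the existence dichotomy coming from Borcherds' obstruction theorem. The paper organizes the last step by splitting the range $m<n<-d$ at $n=0$ and invoking the set $B^*$ directly, whereas you phrase it as a single complementarity statement applied in the appropriate weight according to the sign of $n$; these are the same argument.
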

\begin{proof} The statement is trivial when $m>0$ and $d>0$ or $m=0$ and $d>0$ or $m>0$ and $d=0$, so by symmetry we may assume that $m=d=0$ or $m<0$.

Let $F=\psi(f_m)$ and $G=\psi(f_d^*)$. It is clear that $H=\sum_\gamma F_\gamma G_\gamma$ is a weakly holomorphic modular form of weight $2$ for $\textrm{SL}_2(\mathbb Z)$. Therefore, the sum of the residues of the meromorphic $1$-form $H(\tau)d\tau$ on the compact Riemann surface $X(1)$ vanishes. Since $F$ and $G$ are holomorphic on $\mathbb H$ and the residue of $H(\tau)d\tau$ at $\infty$ is given by $\frac{s(0)}{2\pi i}$ times $\sum_{n\in\mathbb Z}s(n)a_m(n)a_d^*(-n)$. If $(m,d)=(0,0)$, we have a contradiction, so $f_0$ and $f_0^*$ cannot both exist. We then assume $m<0$, we have
\[\sum_{n\in\mathbb Z}s(n)a_m(n)a_d^*(-n)=a_m(-d)+a_d^*(-m)+\sum_{m<n<-d}s(n)a_m(n)a_d^*(-n)=0.\]
So we only have to prove that $\sum_{m<n<-d}s(n)a_m(n)a_d^*(-n)=0$. If $m<n\leq 0$, then $a_m(n)=0$ if $-n\in B^*$ and $a_d^*(-n)=0$ if $-n\notin B^*$. Similarly, if $0<n<-d$ and $a_d^*(-n)\neq 0$, then $-n\notin B^*$ and $f_n$ exists, so $a_m(n)=0$. We are done.
\end{proof}

In the rest of this section, we write down explicitly the Borcherds lift in the case of $\mathrm{O}(2,1)$. We consider the following even lattice
\[L=\left\{\begin{pmatrix}
a & b/M\\
c & -a
\end{pmatrix}\colon a,b,c\in\mathbb Z\right\},\]
with $q(\alpha)=-M\textrm{det}(\alpha)$ and $(\alpha,\beta)=M\textrm{tr}(\alpha\beta)$. Then the discriminant form $D=L^\vee/L\simeq \mathbb Z/2M\mathbb Z$ with $D=\prod_{p\mid 2M}D_p$ given by
\[D_2=2^{+1}_t, t=\left(\frac{-1}{M}\right),\quad D_p=p^{\delta_p}, \delta_p=\left(\frac{2M/p}{p}\right), p\mid M.\]
It follows that for such $D$, $\epsilon_p=+1$ for all $p\mid M$ and $\chi'=\chi\left(\frac N\cdot\right)=1$, so we shall simply denote
$\mathrm{A}^+(N,k,1)$ for $\mathrm{A}^\epsilon(N,k,\chi')$. The dual $D^*$ of $D$ then gives $\epsilon^*$ with $\epsilon^*_p=\chi_p(-1)$.

Let us recall Zagier's non-holomorphic modular form
\[\mathbf G(\tau)=\sum_{n=0}^{\infty}H(n)q^n+\frac{1}{16\pi}\sum_{n\in\mathbb Z}q^{-n^2}\int_y^\infty e^{-4\pi un^2}u^{-\frac 32}du\]
for $\Gamma_0(4)$ of weight $\frac 32$ (see \cite{zagier1975nombres}). Here $H(n)$ denotes the Hurwitz class number of $n$, whose generating function, that is the the holomorphic part of $\mathbf G(\tau)$, will be denoted by $G(\tau)=\sum_{n=0}^{\infty}H(n)q^n$. Denote $\mathbf G^{*}$ the $\epsilon^*$-component of $\mathbf G$ and denote the its holomorphic part by $G^{*}(\tau)=\sum_{n=0}^{\infty}H^{*}(n)q^n$.
The map $\psi$ in the preceding section can be extended to non-holomorphic modular forms. In particular, we can consider $\mathbf G(\tau)$ as in $\mathrm{A}(N,\frac 32,1)$, so $\psi_{D^*}(\mathbf G)$ is a non-holomorphic modular form of weight $\frac 32$ and type $\rho_D$ by Corollary \ref{C-Cor-2}. We omit the dependence on $D$ and denote it by $\mathbf G_N(\tau)=\psi_{D^*}(\mathbf G)$. By Corollary \ref{A-Cor-1} and that $H(n)\neq 0$ for any positive $n\equiv 0,3\imod 4$, we see that $\mathbf G_N(\tau)$ is non-zero. We denote the holomorphic part of $\mathbf G_N(\tau)$ by $G_N(\tau)$.

\begin{Thm}\label{Thm2}[Borcherds] Let $D$ and $(N,\chi\left(\frac N\cdot\right),\epsilon)$ be as above. Assume $f=\sum_nc(n)q^n\in \mathrm{A}^+(N,k,1)$ with $s(n)c(n)\in\mathbb Z$ for all $n\leq 0$. Then there exists a meromorphic modular form $\Psi(f)$ of weight $s(0)c(0)$ for $\Gamma_0(M)$ (with some finite multiplier system) such that

(1) $\Psi(f)$ has an infinite product expression:
\[\Psi(f)(\tau)=q^{\rho}\prod_{n=1}^{\infty}(1-q^n)^{s(n^2)c(n^2)},\]
where $\rho=-\sum_{n\in\mathbb Z}s(n)c(-n)H^{*}(n)$.

(2) The zeros and poles of $\Psi(f)$ on $\mathbb H$ occur at CM points $\tau$ of discriminant $D$ with order
\[\sum_{n=1}^\infty s(Dn^2)c(Dn^2).\]
\end{Thm}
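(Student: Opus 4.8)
The plan is to deduce the theorem from Borcherds' automorphic product theorem (Theorem 13.3 of \cite{borcherds1998automorphic}) applied to the lattice $L$, using the isomorphism $\psi_D$ of Theorem \ref{Thm} to turn the scalar-valued $f$ into a vector-valued input; as the introduction signals, every ingredient except the leading exponent $\rho$ is then essentially routine. First I would put $F=\psi_D(f)=\sum_\gamma F_\gamma\mathfrak e_\gamma$. Since $\chi_p\neq 1$ for every $p\mid M$, Corollary \ref{C-Cor-2} shows that $F\in\mathcal A^{\textrm{inv}}(k,\rho_D)$ (here $k=1/2$, the weight for which Borcherds' lift is available for a lattice of signature $(2,1)$) and that the $\mathfrak e_\gamma$-component of $F$ has $q^{n/N}$-coefficient $s(n)c(n)$ whenever $n\equiv Nq(\gamma)\imod N$, with $s(n)$ the number of positive divisors of $(n,M)$. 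In particular the principal part of $F$ carries the coefficients $\{s(n)c(n):n\le 0\}$, integral exactly under the hypothesis of the theorem, which is the integrality condition demanded by Borcherds' theorem. Since $L$ has $L^\vee/L=D$, signature $(2,1)$, and a primitive isotropic vector, applying Theorem 13.3 of \cite{borcherds1998automorphic} to $F$ yields a meromorphic automorphic product $\Psi$ on the symmetric domain of $\mathrm{O}(2,1)$, with singularities along rational quadratic divisors.

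Next I would transport $\Psi$ to $\mathbb H$ via the identification of that symmetric domain with $\mathbb H$ coming from the conjugation action of $\mathrm{GL}_2^+(\mathbb R)$ on $L\otimes\mathbb R$ (a $\mathrm{Sym}^2$-type embedding $\mathrm{SL}_2\hookrightarrow\mathrm{O}(2,1)$). The scaling by $1/M$ in the off-diagonal entry of $L$ makes the pertinent discriminant subgroup of $\mathrm{O}(L)$ correspond to the image of $\Gamma_0(M)$, so $\Psi$ descends to a meromorphic modular form on $\mathbb H$ for $\Gamma_0(M)$ with a finite multiplier system; and Borcherds' orthogonal weight, namely half the constant term $s(0)c(0)$ of the $\mathfrak e_0$-component of $F$, doubles under this $\mathrm{Sym}^2$-identification to the elliptic weight $s(0)c(0)$. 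The product in (1) and the divisor in (2) then come from the usual $\mathrm{O}(2,1)$ bookkeeping: with a primitive isotropic $z\in L$ fixed, the positive-norm vectors occurring in Borcherds' product group, by their pairing with $z$, into the factors $\prod_{n\ge 1}(1-q^n)^{s(n^2)c(n^2)}$, the exponent being the coefficient of $F$ read off from the formula above, while the negative-norm vectors, collected into $\mathrm{O}(L)$-orbits, produce the divisor on $\mathbb H$ supported on the CM points of discriminant $D$ with multiplicity $\sum_{n\ge 1}s(Dn^2)c(Dn^2)$.

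It remains to pin down the Weyl vector $\rho$, which is the one genuinely nontrivial step. Borcherds' formula for the leading exponent — the Weyl-vector part of Theorem 13.3 of \cite{borcherds1998automorphic}, which rests on Lemma 9.5 and Corollary 9.6 of \cite{borcherds1998automorphic} — expresses $\rho$ as a finite linear combination of the principal-part coefficients of $F$ paired against the Fourier coefficients of a weight-$3/2$, non-holomorphic, Eisenstein-type vector-valued modular form. I would supply this form as $\mathbf G_N=\psi_{D^*}(\mathbf G)$: the extension of $\psi$ to real-analytic forms (Corollary \ref{C-Cor-2}) applies to Zagier's $\mathbf G$ — only its $\epsilon^*$-component $\mathbf G^*$ contributing, by Corollary \ref{C-Cor-1} — which gives that the holomorphic part of $\mathbf G_N$ has $q^{n/N}$-coefficient $s(n)H^*(n)$. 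One then checks that $\mathbf G_N$ is precisely the weight-$3/2$ form occurring in Borcherds' Section 9 — in particular that the non-holomorphic completion it inherits from $\mathbf G$ is the one for which Corollary 9.6 evaluates the relevant theta integral — so that the formula applies and, after the normalization bookkeeping, yields $\rho=-\sum_{n\in\mathbb Z}s(n)c(-n)H^*(n)$, the sum being finite since $f$ has only finitely many negative Fourier coefficients and $H^*(n)=0$ for $n<0$.

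The hard part is exactly this last identification — the normalization matching: aligning $\mathbf G_N$ with the weight-$3/2$ input demanded in Lemma 9.5 and Corollary 9.6 of \cite{borcherds1998automorphic}, verifying that its non-holomorphic part is the term for which Borcherds' integral is computed, and tracking the passage to the dual form $D^*$ together with the $\epsilon^*$-condition so that it is $H^*(n)$, and not $H(n)$, that enters $\rho$ (and that the scalars $s(0)$, $|D|$ and the like cancel out as in the stated formula). A secondary, essentially mechanical point is to verify that $\mathrm{O}(L)$ gives exactly $\Gamma_0(M)$ and that the multiplier system of $\Psi(f)$ has finite order — a direct computation with $L$, complicated only mildly by the $1/M$ in its off-diagonal entry.
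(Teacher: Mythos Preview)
Your approach is essentially the paper's: apply $\psi_D$ to $f$, feed the result into Theorem~13.3 of \cite{borcherds1998automorphic} for the lattice $L$, read off the product, the divisor, and the $\Gamma_0(M)$-level via the conjugation embedding, and compute the Weyl vector by identifying $\mathbf G_N=\psi_{D^*}(\mathbf G)$ with Borcherds' weight-$3/2$ input from Section~9. The one place where you stop short is exactly the step you flag as hard --- how to certify that $\mathbf G_N$ is \emph{the} form to which Lemma~9.5/Corollary~9.6 of \cite{borcherds1998automorphic} apply --- and the paper supplies a concrete device you do not: the properties in Lemma~9.5 do not pin the form down uniquely, but its \emph{proof} does, since the same lifting map that produces Borcherds' weight-$3/2$ form also sends the Jacobi $\theta$ to the vector-valued theta $\Theta_{2M}$; one then checks $\Theta_{2M}=s(0)^{-1}\psi(\theta)$ by comparing $\mathfrak e_0$-components and invoking Lemma~\ref{T-Lem-4}, which forces the identification for $\mathbf G$ as well and yields $\rho$ as the constant term of $-s(0)^{-1}\langle F,\overline{G_N}\rangle$.
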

\begin{proof}
In Theorem 13.3 of \cite{borcherds1998automorphic},
\[z=\begin{pmatrix}
0&0\\1&0
\end{pmatrix},\quad z'=\begin{pmatrix}
0&1/M\\0&0
\end{pmatrix},\quad K\simeq \mathbb Z \text{ with } q(n)=n^2.\]
Let $F=\psi(f)=\sum_{\gamma}F_\gamma\mathfrak e_\gamma$ and by Corollary \ref{C-Cor-2},
\[F_\gamma(\tau)=\sum_{n\equiv Nq(\gamma)\imod N\mathbb Z}s(n)c(n)q^{\frac{n}{N}}.\]
By Theorem 13.3 of \cite{borcherds1998automorphic}, the Borcherds lift of $F$ is equal to
\[\Psi(f):=\Psi(F)(\tau)=q^{\rho}\prod_{n=1}^{\infty}(1-q^n)^{s(n^2)c(n^2)},\]
where $\rho$ comes from the Weyl vector.

For the formula of $\rho$, we first note that $\mathbf G_N$ satisfies Lemma 9.5 of \cite{borcherds1998automorphic}. We caution here that such properties of $\mathbf G_N$ do not determine it uniquely and it is the proof of that lemma that describes it uniquely; that is, the same map sends $\theta$ to $\Theta_{2M}$ and $\mathbf G$ to $\mathbf G_N$. By comparing the $\mathfrak e_0$-components and Lemma \ref{T-Lem-4}, we see that $\Theta_{2M}=s(0)^{-1}\psi(\theta)$. So by Corollary 9.6 of \cite{borcherds1998automorphic}, $\rho$ is equal to the constant term of $-s(0)^{-1}\langle F,\overline{G_N}\rangle$.

That $\Psi(f)$ is a modular form for $\Gamma_0(M)$ follows from $\Gamma_0(M)\subset \mathrm{O}^+(L)$, where for each $\alpha\in\Gamma_0(M)$, $\beta\mapsto \alpha\beta\alpha^{-1}$ gives the embedding.

By Theorem 13.3 of \cite{borcherds1998automorphic}, the divisor of $\Psi(f)$ on $\mathbb H$ is given by
\[\sum_{\lambda\in L'/\{\pm 1\}, q(\lambda)<0}s(Nq(\lambda))c(Nq(\lambda))T_\lambda,\]
where for $\lambda=\begin{pmatrix}
a/2M&b/M\\c&-a/2M
\end{pmatrix}$, $T_\lambda$ is the unique solution on $\mathbb H$ of the equation \[Mb\tau^2-a\tau-c=0.\] Therefore, $T_\lambda=\tau$ is a CM point and the formula follows. We are done with the proof.
\end{proof}

For computational purpose, we include the following proposition.

\begin{Prop}
We have $\mathbf G-\mathbf G^{*}\in \mathrm{M}(N,\frac 32,1)$.
\end{Prop}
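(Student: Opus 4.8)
The plan is to compare shadows. Recall the conjugate-linear operator $\xi_{3/2}(g)=2iy^{3/2}\overline{\partial_{\bar\tau}g}$, which vanishes on a real-analytic function precisely when that function is holomorphic on $\mathbb H$. Both $\mathbf G$ and $\mathbf G^{*}$ lie in $\mathrm A(N,\frac32,1)$, so $\mathbf G-\mathbf G^{*}$ is real-analytic, modular of weight $\frac32$ and trivial character for $\Gamma_0(N)$. Moreover $\mathbf G$ has moderate growth and no principal part at every cusp (its holomorphic part at $\infty$ is $\sum_{n\ge 0}H(n)q^{n}$ and the non-holomorphic part decays), and the operators $U(m),W(m),\tau_N$ and $Z(p)$ are finite sums of $\mathrm{GL}_2^{+}(\mathbb Q)$-translates, so by $\mathbf G^{*}=2^{-\omega(M)}\mathbf G|\prod_{p\mid M}(1+\epsilon^{*}_pZ(p))$ (Corollary~\ref{A-Cor-1}) the form $\mathbf G-\mathbf G^{*}$ also has moderate growth, hence no principal part, at every cusp. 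It therefore suffices to prove $\xi_{3/2}(\mathbf G-\mathbf G^{*})=0$: then $\mathbf G-\mathbf G^{*}$ is a holomorphic modular form of weight $\frac32$, trivial character and level $N$ with no principal part, i.e. it lies in $\mathrm M(N,\frac32,1)$.

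Differentiating the integral in the non-holomorphic part of $\mathbf G$ gives the classical identity $\xi_{3/2}(\mathbf G)=-\frac{1}{16\pi}\,\theta$ with $\theta(\tau)=\sum_{n\in\mathbb Z}q^{n^{2}}$. Next I record how $\xi_{3/2}$ moves through $Z(p)$. Since the powers of $p$ occurring in $U(p)$ and in $Y(p)$ cancel, $f|Z(p)=\varepsilon_pp^{-1/2}\sum_{j\imod p}f|\widetilde{\delta_p^{-1}T^{j}}W(p)$, a scalar $\varepsilon_pp^{-1/2}$ (independent of the weight) times a sum of weight-$\frac32$ slash operators; a short computation shows that $\xi$ intertwines the weight-$\frac32$ and weight-$\frac12$ slash operators attached to any metaplectic element $(A,tj(A,\tau))$ with $t^{4}=1$, and the element $\widetilde{\delta_p^{-1}T^{j}}W(p)$ has $t=\pm\nu(\gamma_p)$, a fourth root of unity. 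Combining this with the conjugate-linearity of $\xi$ gives
\[\xi_{3/2}\!\bigl(f|Z(p)\bigr)=\frac{\overline{\varepsilon_p}}{\varepsilon_p}\,(\xi_{3/2}f)\big|Z(p)=\chi_p(-1)\,(\xi_{3/2}f)\big|Z(p),\]
since $\overline{\varepsilon_p}/\varepsilon_p=\varepsilon_p^{-2}=\bigl(\tfrac{-1}{p}\bigr)=\chi_p(-1)$. As $\epsilon_p=+1$ for the lattice $L$ we have $\epsilon^{*}_p=\chi_p(-1)$, so applying $\xi_{3/2}$ to the projector $\tfrac12(1+\epsilon^{*}_pZ(p))$ produces $\tfrac12\bigl(1+\chi_p(-1)\epsilon^{*}_pZ(p)\bigr)=\tfrac12(1+Z(p))$; iterating over $p\mid M$,
\[\xi_{3/2}(\mathbf G^{*})=-\frac{1}{16\pi}\,\theta\Big|\Bigl(2^{-\omega(M)}\prod_{p\mid M}(1+Z(p))\Bigr).\]

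It remains to show that $\theta$ is fixed by this projector, i.e. $Z(p)\theta=\theta$ for every $p\mid M$, equivalently $\theta\in\mathrm M^{+}(N,\frac12,1)$. This is the one point requiring input, and it is already available: in the proof of Theorem~\ref{Thm2} (via Lemma~9.5 of~\cite{borcherds1998automorphic}) the scalar theta series is identified as $\theta=s(0)\,\phi_D(\Theta_{2M})$ with $\Theta_{2M}\in\mathcal M^{\text{inv}}(\frac12,\rho_D)$; by Corollary~\ref{C-Cor-2.5} applied at weight $\frac12$ this gives $\theta\in\mathrm M^{\epsilon(D)}(N,\frac12,1)=\mathrm M^{+}(N,\frac12,1)$, so all $Z(p)$ fix $\theta$. (Alternatively one verifies $Z(p)\theta=\theta$ directly: from $\theta|\gamma^{*}=\theta$ for $\gamma\in\Gamma_0(4)$ one gets $\theta|U(p)=\theta(p\,\cdot)$ and $\theta|W(p)=p^{1/4}\theta(p\,\cdot)$, and a reciprocity computation with Shimura's multiplier $\nu$ shows the leftover root of unity cancels $\varepsilon_pp^{-1/2}\cdot p^{1/2}$ to $+1$.) Granting this, $\xi_{3/2}(\mathbf G^{*})=-\frac{1}{16\pi}\theta=\xi_{3/2}(\mathbf G)$, hence $\xi_{3/2}(\mathbf G-\mathbf G^{*})=0$ and the proposition follows. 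The main obstacle is exactly the identification $\theta\in\mathrm M^{+}(N,\frac12,1)$; the rest is the conjugate-linearity bookkeeping for $\xi$ and the routine check that the Atkin--Lehner type operators create no poles at cusps.
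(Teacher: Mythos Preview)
Your argument is correct and rests on the same idea as the paper's: both use the shadow map $\xi_{3/2}$ (equivalently, the isomorphism $\mathrm{H}^{\geq 0}/\mathrm{A}(N,\tfrac32,1)\simeq\mathrm{M}(N,\tfrac12,1)$) together with the fact that $\mathrm{M}(N,\tfrac12,1)=\mathbb{C}\theta$. The paper's proof is terse and leaves implicit exactly the step you carry out in detail, namely why $\xi_{3/2}(\mathbf G)=\xi_{3/2}(\mathbf G^{*})$; your computation of the intertwining $\xi_{3/2}\bigl(f|Z(p)\bigr)=\chi_p(-1)\,(\xi_{3/2}f)|Z(p)$ via the conjugate-linearity of $\xi$ and $\overline{\varepsilon_p}/\varepsilon_p=\bigl(\tfrac{-1}{p}\bigr)$ is the right way to make this explicit.

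One simplification: to show $\theta\in\mathrm{M}^{+}(N,\tfrac12,1)$ you need not invoke Theorem~\ref{Thm2} or a reciprocity calculation. Since $\theta=\sum_{n}q^{n^{2}}$ is supported on perfect squares, every nonzero Fourier index $n$ has $\bigl(\tfrac{n}{p}\bigr)\in\{0,+1\}$, so Proposition~\ref{A-Prop-2}(2) (applied with trivial nebentypus, so its hypothesis $\chi_p=1$ holds for all $p\mid M$) immediately gives $Z(p)\theta=\theta$. This is in effect what the paper is using when it cites Serre--Stark: the unique weight-$\tfrac12$ form $\theta$ automatically sits in the all-$+$ eigenspace, forcing the shadows of the non-$\epsilon^{*}$ components of $\mathbf G$ to vanish.
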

\begin{proof}
By the structure theory of weak Maass forms (see \cite{bruinier2003geometric}), the space $\mathrm{H}^{\geq 0}/\mathrm{A}(N,\frac 32,1)\simeq \mathrm{M}(N,\frac 12,1)$, where $\mathrm{H}^{\geq 0}$ denote the subspace of weak Maass forms $f=f^++f^-$ for $\Gamma_0(N)$ of weight $\frac 32$ with $f^-$ of polynomial growth at $\infty$. By Theorem A in \cite{Serre1977Modular}, $\mathrm{M}(N,\frac 12,1)$ has dimension $1$ and is generated by the Jacobi theta function $\theta(\tau)=\sum_nq^{n^2}$. Since the holomorphic part of $\mathbf G$ is holomorphic at cusps, so is that of $\mathbf G^{*}$. The statement follows.
\end{proof}

\begin{Rmk}
For simplicity, in the above treatment, we only considered the case $\chi_p\neq 1$ for each $p\mid M$. To make full use of the isomorphism in Theorem \ref{Thm}, we can consider the following lattice
\[L=\left\{\begin{pmatrix}
a & b/M_1\\
cM_2 & -a
\end{pmatrix}\colon a,b,c\in\mathbb Z\right\},\]
with $q(\alpha)=-M_1\textrm{det}(\alpha)$ and $(\alpha,\beta)=M_1\textrm{tr}(\alpha\beta)$. Now for $p\mid M_2$, $\chi_p=1$. Then case when $M_1=1$ for the Shimura lift is considered in \cite{li2015shimura}.
\end{Rmk}

\section{Some Examples}

The simplest case $N=4$ has been explored extensively by many people. The Zagier duality is worked out by Zagier in order to prove Borcherds' theorem. Jacobi theta function $\theta=1+\sum_{n=1}^{\infty}q^{n^2}\in \mathrm{M}^+(4,\frac{1}{2},1)$ has Borcherds lift $\eta^2(\tau)$ or that of $12\theta$ is $\Delta(\tau)=\eta^{24}(\tau)$.

Let us consider the case $N=12$ and provide some new examples. Using MAGMA, we record the following reduced modular forms for $\mathrm{A}^+(12,\frac 12,1)$:
\begin{align*}
f_0&=\frac 12 + q + q^4 + q^9 +O(q^{16})\\
f_{-3}&=\frac 12 q^{-3} - 7q + 20q^4 - 39q^9 + 84q^{12} - 189q^{13}  + O(q^{16})\\
f_{-8}&=q^{-8} - 34q - 188q^4 + 2430q^9 + 8262q^{12} - 11968q^{13}  +O(q^{16})\\
f_{-11}&=q^{-11} + 22q - 552q^4 - 11178q^9 + 48600q^{12} + 76175q^{13} +O(q^{16}).
\end{align*}
Similarly, for $\mathrm{A}^{\epsilon^*}(12,\frac 32,1)$:
\begin{align*}
f_{-1}^*&=q^{-1}  -1 + 7q^3 + 34q^8 - 22q^{11} - 26q^{12}+O(q^{15})\\
f_{-4}^*&=q^{-4}  -1 - 20q^3 + 188q^8 + 552q^{11} - 701q^{12} + O(q^{15})\\
f_{-9}^*&=\frac 12 q^{-9}  -1 + 39q^3 - 2430q^8 + 11178q^{11} - 8826q^{12}+ O(q^{15})\\
f_{-12}^*&=\frac 12 q^{-12} - 84q^3 - 8262q^8 - 48600q^{11} - 41412q^{12} + O(q^{15}).\\
\end{align*}
To continue the lists, we simply multiply $j(12\tau)$ and subtract the existing forms. The Zagier duality is clear from these two lists.

To see the Borcherds products, we have a basis for $\mathrm{M}(12,\frac 32,1)$:
\begin{align*}
g_0&=1 + 2q^3 + 6q^4 + 12q^7 + O(q^{12})\\
g_1&=q + q^3 + 2q^4 + 2q^6 + 2q^7 + q^9 + 4q^{10} + O(q^{12})\\
g_2&=q^2 - q^4 + 2q^5 + q^6 - 2q^7 + q^8 + 2q^9 + 2q^{11} + O(q^{12}).
\end{align*}
Since
\[G=-\frac{1}{12}+\frac 13q^3+\frac 12q^4+q^7+q^8+q^{11}+ O(q^{12}),\]
we must have
\[G^{\epsilon^*}=G-\frac{1}{12}g_0=-\frac{1}{6}+\frac 16q^3+q^8+q^{11}+ O(q^{12}).\]
For reduced modular forms in $\mathrm{A}^+(12,\frac 12,1)$, it is clear that $f_0=\frac 12\theta$. It is easy to see that $\rho=\frac 16$ and
\[\Psi(f_0)=q^{\frac 16}\prod_{n=1}^{\infty}(1-q^n)^{s(n^2)}=q^{\frac 16}\prod_{n=1}^{\infty}(1-q^n)(1-q^{3n})=\eta(\tau)\eta(3\tau)\]
is a modular form of weight $1$ for $\Gamma_0(3)$ of finite multiplier system and holomorphic and non-vanishing on $\mathbb H$. Similarly,
\[\Psi(f_{-3})=q^{-\frac 16}\prod_{n=1}^{\infty}(1-q^n)^{s(n^2)c(n^2)}=q^{-\frac 16}(1-q)^{-7}(1-q^{2})^{20}(1-q^{3})^{-78}(1-q^{4})^{344}\cdots\]
is a weakly holomorphic modular form of weight $0$ for $\Gamma_0(3)$ which has simple poles at cusps and simple zeros at CM-points of discriminant $-3$. More explicitly, let
\[E_1=1 + 6q + 6q^3 + 6q^4 + 12q^7 + 6q^9 + O(q^{12})\]
be the unique modular form of weight $1$, level $3$ and character $\left(\frac\cdot 3\right)$ with leading coefficient $1$, then
\[\Psi(f_{-3})=E_1(\tau)\eta(\tau)^{-1}\eta(3\tau)^{-1}.\] In other words, $\Psi(f_{-3}+f_{0})=E_1$. One can also work out the Borcherds product for more reduced modular forms in the same way.

\vskip 0.5 cm

\addcontentsline{toc}{chapter}{Bibliography}
\bibliographystyle{amsplain}
\bibliography{C:/Users/Administrator/OneDrive/Math/paper}

\end{document}